\pgfplotsset{soldot/.style={color=blue,only marks,mark=*}} \pgfplotsset{holdot/.style={color=blue,fill=white,only marks,mark=*}}
\def\tl{\dashv}
\def\cA{\mathcal A}
\def\cB{\mathcal B}
\def\cC{\mathcal C}
\def\cE{\mathcal E}
\def\cF{\mathcal F}
\def\cG{\mathcal G}
\def\cI{\mathcal I}
\def\cJ{\mathcal J}
\def\cK{\mathcal K}
\def\cL{\mathcal L}
\def\cN{\mathcal N}
\def\cP{\mathcal P}
\def\cQ{\mathcal Q}
\def\cS{\mathcal S}
\def\cT{\mathcal T}
\def\cU{\mathcal U}
\def\cX{\mathcal X}
\newcommand{\xra}[1]{\xrightarrow{#1}}
\newcommand{\vp}[1]{\vspace{#1in}}
\newcommand{\hp}[1]{\hspace{#1in}}
\def\sdarrow{\rightleftarrows}
\numberwithin{equation}{section}
\theoremstyle{plain} 
\newtheorem{thm}[equation]{Theorem}
\newtheorem*{introthm*}{Theorem}
\newtheorem{cor}[equation]{Corollary}
\newtheorem{lem}[equation]{Lemma}
\newtheorem{prop}[equation]{Proposition}
\newtheorem{lem-defn}[equation]{Lemma-Definition}
\theoremstyle{definition}
\newtheorem{defn}[equation]{Definition}
\newtheorem{ex}[equation]{Example}
\theoremstyle{remark}
\newtheorem{rem}[equation]{Remark}
\begin{document}

\title{Some topological aspects of a general spectra construction of Matsui and Takahashi}

\author{Xuan Yu}

\maketitle

\begin{abstract}

Matsui and Takahashi introduce a general spectra construction for triangulated categories in \cite{matsui2020construction}, which is later used to establish Matsui's theory of triangular geometry. In this paper, we study several topological aspects of this general construction and give criteria for soberness and spectralness of the spectra. Furthermore, we discuss and generalize the immersion phenomenon for Noetherian schemes as appeared in \cite{matsui2021prime}. The last section illustrates that similar immersions also appear if the underlying category has a well-defined finite group action. We work in the extriangulated context to incorporate similar ideas from the triangulated and abelian contexts.

\end{abstract}

\smallskip

\hp{0.25} \footnotesize\emph{Keywords: spectrum, support, immersion, group action}

\section{Introduction}

Subcategory classification problems are critical to understanding the categorical structure of a given category and closely tied to the ``geometry" of the underlying category. The first major achievement in this direction is Gabriel's reconstruction of a Noetherian scheme from its abelian category of quasi-coherent sheaves \cite{gabriel1962thesis}. For triangulated categories, there are the celebrated Bondal-Orlov reconstruction Theorem \cite{bondal2001reconstruction} and theory of tensor triangular geometry \cite{balmer2005spectrum}. These works require some extra assumptions of the underlying schemes or triangulated categories, and a recent breakthrough is Matsui's theory of triangular geometry \cite{hirano2022prime, ito2023gluing, ito2025polar, ito2024new, matsui2021prime, matsui2023triangular}, where the existence of a tensor product of the underlying triangulated category is loosened.

To begin with, an idea first appeared and investigated systematically in \cite{matsui2020construction} is to choose an appropriate class of thick subcategories (the ``prime" subcategories) $C$ and view them as the ``points" of the ``spectrum" $Esp_C(\cE)$ of the underlying category $\cE$. A key observation of Matsui is that if $C$ is chosen to be the collection of the so-called \textbf{prime thick subcategories} (Definition 2.8 \cite{matsui2023triangular}), then the corresponding spectrum do carry lots of important geometric information of the category. For example, there is an immersion \[X\hookrightarrow MSpc(Perf(X))\] from a Noetherian scheme $X$ to the \textbf{Matsui spectrum} of its category of perfect complexes \cite{matsui2021prime}, which is later shown to be an open immersion of ringed spaces in \cite{matsui2023triangular}. Moreover, Ito and Matsui show in \cite{ito2024new} that one can reproduce the Ballard-Bondal-Orlov reconstruction Theorem use the theory of triangular geometry. For more geometric results, please see \cite{hirano2022prime, ito2023gluing, ito2025polar}.

The purpose of the current paper is to discuss some topological aspects of the above spectra construction. First, to incorporate more types of subcategories and to get an actual topological space (with least requirements on the chosen subcategories) from the construction, we generalize Matsui and Takahashi's construction a little bit and state some general results about the corresponding support theory. In particular, the underlying categories in the current paper are extriangulated categories because we want to unify spectra constructions for both abelian and triangulated categories. We give criteria for $Esp_C(\cE)$ to be sober (Theorem \ref{sober}) and spectral (Theorem \ref{noethspectral} and Corollary \ref{spectral}), respectively. These properties are crucial to the spectra and has many important consequences. Our criteria are motivated by the work of Rowe \cite{rowe2024noncommutative} and Finocchiaro \cite{finocchiaro2014spectral} respectively, and they are sometimes quite convenient to check. For example, one can show easily that the Serre spectrum of an abelian category (Example \ref{serre}), Balmer spectrum of a tensor triangulated category (Example \ref{ex}) and the noncommutative Balmer spectrum of a monoidal triangulated category satisfying the tensor product property (Theorem \ref{ncspectral}) are spectral. The spectral criterion is also used in our generalization of the above Matsui's immersion result for Noetherian schemes in Section \ref{s5}.

Next, we consider the immersion and (topological) reconstruction problem for the spectra construction. Our discussion is based on a detailed study of the reconstruction map in tensor triangular geometry, i.e., the universal map $X\xra{}BSpc(\cT)$ that sends $x\in X$ to $\{T\in\cT: x\notin\sigma(T)\}$ from any support data $(X,\sigma)$ to the Balmer spectrum and support of a tensor triangulated category $\cT$. This map is also the one used in \cite{matsui2021prime} for the immersion/reconstruction for Matsui spectrum of triangulated categories. In fact, for any support $(X,\sigma)$ (Definition \ref{defsupp}) of a given extriangulated category $\cE$, the image $\{E\in\cE: x\notin\sigma(E)\}$ has two different descriptions, depending on the open/closedness of the support $\sigma(E)$, for any object $E$ in $\cE$. We discuss closed support in Section \ref{close}, open support in Section \ref{s5} and generalize Matsui's immersion/reconstruction Theorems. Following is a summary of our results: let $(X,\sigma)$ be a support on $\cE$, there are natural inclusion preserving maps \[\{\text{certain type subcategories of }\cE\}\overset{f_{\sigma}}{\underset{g_{\sigma}} \sdarrow} \{\text{certain type subsets of } X\}\] where $f_{\sigma}(\cN):=\bigcup\limits_{N\in\cN}\sigma(N)$ for a given type of subcategory $\cN$ and $g_{\sigma}(S):=\{E\in\cE: \sigma(E)\subseteq S\}$ for certain subset $S$. For any $x,y\in X$, we have \vp{0.1} \[\xymatrix@C=-6.5em{\text{for all $x$, we have }\overline{\{x\}}=\sigma(E) \text{ for some } E\in\cE \ar@2{<->}[d]_{(X,\sigma)\text{ is closed}} && \text{for all $x$, we have } X\setminus\overline{\{x\}}=\sigma(E) \text{ for some } E\in\cE\ar@{=>}@/_2pc/[d]_{(X,\sigma)\text{ is open}} \\ f_{\sigma}g_{\sigma}=1_{pm(X)}\ar@{=>}[d]_{X \text{ is } T_0} && f_{\sigma}g_{\sigma}= 1_{\{X\setminus\overline{\{x\}}\}_{x\in X}}\ar@{=>}[d]^{X \text{ is } T_0}\ar@{=>}@/_2pc/[u]_{(X,\sigma)\text{ is Noeth open}} \\ \cS_{X,\sigma}(x)=\cS_{X,\sigma}(y) \text{ implies } x=y \ar@2{<->}[rdd]_{(X,\sigma) \text{ is a Noeth Sober closed support}} && \cN_{X,\sigma}(x)=\cN_{X,\sigma}(y) \text{ implies } x=y \ar@2{<->}[ldd]^{(X,\sigma) \text{ is an open support}} \\ & \\ & \cP_{X,\sigma}(x)=\cP_{X,\sigma}(y) \text{ implies } x=y\ar@2{<->}[d] & \\ & C_{X,\sigma}\text{ and }X\text{ are bijective} & }\] (please refer to the corresponding Sections for the notations) and we give immersion/reconstruction results for closed (Theorem \ref{immersion}) and open supports (Theorem \ref{opimmersion}) under slightly different hypotheses because of the above differences.

The immersion/reconstruction phenomenon also appears in other context(s). For example, in the last Section, we exhibit this for categories with a finite group action. We show, if the equivariantization $\cE^G$ of an extriangulated category $\cE$ carries a naturally induced extriangulated category structure, then there is an immersion from an action-invariant spectrum of $\cE$ to an appropriately chosen spectrum of $\cE^G$ (Theorem \ref{actionthm}). In particular, this gives an immersion of the $G$-Matsui spectrum of $\cE$ to the Matsui spectrum of $\cE^G$ (Corollary \ref{gmatimmer}) if $\cE$ is a closed tensor triangulated category generated by the tensor unit. Similarly, for an abelian category $\cA$ with a group action by a finite group $G$, there is an immersion from the $G$-Serre spectrum to the Serre spectrum of $\cA^G$ (Corollary \ref{gserreimmer}).

We use $\cE$, $\cT$, $\cA$ and $\cG$ to denote an extriangulated, triangulated, abelian and Grothendieck category respectively throughout the paper.

\vp{0.3}

\section{Topological Preliminaries} \label{sec2}

We begin with a few basic but important facts from (spectral) topology.

\begin{lem} \label{top} Let $U\subseteq X$ be an open set in a topological space $X$ and $x\in X$ is a point in the space, then \begin{enumerate} \item $U\cap\overline{\{x\}}=\emptyset$ if and only if $x\notin U$. \item $U\cap\overline{\{x\}}=\emptyset$ implies $y\notin U$ for any $y\in\overline{\{x\}}$. \end{enumerate} \end{lem}

\begin{proof} For the first statement, we have

$(\Rightarrow)$: if not, that is, $x\in U$, then $x\in U\cap\overline{\{x\}}$, contradiction.

$(\Leftarrow)$: if not, that is, there is some $y\in X$ such that $y\in U$ and $y\in\overline{\{x\}}$. Note that open subsets are generalization-closed, this implies $x\in U$, contradiction.

For any $y\in\overline{\{x\}}$, we have $\overline{\{y\}}\subseteq\overline{\{x\}}$, so $U\cap\overline{\{y\}}=\emptyset$ and this gives $y\notin U$ by $(1)$. \end{proof}

\begin{defn}

A topological space $X$ is \textbf{spectral} if it's homeomorphic to the spectrum of a commutative ring. The \textbf{Hochster dual} $X^{\vee}$ is a new topology on $X$ by taking as basic open subsets the closed sets with quasi-compact complements. \end{defn}

Note that the "closed" and "quasi-compact" in the above Definition both refer to the original topology of $X$ and they are the \textbf{Thomason subsets} of $X$. 

\begin{lem} \label{nsspace} Let $X$ be a Noetherian spectral space and $U$ be an open subset in $X$. Then

\begin{enumerate}

\item $U$ is quasi-compact in $X$. Hence, the basic open subsets of the Hochster dual $X^{\vee}$ are simply the closed sets of $X$.
\item $U$ is closed in $X^{\vee}$.

\end{enumerate}

In particular, open sets in $X^{\vee}$ are unions of closed subsets of $X$ (that is, the specialization-closed subsets of $X$), and closed sets of $X^{\vee}$ are intersections of open subsets of $X$ (that is, the generalization-closed subsets of $X$). \end{lem}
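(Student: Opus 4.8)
The plan is to establish the two numbered claims and then read off the final description of the Hochster dual topology as a formal consequence. For part (1), I would recall the standard fact that a spectral space is Noetherian (in the sense that its lattice of open sets satisfies the ascending chain condition) if and only if \emph{every} open subset is quasi-compact; this is the key structural input. Indeed, if some open $U$ were not quasi-compact, one could extract a strictly increasing chain of quasi-compact opens exhausting $U$ (using that the quasi-compact opens form a basis in a spectral space), contradicting the ascending chain condition; conversely, ACC on opens follows if all opens are quasi-compact. Granting this, every open $U \subseteq X$ is quasi-compact, so its complement $X \setminus U$ is a closed set with quasi-compact complement, i.e.\ a Thomason subset. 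Since the basic opens of $X^\vee$ are by definition the closed sets with quasi-compact complement, and now \emph{every} closed set has this property (its complement is open, hence quasi-compact), the basic opens of $X^\vee$ are exactly the closed subsets of $X$.

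For part (2), let $U \subseteq X$ be open. I want to show $U$ is closed in $X^\vee$, i.e.\ that $X \setminus U$ is open in $X^\vee$. But $X \setminus U$ is closed in $X$, and by part (1) closed subsets of $X$ are precisely the basic opens of $X^\vee$; in particular $X \setminus U$ is open in $X^\vee$. Hence $U$ is closed in $X^\vee$. (Symmetrically, a closed set of $X$ is open in $X^\vee$, which is just the definitional statement repackaged via part (1).)

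Finally, for the ``in particular'' clause: open sets of $X^\vee$ are by definition unions of basic opens, which by part (1) are the closed subsets of $X$; a union of closed subsets of $X$ is exactly a specialization-closed subset (it contains $\overline{\{x\}}$ for each of its points). Dually, closed sets of $X^\vee$ are complements of such unions, hence intersections of open subsets of $X$, which are exactly the generalization-closed subsets. This requires only the elementary observation that specialization-closed subsets are precisely unions of point-closures and generalization-closed subsets are precisely their complements.

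I expect the only real content to be part (1), specifically the equivalence between Noetherianness of a spectral space and quasi-compactness of all its open subsets; everything after that is bookkeeping with the definition of the Hochster dual. If one prefers to avoid invoking the ACC characterization as a black box, the alternative is to argue directly: in a Noetherian space every subset is quasi-compact (any open cover has a finite subcover because a counterexample would produce an infinite strictly ascending chain of opens), and this applies in particular to the open subsets of the spectral space $X$; this is the main obstacle only in the sense that it is the one step where the Noetherian hypothesis is genuinely used.
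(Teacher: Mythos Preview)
Your proposal is correct and follows essentially the same approach as the paper: part (1) is dispatched as the standard fact that every open in a Noetherian space is quasi-compact, and part (2) is the observation that $X\setminus U$ is itself closed in $X$ and hence (basic) open in $X^{\vee}$. Your write-up simply fills in more detail on the ``standard fact'' and on the ``in particular'' clause, which the paper leaves implicit.
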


\begin{proof} The first statement is a standard fact for Noetherian spaces. For the second, it's enough to show $X\setminus U$ is open in $X^{\vee}$. By definition, this amounts to show $X\setminus U$ can be written as a union of closed sets in $X$. This is obvious as $X\setminus U$ is itself closed in $X$. \end{proof}

\begin{ex} $X=Spec(R)$ is a Noetherian spectral space, when $R$ is a Noetherian commutative ring. \end{ex}

\begin{ex} Any Noetherian scheme is a Noetherian spectral space.
\end{ex}

\begin{ex} In general, any Noetherian Kolmogorov space is Noetherian spectral. (A space is Kolmogorov if for any two distinct points there exists an open subset containing exactly one of the two). \end{ex}

\vp{0.3}


\section{Spectrum and support for extriangulated categories}

\subsection{Spectrum and support} \label{supp} Let $\cE$ be an extriangulated category. For references for extriangulated categories, one can see \cite{nakaoka2019extri}.

\begin{defn} Let $\cN$ be a non-empty full additive subcategory of $\cE$ and $L\xra{}M\xra{}N$ a conflation, we say

\begin{enumerate}

\item $\cN$ is \textbf{thick}, if it's closed under direct summands and has the 2-out-of-3 property for conflations, that is, if two of three objects $L,M,N$ belong to $\cN$, then so does the third. Denote the collection of thick subcategories by $Th(\cE)$.

\item $\cN$ is \textbf{Serre}, if $L, N\in\cN$ if and only if $M\in\cN$. Denote the collection of Serre subcategories by $Serre(\cE)$.

\end{enumerate}

Denote the collection of non-empty subcategories closed under finite direct sums and direct summands (hence containing the zero object) by $as(\cE)$ (i.e., ``add"+``smd"). It is clear that $Serre(\cE)\subseteq Th(\cE)\subseteq as(\cE)$. \end{defn}
It is obvious that the above definitions reduce to thick and Serre subcategories in the usual sense when $\cE$ is in fact triangulated and abelian, respectively. Following is motivated by the definitions in \cite{balmer2005spectrum} \cite{matsui2019singular} and \cite{yu2023support}.

\begin{defn}\label{defsupp} A \textbf{support} for $\cE$ is a pair $(X,\sigma)$ where $X$ is a topological space and an assignment $\sigma: \cE\xra{} subset(X)$ such that $\sigma(0)=\emptyset$ and $\sigma(E\oplus F)=\sigma(E)\cup \sigma(F)$ for any $E,F\in\cE$. We say $(X,\sigma)$ is a \textbf{Noetherian/spectral support} if $X$ is Noetherian/spectral. Similarly, it's an \textbf{open/closed support} if $\sigma(E)$ is open/closed for any object $E\in\cE$. \end{defn}

\begin{defn}
Let $(X,\sigma)$ and $(Y,\tau)$ be two supports on the same category $\cE$. A \textbf{morphism of supports} $f: (X,\sigma)\xra{}(Y,\tau)$ is a continuous map $f:X\xra{}Y$ such that $\sigma(M)=f^{-1}(\tau(E))$ for any $E\in\cE$. Such a morphism is an isomorphism if and only if $f$ is a homeomorphism.
\end{defn}

We generalize the spectra construction of \cite{matsui2020construction} to extriangulated categories to the following.

\begin{defn} (\cite{matsui2020construction}) Suppose $C$ is a subset of $as(\cE)$ and $\cX$ a class of objects, define \[Z_C(\cX):=\{\cP\in C: \cP\cap\cX=\emptyset\}\] \[U_C(\cX):=C\setminus Z_C(\cX)=\{\cP\in C: \cP\cap\cX\neq\emptyset\}\] \end{defn}

One sees immediately that

\begin{itemize}
  \item $Z_C(\cE)=\emptyset$ and $Z_C(\emptyset)=C$
  \item $\bigcap_{i\in I} Z_C(\cX_i)=Z_C(\bigcup_{i\in I}\cX_i)$ for any family $\{\cX_i\}_{i\in I}$ of objects of $\cE$.
  \item $Z_C(\cX)\cup Z_C(\cX')=Z_C(\cX\oplus\cX')$ for classes of objects $\cX,\cX'$, where $\cX\oplus\cX':=\{E\oplus E': E\in\cX, E'\in\cX'\}$.
\end{itemize}

This gives a topology on $C$ by defining the collection of closed subsets to be $\{Z_C(\cX)\}$ (the open subsets are $\{U_C(\cX)\}$), denote this topological space by $Esp_C(\cE)$ and call it the \textbf{C-spectrum} of $\cE$.

\begin{defn} (\cite{matsui2020construction}) For any object $E\in\cE$, its \textbf{C-support} is defined to be \[supp_C(E):=Z_C(\{E\})=\{\cP\in Esp_C(\cE): E\notin\cP\}\] \end{defn} We have

\begin{lem} \label{blem} \hp{1}

\begin{enumerate}

\item $\{supp_C(E)\}_{E\in\cE}$ is a closed basis for $Esp_C(\cE)$. Similarly, $\{U_C(E)\}_{E\in\cE}$ is an open basis for $Esp_C(\cE)$.

\item $supp_C(0)=\emptyset$ and $supp_C(E\oplus F)=supp_C(E)\cup supp_C(F)$ for any $E,F\in\cE$, that is, $(Esp_C(\cE),supp_C)$ is a closed support on $\cE$, for any choice of $C\subseteq as(\cE)$. \end{enumerate}

Moreover, \begin{enumerate}\setcounter{enumi}{2}

\item If $C\subseteq Th(\cE)$. Then for any conflation $L\xra{}M\xra{}N$ we have \[supp_C(M)\subseteq supp_C(L)\cup supp_C(N)\] \[supp_C(L)\subseteq supp_C(M)\cup supp_C(N)\] \[supp_C(N)\subseteq supp_C(L)\cup supp_C(M)\]

\item If $C\subseteq Serre(\cE)$. Then for any conflation $L\xra{}M\xra{}N$ we have \[supp_C(M)=supp_C(L)\cup supp_C(N)\]

\end{enumerate} \end{lem}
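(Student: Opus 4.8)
The plan is to prove each of the four assertions of Lemma \ref{blem} directly from the definitions of $Z_C$, $U_C$ and $supp_C$, using the three elementary identities for $Z_C$ noted just before the definition of $Esp_C(\cE)$.

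\textbf{Proof proposal.} For (1), note that by construction the closed sets of $Esp_C(\cE)$ are exactly the $Z_C(\cX)$ for $\cX$ a class of objects, and the identity $\bigcap_{i} Z_C(\cX_i)=Z_C(\bigcup_i\cX_i)$ shows every closed set is an intersection of the sets $Z_C(\{E\})=supp_C(E)$; hence $\{supp_C(E)\}_{E\in\cE}$ is a closed basis (i.e.\ its complements $U_C(E)$ form an open basis). The only subtlety is that a closed set is given by a \emph{class} $\cX$ rather than a set, but since $Z_C(\cX)=\bigcap_{E\in\cX}supp_C(E)$ the size of $\cX$ is irrelevant — it is still an intersection of members of the proposed basis. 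For (2): $supp_C(0)=Z_C(\{0\})=\{\cP\in C:0\notin\cP\}=\emptyset$ because every $\cP\in as(\cE)$ contains the zero object; and $supp_C(E\oplus F)=Z_C(\{E\oplus F\})$ equals $Z_C(\{E\})\cup Z_C(\{F\})$ — here I would unwind it by hand: $\cP\in supp_C(E\oplus F)$ means $E\oplus F\notin\cP$, and since $\cP$ is closed under finite direct sums and direct summands, $E\oplus F\in\cP \iff E\in\cP$ and $F\in\cP$, so $E\oplus F\notin\cP \iff E\notin\cP$ or $F\notin\cP$, which is exactly $\cP\in supp_C(E)\cup supp_C(F)$. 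Together with (1) this says $(Esp_C(\cE),supp_C)$ is a closed support.

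For (3), assume $C\subseteq Th(\cE)$ and fix a conflation $L\to M\to N$ and a point $\cP\in C$. By contraposition, $\cP\notin supp_C(L)\cup supp_C(N)$ means $L\in\cP$ and $N\in\cP$; the $2$-out-of-$3$ property for conflations then forces $M\in\cP$, i.e.\ $\cP\notin supp_C(M)$. This proves $supp_C(M)\subseteq supp_C(L)\cup supp_C(N)$, and the other two containments follow by the same argument applied to the other two ``out of three'' cases ($M,N\in\cP\Rightarrow L\in\cP$ and $L,M\in\cP\Rightarrow N\in\cP$). For (4), assume $C\subseteq Serre(\cE)$; the Serre condition says $M\in\cP\iff L,N\in\cP$, so $\cP\notin supp_C(M)\iff L\in\cP$ and $N\in\cP\iff \cP\notin supp_C(L)$ and $\cP\notin supp_C(N)$, which upon taking complements gives $supp_C(M)=supp_C(L)\cup supp_C(N)$ exactly.

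There is essentially no hard part here: the whole lemma is a routine translation of the closure properties defining $as(\cE)$, $Th(\cE)$ and $Serre(\cE)$ into statements about membership of the ``points'' $\cP$. The only point requiring a moment's care is the class-versus-set issue in (1) — one must make sure the topology really does have $\{Z_C(\cX)\}$ as its closed sets even when $\cX$ ranges over proper classes, which is immediate from the arbitrary-intersection identity. I would present all four parts in the order (1), (2), (3), (4) as above, since (2) uses (1) for the ``closed support'' conclusion and (3),(4) are independent refinements under stronger hypotheses on $C$.
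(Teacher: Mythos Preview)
Your proposal is correct and follows exactly the same approach as the paper: the paper's proof simply notes that (1) holds because $Z_C(\cX)=\bigcap_{E\in\cX}supp_C(E)$ and declares the rest ``straightforward,'' while you spell out what that straightforward verification actually entails. Your more detailed treatment of (2)--(4) via contraposition and the defining closure properties of $as(\cE)$, $Th(\cE)$, $Serre(\cE)$ is precisely what the paper leaves implicit.
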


\begin{proof} The first statement holds because $Z_C(\cX)=\bigcap\limits_{E\in\cX}supp_C(M)$ for any $\cX$ and the rest are straightforward. \end{proof}

The reason for asking $C$ to be a subset of $as(\cE)$ is because this is what makes $Esp_C(\cE)$ into an actual topological space and $supp_C$ a closed support. Also,

\begin{lem} For any $C\subseteq as(\cE)$ and $\cN\in C$, we have $\bigcup_{N\in\cN} supp_C(N)=\{\cP\in C: \cN\nsubseteq\cP\}$. \end{lem}

\begin{proof} Indeed, $\cP\in \bigcup_{N\in\cN} supp_C(N)$ means $\cP\in supp_C(N)$ for some $N\in\cN$, that is, there is some $N\in\cN$ such that $N\notin\cP$ so $\cN\nsubseteq\cP$. Conversely, if there is some $N\in\cN$ such that $N\neq\cP$, this means exactly $\cP\in supp_C(N)$. \end{proof}

\begin{prop} \label{closure} Let $S\subseteq Esp_C(\cE)$. The closure of $S$ is $\overline{S}=\bigcap_{S\subseteq supp_C(E)} supp_C(E)$. \end{prop}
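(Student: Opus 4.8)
The plan is to use the general topology fact that the closure of a set $S$ equals the intersection of all closed sets containing $S$, together with the description from Lemma \ref{blem} that the family $\{supp_C(E)\}_{E\in\cE}$ is a closed basis (in fact a closed subbasis closed under finite unions, since $supp_C(E)\cup supp_C(F)=supp_C(E\oplus F)$) for the topology on $Esp_C(\cE)$. First I would recall that every closed set of $Esp_C(\cE)$ is of the form $Z_C(\cX)$ for some class of objects $\cX$, and that $Z_C(\cX)=\bigcap_{E\in\cX} supp_C(E)$; hence every closed set is an intersection of sets of the form $supp_C(E)$. Therefore $\overline{S}$, being the intersection of all closed sets containing $S$, is contained in the intersection of those particular basic closed sets $supp_C(E)$ that happen to contain $S$, i.e. $\overline{S}\subseteq \bigcap_{S\subseteq supp_C(E)} supp_C(E)$.

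For the reverse inclusion, I would argue that $\bigcap_{S\subseteq supp_C(E)} supp_C(E)$ is itself a closed set (an intersection of closed sets) and that it contains $S$ (each set in the intersection contains $S$ by the indexing condition); since $\overline{S}$ is the smallest closed set containing $S$, we get $\overline{S}\subseteq \bigcap_{S\subseteq supp_C(E)} supp_C(E)$ — wait, that is the same direction. Let me instead phrase the reverse inclusion as: any closed set $Z$ containing $S$ can be written $Z=\bigcap_{E\in\cX} supp_C(E)$ for a suitable $\cX$, and for each such $E$ we have $S\subseteq Z\subseteq supp_C(E)$, so $E$ appears among the indices of the right-hand intersection; consequently $\bigcap_{S\subseteq supp_C(E)} supp_C(E)\subseteq Z$. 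Taking the intersection over all closed $Z\supseteq S$ yields $\bigcap_{S\subseteq supp_C(E)} supp_C(E)\subseteq \overline{S}$, which is the desired reverse inclusion.

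Combining the two inclusions gives the equality. The only mildly delicate point — and the step I would be most careful about — is the bookkeeping that every closed set is genuinely an intersection of the basic closed sets $supp_C(E)$: this is exactly the content of $Z_C(\cX)=\bigcap_{E\in\cX} supp_C(E)$ recorded in the proof of Lemma \ref{blem}, so no new work is needed, but it is the hinge of the argument and should be cited explicitly. Everything else is the standard "closure = intersection of basic closed sets containing the set" manipulation, valid whenever one has a closed basis that is stable under finite unions, which holds here.
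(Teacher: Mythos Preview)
Your argument is correct and is essentially the paper's own proof: both directions hinge on the fact that every closed set of $Esp_C(\cE)$ is an intersection of sets $supp_C(E)$, which is exactly $Z_C(\cX)=\bigcap_{E\in\cX}supp_C(E)$ from Lemma~\ref{blem}. Apart from the brief detour where you re-derived the easy inclusion before correcting course, your write-up matches the paper's reasoning step for step; note also that stability under finite unions is not actually needed here---a closed subbasis suffices---so you can drop that remark.
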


\begin{proof} This follows from the fact that $\{supp_C(E)\}_{E\in\cE}$ is a closed basis for the topology of $Esp_C(\cE)$. Indeed, $\bigcap_{S\subseteq supp_C(E)} supp_C(E)=Z_C(\{E\in\cE: S\subseteq supp_C(E)\})$ so it's closed and contains $S$. Let $T$ be any closed subset containing $S$, $T$ is the intersection of $supp_C(E)$ for some $E\in\cE$ since $\{supp_C(E)\}_{E\in\cE}$ is a closed basis. Then $S\subseteq\bigcap\limits_{\text{some } E} supp_C(E)$ forces $S\subseteq supp_C(E)$ for all the objects in the collection. \end{proof}

The proof in \cite{matsui2020construction} also gives

\begin{prop}\label{t0} (Proposition 2.3 \cite{matsui2020construction}) For any $\cP\in C$, one has $\overline{\{\cP\}}=\{\cQ\in Esp_C(\cE): \cQ\subseteq\cP\}$. In particular, $Esp_C(\cE)$ is a $T_0$-space. \end{prop}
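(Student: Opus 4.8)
The plan is to compute $\overline{\{\cP\}}$ directly from Proposition~\ref{closure} and then deduce the $T_0$ property. First I would recall that for a single point $S=\{\cP\}$, Proposition~\ref{closure} gives $\overline{\{\cP\}}=\bigcap_{\cP\in supp_C(E)} supp_C(E)$, where the intersection runs over all objects $E\in\cE$ with $\cP\in supp_C(E)$, i.e.\ all $E\notin\cP$. So a point $\cQ\in C$ lies in $\overline{\{\cP\}}$ if and only if $\cQ\in supp_C(E)$ for every $E\notin\cP$, which unwinds to: for every object $E$, if $E\notin\cP$ then $E\notin\cQ$. The contrapositive of that implication is exactly ``$E\in\cQ\Rightarrow E\in\cP$'', i.e.\ $\cQ\subseteq\cP$ (as subcategories, membership of objects is what matters here since these are full subcategories). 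This establishes $\overline{\{\cP\}}=\{\cQ\in Esp_C(\cE):\cQ\subseteq\cP\}$.

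For the $T_0$ claim, I would argue that if $\overline{\{\cP\}}=\overline{\{\cQ\}}$ then by the description just obtained we have $\cQ\subseteq\cP$ and $\cP\subseteq\cQ$, hence $\cP=\cQ$ as subcategories of $\cE$, so distinct points have distinct closures; this is one of the standard equivalent formulations of the $T_0$ (Kolmogorov) separation axiom. Alternatively, given distinct $\cP,\cQ$, without loss of generality there is an object $E$ with $E\in\cP$ but $E\notin\cQ$ (or vice versa), and then $U_C(E)$ is an open set containing $\cP$ but not $\cQ$, which is the other usual form of $T_0$.

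The only genuine subtlety — and the step I'd flag as the main thing to get right rather than a real obstacle — is the bookkeeping of the double negation: $\cQ\in\overline{\{\cP\}}$ translates to ``$E\notin\cP\Rightarrow E\notin\cQ$ for all $E$,'' and one must take the contrapositive correctly to land on $\cQ\subseteq\cP$ (not $\cP\subseteq\cQ$). It is also worth noting that the identification of a subcategory with its object class is harmless precisely because all subcategories in $as(\cE)$ are full, so $\cQ\subseteq\cP$ as subcategories is equivalent to the inclusion of their object classes. Everything else is formal, and indeed the statement is quoted from \cite{matsui2020construction}, so the proof is essentially the observation that Proposition~\ref{closure} specializes cleanly to singletons.
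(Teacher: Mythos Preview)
Your argument is correct and is exactly the natural specialization of Proposition~\ref{closure} to a singleton, with the contrapositive handled properly; the paper itself does not spell out a proof but simply cites \cite{matsui2020construction}, so there is nothing further to compare.
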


Suppose $C_1\subseteq C_2\subseteq as(\cE)$ are subsets of $as(\cE)$, we obviously have

\begin{lem} \label{two} There is an immersion $Esp_1(\cE)\hookrightarrow Esp_2(\cE)$ where $Esp_i(\cE):=Esp_{C_i}(\cE)$. \end{lem}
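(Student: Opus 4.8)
The plan is to check that the set-theoretic inclusion $\iota\colon C_1\hookrightarrow C_2$ becomes a topological embedding once $C_1$ carries the topology of $Esp_1(\cE)$ and $C_2$ that of $Esp_2(\cE)$; this is exactly what the asserted immersion $Esp_1(\cE)\hookrightarrow Esp_2(\cE)$ means, namely a homeomorphism onto its image when the latter is given the subspace topology.

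Everything reduces to the one set-theoretic identity
\[
Z_{C_1}(\cX)=Z_{C_2}(\cX)\cap C_1,
\]
valid for every class of objects $\cX$: a subcategory $\cP\in C_1$ satisfies $\cP\cap\cX=\emptyset$ precisely when it does so regarded as an element of $C_2$. First I would read off continuity of $\iota$ from this, since $\iota^{-1}(Z_{C_2}(\cX))=Z_{C_1}(\cX)$ is closed in $Esp_1(\cE)$ for every $\cX$, and the sets $Z_{C_2}(\cX)$ are exactly the closed subsets of $Esp_2(\cE)$. Then I would use the same identity to see that the subspace topology on $C_1\subseteq Esp_2(\cE)$ has as its closed sets precisely the sets $Z_{C_1}(\cX)$, i.e.\ exactly the closed sets of $Esp_1(\cE)$; since $\iota$ is injective, it is therefore a homeomorphism onto its image. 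Alternatively, invoking Lemma \ref{blem}(1), it is enough to observe $\iota^{-1}(supp_{C_2}(E))=supp_{C_1}(E)$ for every object $E$, so that $\iota$ pulls a closed basis of $Esp_2(\cE)$ back to a closed basis of $Esp_1(\cE)$, which again yields both continuity and the embedding property.

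There is essentially no obstacle. The only point worth flagging is that the displayed identity must be applied to arbitrary classes $\cX$, not merely to singletons, in order to control all closed sets and not just the basic ones; but this is immediate from the definition of $Z_C$, so the whole content of the lemma is this one-line identity.
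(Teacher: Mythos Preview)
Your proposal is correct and is exactly the natural unpacking of the statement; the paper itself offers no proof, merely prefacing the lemma with ``we obviously have,'' so your argument via the identity $Z_{C_1}(\cX)=Z_{C_2}(\cX)\cap C_1$ is precisely what is being left implicit.
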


\subsection{Morphisms and idempotent completion for spectra of triangulated categories} Suppose $\cT$ is a triangulated category and $\cT^{\natural}$ its idempotent completion. In this subsection, we consider subsets $C\subseteq Th(\cT)$, although some of the followings hold under more general assumptions. The purpose of this subsection is to show there is an appropriate subset $D\subseteq Th(\cT^{\natural})$ such that $Esp_D(\cT^{\natural})$ is homeomorphic to $Esp_C(\cT)$, so one may assume the underlying category $\cT$ to be idempotent complete when necessary.

Let $F:\cT'\xra{}\cT$ be a triangulated functor and $D\subseteq Th(\cT)$ a subset, define \[F^{-1}D:=\{F^{-1}\cP: \cP\in D\}\] It's straightforward to show $F^{-1}\cP\in Th(\cT')$ so $F^{-1}D\subseteq Th(\cT')$. Define \[Esp\,F: Esp_D\cT\xra{}Esp_{F^{-1}D}\cT'\] \[\hp{0.35}\cP\mapsto F^{-1}\cP\]

\begin{lem} \label{cts} $Esp\,F$ is continuous. \end{lem}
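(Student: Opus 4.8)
The plan is to verify continuity by checking that preimages of basic closed sets are closed, using the closed basis $\{supp_C(E)\}_{E \in \cE}$ furnished by Lemma~\ref{blem}(1). Concretely, for a triangulated functor $F : \cT' \to \cT$ and an object $E' \in \cT'$, I would compute the preimage under $Esp\,F$ of the basic closed set $supp_{F^{-1}D}(E') = \{\cQ \in Esp_{F^{-1}D}(\cT') : E' \notin \cQ\}$ and show it coincides with the basic closed set $supp_D(F(E'))$ in $Esp_D(\cT)$. Since a map is continuous as soon as preimages of a closed subbasis are closed, and here the $supp$ sets even form a closed \emph{basis}, this suffices.

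The key computation is the following chain of equivalences: for $\cP \in Esp_D(\cT)$, we have $Esp\,F(\cP) \in supp_{F^{-1}D}(E')$ iff $E' \notin F^{-1}\cP$ iff $F(E') \notin \cP$ iff $\cP \in supp_D(F(E'))$. The middle step is just the definition of $F^{-1}\cP = \{X \in \cT' : F(X) \in \cP\}$. Hence $(Esp\,F)^{-1}\bigl(supp_{F^{-1}D}(E')\bigr) = supp_D(F(E'))$, which is closed in $Esp_D(\cT)$. For a general basic closed set one intersects, using that $Z_C(\cX) = \bigcap_{E \in \cX} supp_C(E)$ (recorded in the proof of Lemma~\ref{blem}) and that preimages commute with intersections, so preimages of all closed sets are closed and $Esp\,F$ is continuous.

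I should also note, as part of making the statement meaningful, that $Esp\,F$ is well defined: one needs $F^{-1}\cP \in Th(\cT')$ whenever $\cP \in Th(\cT)$, which the excerpt already asserts is straightforward (thickness, i.e., closure under summands and the $2$-out-of-$3$ property for triangles, is pulled back along the exact functor $F$ since $F$ takes triangles to triangles and respects direct sums and summands). This is genuinely routine and I would dispatch it in a sentence.

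The main obstacle is essentially bookkeeping rather than mathematics: being careful that the ambient space on the target side is $Esp_{F^{-1}D}(\cT')$ with \emph{its own} closed basis indexed by objects of $\cT'$, so that the basic closed sets one pulls back are the $supp_{F^{-1}D}(E')$ for $E' \in \cT'$, and not to conflate these with supports computed in $\cT$. Once the indexing is kept straight, the identity $(Esp\,F)^{-1}\bigl(supp_{F^{-1}D}(E')\bigr) = supp_D(F(E'))$ falls out immediately from unwinding definitions, and continuity follows.
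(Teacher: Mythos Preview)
Your proof is correct and essentially the same as the paper's: you verify $(Esp\,F)^{-1}\bigl(supp_{F^{-1}D}(E')\bigr) = supp_D(F(E'))$ using the closed basis, while the paper does the complementary computation $(Esp\,F)^{-1}(U_{F^{-1}D}(M))=U_D(F(M))$ using the open basis. The underlying argument is identical.
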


\begin{proof} We will show $(Esp\,F)^{-1}(U_{F^{-1}D}(M))=U_D(F(M))$ for any $M\in\cT'$. Indeed, $LHS=(Esp\,F)^{-1}(\{F^{-1}\cP: \cP\in D \text{ and } M\in F^{-1}\cP\})=\{\cP\in D: F(M)\in\cP\}=RHS$. \end{proof}

A triangulated functor $F:\cT'\xra{}\cT$ is a \textbf{triangle equivalence up to direct summands} if it's fully faithful and for any object $M\in\cT$, there is another object $N\in\cT$ such that $M\oplus N\in F(\cT')$.

\begin{lem} \label{equisum} (Lemma 2.10 \cite{matsui2021prime}) Let $F:\cT'\xra{}\cT$ be a triangle equivalence up to direct summand. Then one has $M\oplus M[1]\in F(\cT')$ for any $M\in\cT$. \end{lem}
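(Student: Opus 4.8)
The plan is to exploit the fact that the essential image of a fully faithful triangulated functor is closed under shifts and cones (it is a triangulated subcategory of $\cT$, though in general not thick), and then to exhibit $M\oplus M[1]$ as a cone of an explicit endomorphism of an object that already lies in $F(\cT')$.

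First I would fix $M\in\cT$ and, using the hypothesis, choose $N\in\cT$ together with $P\in\cT'$ and an isomorphism $F(P)\cong M\oplus N$; identify $F(P)$ with $M\oplus N$ along it. Let $e\colon M\oplus N\to M\oplus N$ be the projection onto the summand $N$, that is, the identity on $N$ and zero on $M$. Since $F$ is full there is $\tilde e\colon P\to P$ with $F(\tilde e)=e$ (faithfulness moreover forces $\tilde e^2=\tilde e$, although idempotency is not actually needed below). Complete $\tilde e$ to a distinguished triangle $P\xra{\tilde e}P\to C\to P[1]$ in $\cT'$; applying the triangulated functor $F$ yields a distinguished triangle $M\oplus N\xra{e}M\oplus N\to F(C)\to (M\oplus N)[1]$ in $\cT$.

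Next I would compute a cone of $e$ directly. Viewing $e$ as the morphism of direct sums $0_M\oplus 1_N$, where $0_M\colon M\to M$ is the zero map and $1_N\colon N\to N$ the identity, and using that a direct sum of distinguished triangles is distinguished, a cone of $e$ is the direct sum of a cone of $0_M$ and a cone of $1_N$. A cone of $1_N$ is $0$, while the triangle on the zero map $M\xra{}M$ splits, so a cone of $0_M$ is $M\oplus M[1]$. Hence $M\oplus M[1]$ is a cone of $e$, and since any two cones of a fixed morphism are isomorphic, $F(C)\cong M\oplus M[1]$. Therefore $M\oplus M[1]\in F(\cT')$.

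The only point requiring care is the cone computation: one must use that $F$ sends the chosen completion to a distinguished triangle and that cones are unique up to isomorphism, and that the cone of the block-diagonal morphism $0_M\oplus 1_N$ splits as the direct sum of the cones of its blocks, which follows from the stability of distinguished triangles under direct sums. Everything else is formal.
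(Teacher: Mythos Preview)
Your argument is correct. The paper does not supply its own proof of this lemma---it merely cites \cite{matsui2021prime}---so there is nothing to compare against directly. That said, the argument you give is the standard one (essentially the one in Thomason's work and in Balmer--Schlichting on idempotent completion, and the one Matsui records): the essential image of a fully faithful triangulated functor is a triangulated subcategory, and the cone of the idempotent $0_M\oplus 1_N$ on $F(P)\cong M\oplus N$ is $M\oplus M[1]$. Every step you wrote checks out, including the cone computation via the direct-sum decomposition of the triangle on $0_M\oplus 1_N$ into the triangles on $0_M$ and $1_N$.
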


Let $F:\cT'\xra{}\cT$ be a triangle equivalence up to direct summands starting from now and $M\in\cT$ any object. By Lemma \ref{equisum} there is some object $M'\in\cT'$ such that $M\oplus M[1]\cong F(M')$.

\begin{lem} \label{open} For any $\cP\in Th(\cT)$, we have $M\in\cP$ if and only if $M'\in F^{-1}\cP$. \end{lem}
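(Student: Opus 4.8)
The plan is to unwind the definitions and use the thickness of $\cP$ together with Lemma~\ref{equisum}. Recall we have fixed a triangle equivalence up to direct summands $F\colon\cT'\xra{}\cT$ and an object $M\in\cT$, and by Lemma~\ref{equisum} we chose $M'\in\cT'$ with $F(M')\cong M\oplus M[1]$. By definition $M'\in F^{-1}\cP$ means precisely $F(M')\in\cP$, i.e. $M\oplus M[1]\in\cP$.

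First I would prove the forward direction: if $M\in\cP$, then since $\cP$ is thick (in particular a triangulated subcategory closed under shifts), $M[1]\in\cP$, and hence $M\oplus M[1]\in\cP$ because additive subcategories are closed under finite direct sums; this says exactly $F(M')\in\cP$, i.e. $M'\in F^{-1}\cP$. For the converse, suppose $M'\in F^{-1}\cP$, i.e. $M\oplus M[1]\in\cP$. Since $\cP$ is closed under direct summands (part of being thick), both $M$ and $M[1]$ lie in $\cP$; in particular $M\in\cP$. This completes both implications.

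There is essentially no obstacle here: the statement is a direct consequence of the closure properties packaged into the definition of a thick subcategory (direct summands, shifts, finite sums). The only point that requires a moment's care is making sure the chosen $M'$ really has $F(M')\cong M\oplus M[1]$ rather than merely $M\oplus N$ for some unspecified $N$ — but this is exactly the content of Lemma~\ref{equisum}, which upgrades the a priori $N$ to $M[1]$, and it is what makes the ``if and only if'' clean in both directions. One should also note that $F^{-1}\cP$ is genuinely a thick subcategory of $\cT'$ (already observed in the paragraph preceding Lemma~\ref{cts}), so the notation $M'\in F^{-1}\cP$ is unambiguous.

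I would therefore keep the proof to two or three sentences: identify $M'\in F^{-1}\cP \iff M\oplus M[1]\in\cP$ via the definition of preimage and the isomorphism $F(M')\cong M\oplus M[1]$, then invoke closure under shifts and finite sums for one direction and closure under direct summands for the other.
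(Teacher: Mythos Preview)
Your proposal is correct and matches the paper's own proof essentially line for line: both identify $M'\in F^{-1}\cP$ with $M\oplus M[1]\in\cP$, use closure under shifts and finite direct sums for the forward implication, and closure under direct summands for the converse.
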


\begin{proof} For any $M\in\cP$, we have $F(M')\cong M\oplus M[1]\in\cP$ as $\cP$ is closed under finite direct sums and shifts, this gives $M'\in F^{-1}\cP$. Conversely, if $M'\in F^{-1}\cP$, i.e., $M\oplus M[1]\cong F(M')\in\cP$, then $M\in\cP$ as $\cP$ is closed under direct summands. \end{proof}

\begin{lem} \label{inj} Suppose $\cP'=F^{-1}\cP$ for some $\cP\in Th(\cT)$. Then $\cP=\{M\in\cT: M\oplus M[1]\in F(\cP')\}$. \end{lem}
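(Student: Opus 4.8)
The plan is to prove the asserted equality of subcategories by a double inclusion, each direction being almost immediate from the definition of $F^{-1}\cP$ together with Lemma \ref{equisum} (which guarantees that the ``doubling'' $M\oplus M[1]$ of any object of $\cT$ lies in the image of $F$) and Lemma \ref{open}.

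For the inclusion $\{M\in\cT:M\oplus M[1]\in F(\cP')\}\subseteq\cP$, I would take an object $M$ of $\cT$ with $M\oplus M[1]\cong F(P)$ for some $P\in\cP'$. Since $\cP'=F^{-1}\cP$, the definition of the preimage gives $F(P)\in\cP$, hence $M\oplus M[1]\in\cP$ (as $\cP$ is closed under isomorphism), and finally $M\in\cP$ because $\cP$ is thick, hence closed under direct summands. This direction uses only the definition of $F^{-1}\cP$ and closure under summands; it does not even need $F$ to be an equivalence up to summands.

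For the reverse inclusion $\cP\subseteq\{M\in\cT:M\oplus M[1]\in F(\cP')\}$, I would start with $M\in\cP$ and invoke Lemma \ref{equisum} to fix $M'\in\cT'$ with $F(M')\cong M\oplus M[1]$. By Lemma \ref{open}, applied to this $\cP\in Th(\cT)$ and this chosen $M'$, we get $M'\in F^{-1}\cP=\cP'$, so $M\oplus M[1]\cong F(M')\in F(\cP')$, as required. (Alternatively one can argue directly: $\cP$, being thick, is closed under the shift $[1]$, so $M\oplus M[1]\in\cP$; then $F(M')\in\cP$ forces $M'\in\cP'$.)

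I do not anticipate a genuine obstacle. The one point worth a word is the reading of $F(\cP')$: it must be understood as the collection of objects of $\cT$ isomorphic to $F(P)$ for some $P\in\cP'$ (equivalently, the subcategories in play are closed under isomorphism), which is the standing convention. Granting that, the whole argument rests on thickness of $\cP$ — closure under shifts and under direct summands — and on Matsui's Lemma \ref{equisum}.
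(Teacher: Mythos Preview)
Your proof is correct and follows essentially the same approach as the paper: a double inclusion using Lemma~\ref{equisum} together with the containment $F(F^{-1}\cP)\subseteq\cP$ and closure of $\cP$ under shifts and summands. The only cosmetic difference is that for the inclusion $\cP\subseteq\{M:M\oplus M[1]\in F(\cP')\}$ the paper argues directly (your ``alternative'') rather than citing Lemma~\ref{open}.
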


\begin{proof} First of all, we always have $F(F^{-1}\cP)\subseteq\cP$ for any $\cP\in Th(\cT)$. Now, pick any $M\in\cP$, we have $M\oplus M[1]\cong F(M')$ for some $M'\in\cT'$ by Lemma \ref{equisum}, so $F(M')\in\cP$ as $\cP$ is a closed under finite direct sums and shifts, i.e., $M'\in F^{-1}\cP=\cP'$ and this means $M\oplus M[1]\cong F(M')\in F(\cP')$. Conversely, $M\oplus M[1]\in F(\cP')=F(F^{-1}\cP)\subseteq\cP$ implies $M\in\cP$ as $\cP$ is closed under direct summands. \end{proof}

\begin{thm} Let $F:\cT'\xra{}\cT$ be a triangle equivalence up to direct summands, then $Esp\,F: Esp_D(\cT)\xra{\cong} Esp_{F^{-1}D}(\cT')$ is a homeomorphism. \end{thm}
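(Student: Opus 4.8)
The plan is to show $Esp\,F$ is a bijection with continuous inverse, using the lemmas already established. Continuity of $Esp\,F$ is Lemma \ref{cts}, so the bulk of the work is to exhibit an inverse map and check it is continuous. Define $G: Esp_{F^{-1}D}(\cT')\xra{} Esp_D(\cT)$ by $G(\cP'):=\{M\in\cT: M\oplus M[1]\in F(\cP')\}$ on those $\cP'$ of the form $F^{-1}\cP$; one should first note that every point of $Esp_{F^{-1}D}(\cT')$ is indeed of this form (by the very definition of $F^{-1}D$), so $G$ is defined on the whole space.

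Next I would verify $G$ and $Esp\,F$ are mutually inverse. That $G\circ(Esp\,F)=\mathrm{id}$ is exactly Lemma \ref{inj}: it says $\cP = \{M\in\cT: M\oplus M[1]\in F(F^{-1}\cP)\} = G(F^{-1}\cP)$ for $\cP\in D$. For the other composite, take $\cP'\in F^{-1}D$, write $\cP'=F^{-1}\cP$ with $\cP\in D$, and compute $(Esp\,F)(G(\cP')) = F^{-1}(G(\cP')) = F^{-1}\cP = \cP'$ using the previous identity $G(\cP')=\cP$. One small point to address along the way: $G(\cP')$ really does land in $D$ (not merely in $Th(\cT)$), which follows because $G(\cP')=\cP\in D$; and more conceptually, Lemma \ref{open} guarantees $G$ is the honest set-theoretic inverse of $\cP\mapsto F^{-1}\cP$ on objects, since $M\in\cP \iff M'\in F^{-1}\cP \iff M\in G(F^{-1}\cP)$.

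Finally I would check $G$ is continuous, which by Lemma \ref{blem}(1) it suffices to do on the open basis $\{U_D(N)\}_{N\in\cT}$. Since $G$ is a bijection inverse to the continuous map $Esp\,F$, continuity of $G$ is equivalent to $Esp\,F$ being an open map; and $(Esp\,F)(U_D(N)) = (Esp\,F)\big((Esp\,F)^{-1}(U_{F^{-1}D}(M))\big)$ whenever $N\cong F(M)$, which by the surjectivity established above equals $U_{F^{-1}D}(M)$, an open set. By Lemma \ref{equisum} every object $N\in\cT$ satisfies $N\oplus N[1]\cong F(M')$ for some $M'$, and $U_D(N\oplus N[1]) = U_D(N)$ since $N\oplus N[1]\in\cP \iff N\in\cP$ for $\cP$ thick; so every basic open set of $Esp_D(\cT)$ has image a basic open set, and $Esp\,F$ is open. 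Hence $Esp\,F$ is a continuous open bijection, i.e., a homeomorphism.

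The main obstacle is bookkeeping rather than any deep idea: one must be careful that the reconstruction formula $M\mapsto M\oplus M[1]$ (forced by Lemma \ref{equisum}, since a bare $M$ need not be in $F(\cT')$) interacts correctly with thickness at each step — closure under shifts to push $F(M')\in\cP$ down, and closure under summands to recover $M$ — and that passing between $\cP$ and $\cP'=F^{-1}\cP$ does not leave the chosen subsets $D$ and $F^{-1}D$. All of these are precisely the content of Lemmas \ref{equisum}, \ref{open}, and \ref{inj}, so the proof is essentially an assembly of those three facts plus Lemma \ref{cts}.
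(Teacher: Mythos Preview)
Your proposal is correct and follows essentially the same route as the paper: continuity from Lemma~\ref{cts}, bijectivity from Lemma~\ref{inj} (with surjectivity immediate from the definition of $F^{-1}D$), and openness via the $M\oplus M[1]\cong F(M')$ trick of Lemma~\ref{equisum} combined with Lemma~\ref{open}. The paper's proof is just a terser version of yours, omitting the explicit description of the inverse $G$ and going straight to ``continuous bijection that is open''; your extra bookkeeping about $G$ landing in $D$ and the identity $U_D(N\oplus N[1])=U_D(N)$ is exactly what the paper leaves implicit.
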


\begin{proof} Lemma \ref{cts} and \ref{inj} tell us $Esp\,F$ is continuous and bijective (surjectivity is obvious by construction). It remains to show (for example) $Esp\,F$ is open and this is Lemma \ref{open}. Indeed, for any $M\in\cT$, there is some $M'\in\cT'$ such that $M\oplus M[1]\cong F(M')$ so $(Esp\,F)(U_D(M))=U_{F^{-1}D}(M')$. \end{proof}

Let $\cT$ be a triangulated category. The idempotent completion of $\cT$, denoted by $\cT^{\natural}$, is a triangulated category whose objects are pairs $(M,e)$, where $M$ is an object of $\cT$ and $e:M\xra{}M$ is an idempotent. The shift functor in $\cT^{\natural}$ is $(M,e)[1]:=(M[1],e[1])$ and direct sum is given by $(M,e_M)\oplus (N,e_N):=(M\oplus N, e_M\oplus e_N)$. There is a natural fully faithful triangulated functor $\iota: \cT\xra{}\cT^{\natural}$ which sends an object $M$ to $(M,1_M)$ so we treat $\cT$ as a triangulated subcategory of $\cT^{\natural}$, and $\iota$ is a triangle equivalence up to direct summands. By above, we have

\begin{cor} \label{idem} There is a homeomorphism $Esp\,(\iota): Esp_D(\cT^{\natural})\xra{\cong} Esp_{\iota^{-1}D}(\cT)$ for any $D\subseteq Th(\cT^{\natural})$. \end{cor}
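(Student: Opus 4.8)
The statement to prove is Corollary \ref{idem}: that $Esp\,(\iota): Esp_D(\cT^{\natural})\xra{\cong} Esp_{\iota^{-1}D}(\cT)$ is a homeomorphism for any $D\subseteq Th(\cT^{\natural})$. The plan is simply to invoke the preceding Theorem with $F = \iota$. For this we need three things: (i) that $\iota\colon \cT\to\cT^{\natural}$ is a triangulated functor; (ii) that it is a triangle equivalence up to direct summands in the sense defined just above the Theorem, i.e.\ that $\iota$ is fully faithful and every object of $\cT^{\natural}$ is a direct summand of something in $\iota(\cT)$; and (iii) that the roles of $\cT'$ and $\cT$ in the Theorem are matched correctly — here $\cT' = \cT$ (the smaller category) and the ambient category is $\cT^{\natural}$ — so that $Esp\,F\colon Esp_D(\cT^\natural)\to Esp_{\iota^{-1}D}(\cT)$ is exactly the map in the Corollary.

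First I would recall/assert the standard structure of $\cT^{\natural}$: objects are pairs $(M,e)$ with $e$ idempotent, with the triangulated structure as recalled in the excerpt, and $\iota(M) = (M,1_M)$. The functor $\iota$ is well known to be a triangulated functor and fully faithful (this is part of Balmer–Schlichting's construction of the idempotent completion; I would cite it or state it as ``well known''). Next, for the ``up to direct summands'' property: given any $(M,e)\in\cT^{\natural}$, the complementary idempotent $1-e$ gives $(M,e)\oplus(M,1-e) \cong (M,1_M) = \iota(M)\in\iota(\cT)$, so $(M,e)$ is a direct summand of an object in the image of $\iota$. Hence $\iota$ is a triangle equivalence up to direct summands. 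With these two observations in hand, the Theorem applies verbatim with $F=\iota$, $\cT' = \cT$, and ambient category $\cT^{\natural}$, yielding that $Esp\,(\iota)\colon Esp_D(\cT^{\natural}) \to Esp_{\iota^{-1}D}(\cT)$ is a homeomorphism.

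I expect no serious obstacle here — this is a direct specialization. The one point to be slightly careful about is bookkeeping of which category plays the role of $\cT'$ versus $\cT$ in the Theorem: in the Theorem the functor goes $F\colon\cT'\to\cT$ and the homeomorphism is $Esp\,F\colon Esp_D(\cT)\to Esp_{F^{-1}D}(\cT')$, i.e.\ the spectrum of the \emph{target} category downstairs maps to the spectrum of the \emph{source} category. Applying this with the target being $\cT^{\natural}$ and the source being $\cT$ gives precisely $Esp\,(\iota)\colon Esp_D(\cT^{\natural})\to Esp_{\iota^{-1}D}(\cT)$, as claimed. I would also remark that Lemma \ref{equisum}, used inside the Theorem's proof, applies since $\iota$ is a triangle equivalence up to direct summands, so in fact $(M,e)\oplus(M,e)[1]\in\iota(\cT)$ for every $(M,e)$; but this is already subsumed in the Theorem and need not be re-derived. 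Thus the proof is a one-line appeal to the Theorem after verifying that $\iota$ satisfies its hypotheses.
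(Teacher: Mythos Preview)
Your proposal is correct and matches the paper's approach exactly: the paper states in the paragraph immediately preceding the Corollary that $\iota$ is fully faithful and a triangle equivalence up to direct summands, and then the Corollary follows ``by above'', i.e.\ by direct application of the Theorem with $F=\iota$. Your bookkeeping of source/target roles is also correct.
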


For any $C\subseteq Th(\cT)$, define $\cP^{\natural}:=\{(M,e)\in\cT^{\natural}: (M,e)\oplus (M,e)[1]\in\cP\}$ for any $\cP\in C$ and $C^{\natural}:=\{\cP^{\natural}: \cP\in C\}$. It is straightforward that

\begin{lem} \label{id} $\cP^{\natural}\in Th(\cT^{\natural})$ and $\cP=\iota^{-1}\cP^{\natural}=\cP^{\natural}\cap\cT$. \end{lem}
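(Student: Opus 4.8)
The plan is to verify the two asserted facts about $\cP^{\natural}$ directly from the definitions, using that $\cP \in Th(\cT)$ and the explicit description of direct sums and shifts in $\cT^{\natural}$. The statement has two parts: (i) $\cP^{\natural} \in Th(\cT^{\natural})$, and (ii) $\cP = \iota^{-1}\cP^{\natural} = \cP^{\natural} \cap \cT$. I would first record the elementary observation that, writing $\widetilde{(M,e)} := (M,e) \oplus (M,e)[1]$, the assignment $(M,e) \mapsto \widetilde{(M,e)}$ commutes with finite direct sums and with the shift (up to the obvious canonical isomorphisms in $\cT^{\natural}$), so that membership conditions phrased via $\widetilde{(\cdot)}$ inherit closure properties from $\cP$.

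For part (i): $\cP^{\natural}$ is nonempty since $0 = (0,0)$ lands in it ($\widetilde{0} = 0 \in \cP$). It is additive: if $(M,e),(N,f) \in \cP^{\natural}$, then $\widetilde{(M,e)\oplus(N,f)} \cong \widetilde{(M,e)} \oplus \widetilde{(N,f)} \in \cP$ since $\cP$ is closed under finite direct sums. It is closed under shifts by the compatibility above together with closure of $\cP$ under shifts (a thick subcategory of a triangulated category is a triangulated subcategory, hence shift-closed). For direct summands: suppose $(M,e) \oplus (N,f) \in \cP^{\natural}$, i.e. $\widetilde{(M,e)} \oplus \widetilde{(N,f)} \in \cP$; since $\cP$ is closed under summands, $\widetilde{(M,e)} \in \cP$, so $(M,e) \in \cP^{\natural}$. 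For the 2-out-of-3 property, I would use that any triangle $L \to M \to N \to L[1]$ in $\cT^{\natural}$ yields, after applying $\widetilde{(\cdot)}$, a triangle $\widetilde{L} \to \widetilde{M} \to \widetilde{N} \to \widetilde{L}[1]$ in $\cT$ (this is the octahedral/rotation bookkeeping: $\widetilde{X}$ is the cone-type combination $X \oplus X[1]$, and direct sums of triangles are triangles); then if two of $\widetilde{L},\widetilde{M},\widetilde{N}$ lie in $\cP$, so does the third by thickness of $\cP$, giving the corresponding statement for $L,M,N \in \cP^{\natural}$. Alternatively, and perhaps more cleanly, I would invoke Lemma \ref{cts}–\ref{id}'s predecessor Lemma \ref{inj}/\ref{open} machinery: $\cP^{\natural}$ is exactly $\iota^{-1}$ of nothing quite, so instead I note $\cP^{\natural} = \{(M,e) : \widetilde{(M,e)} \in \cP\}$ and that $\widetilde{(\cdot)}$ is (isomorphic to) the restriction along a triangulated functor composed with a doubling, hence preimages of thick subcategories are thick.

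For part (ii): the equality $\iota^{-1}\cP^{\natural} = \cP^{\natural} \cap \cT$ is immediate since $\iota$ is the inclusion of $\cT$ as a full subcategory. To show $\cP = \cP^{\natural} \cap \cT$: if $M \in \cP$ (identified with $(M,1_M)$), then $\widetilde{(M,1_M)} \cong M \oplus M[1] \in \cP$ as $\cP$ is closed under finite direct sums and shifts, so $(M,1_M) \in \cP^{\natural}$; conversely if $(M,1_M) \in \cP^{\natural}$ then $M \oplus M[1] \in \cP$, and closure of $\cP$ under direct summands gives $M \in \cP$. This is essentially the content of Lemma \ref{open} applied to the triangle equivalence up to direct summands $\iota$.

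The main obstacle I anticipate is the 2-out-of-3 property in part (i): one must be careful that applying the doubling $\widetilde{(\cdot)} = (-) \oplus (-)[1]$ to a conflation in $\cT^{\natural}$ actually produces a conflation in $\cT$ (not merely in $\cT^{\natural}$) whose terms are genuine objects of $\cT$ — but this holds because $\widetilde{(M,e)} = (M,e) \oplus (M[1],e[1])$ has underlying complex a summand-retract of $M \oplus M[1]$, and in fact for the equivalence-up-to-summands functor $\iota$ every object of the form $\widetilde{(M,e)}$ lies in $\iota(\cT)$ by Lemma \ref{equisum} (applied in $\cT^{\natural}$, noting $(M,e)\oplus(M,e)[1]$ is a summand of $(M \oplus M[1]) \oplus (M\oplus M[1])[1] \in \iota(\cT)$ so equals some $\iota(P)$ only after possibly another doubling — so the cleanest route is to avoid claiming $\widetilde{(M,e)} \in \iota(\cT)$ and instead just use that $\cP$, being thick in $\cT$, pulls back along the functor $(M,e) \mapsto M \oplus M[1]$, viewed as a triangulated functor $\cT^{\natural} \to \cT$ up to the standard identifications, to a thick subcategory, exactly as in the proof that $F^{-1}\cP \in Th(\cT')$ stated just before Lemma \ref{cts}). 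I would therefore phrase the whole proof of (i) as: the assignment $\Phi: (M,e) \mapsto M \oplus M[1]$ extends to a triangulated functor $\cT^{\natural} \to \cT$, $\cP^{\natural} = \Phi^{-1}(\cP)$, and preimages of thick subcategories under triangulated functors are thick; then (ii) is a one-line check as above.
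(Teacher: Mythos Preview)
Your direct checks for closure under sums, summands, and shifts, and your argument for part (ii), are correct and match the paper's. For the 2-out-of-3 property the paper does what you first outline: take a triangle in $\cT^{\natural}$, sum it with its shift to obtain a triangle with terms $\widetilde{(L,e_L)},\widetilde{(M,e_M)},\widetilde{(N,e_N)}$; by hypothesis two of these lie in $\cP\subseteq\cT$, so the triangle is already a triangle in $\cT$ (as $\iota(\cT)$ is a full triangulated subcategory of $\cT^{\natural}$), and thickness of $\cP$ in $\cT$ gives the third term. Your instinct to invoke Lemma~\ref{equisum}, applied to $\iota:\cT\to\cT^{\natural}$, to see that every $\widetilde{(M,e)}$ lies in $\iota(\cT)$ is also correct and yields a slightly stronger fact than the paper actually uses; no ``further doubling'' is needed, and you should have trusted that invocation rather than backing away from it.

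The ``cleanest route'' you settle on at the end, however, is wrong as written. The assignment $(M,e)\mapsto M\oplus M[1]$ does not define a functor $\cT^{\natural}\to\cT$: the isomorphic objects $(M,e)$ and $(M\oplus L,\,e\oplus 0)$ (for arbitrary $L\in\cT$) would be sent to the non-isomorphic objects $M\oplus M[1]$ and $M\oplus L\oplus M[1]\oplus L[1]$. Even at the level of objects the preimage is off: your formula gives $\{(M,e):M\in\cP\}$, whereas $\cP^{\natural}=\{(M,e):X\in\cP\}$ with $\iota(X)\cong(M,e)\oplus(M,e)[1]$ only a summand of $\iota(M\oplus M[1])$; these sets differ whenever $X\in\cP$ but $M\notin\cP$. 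The functor you want is the one characterised by $\iota\circ\Phi\cong(-)\oplus(-)[1]$, which exists precisely by Lemma~\ref{equisum}; with that corrected $\Phi$ one does have $\cP^{\natural}=\Phi^{-1}(\cP)$, and your packaging becomes a legitimate alternative to the paper's bare-hands verification.
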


\begin{proof} For any $(M,e)\in\cP^{\natural}$ we have $(M,e)[1]\oplus (M,e)[2]\cong ((M,e)\oplus (M,e)[1])[1]\in\cP$ so $(M,e)[1]\in\cP^{\natural}$. Next, for any distinguished triangle $(L,e_L)\xra{}(M,e_M)\xra{}(N,e_N)\xra{}(L,e_L)[1]$ in $\cT^{\natural}$ with $(L,e_L), (N,e_N)\in \cP^{\natural}$, then the triangle $(L,e_L)[1]\xra{}(M,e_M)[1]\xra{}(N,e_N)[1] \xra{}((L,e_L)[1])[1]$ is also distinguished in $\cT^{\natural}$ so \[(L,e_L)\oplus (L,e_L)[1]\xra{}(M,e_M)\oplus (M,e_M)[1]\xra{}(N,e_N)\oplus (N,e_N)[1]\rightsquigarrow\] a distinguished triangle in $\cT^{\natural}$ with $(L,e_L)\oplus (L,e_L)[1]$ and $(N,e_N)\oplus (N,e_N)[1]$ in $\cP$ (hence in $\cT$) so it's in fact a distinguished triangle in $\cT$ with the first and third term in $\cP$. This implies $(M,e_M)\oplus (M,e_M)[1]\in\cP$ so $(M,e_M)\in\cP^{\natural}$. Finally, for any $(M,e_M)\oplus (N,e_N)\in\cP^{\natural}$, that is, $(M\oplus N,e_M\oplus e_N)\oplus (M\oplus N,e_M\oplus e_N)[1]\in\cP$ we have $(M\oplus M[1],e_M\oplus e_{M[1]}), (N\oplus N[1], e_N\oplus e_{N[1]})\in\cP$ so $\cP^{\natural}$ is closed under direct summands.

Now, $(X\oplus X[1],1_{X\oplus X[1]})\in\cP$ for any $X\in\cP$ as $\cP$ is closed under finite direct sums and shifts so $\cP\subseteq\cP^{\natural}\cap\cT$. Conversely, suppose $(X,1_X)\in\cP^{\natural}\cap\cT$, then $(X\oplus X[1],1_{X\oplus X[1]})\cong (X,1_X)\oplus (X,1_X)[1]\in\cP$ and this gives $X\oplus X[1]\in\cP$ so in turn $X\in\cP$ as $\cP$ is closed under direct summands. \end{proof}

Therefore,

\begin{cor} There is a homeomorphism $Esp(\iota): Esp_{C^{\natural}}(\cT^{\natural})\xra{\cong} Esp_C(\cT)$. \end{cor}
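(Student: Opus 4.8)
The plan is to simply combine Corollary \ref{idem} with Lemma \ref{id}, so essentially no new work is needed. First I would invoke Corollary \ref{idem} with the choice $D=C^{\natural}\subseteq Th(\cT^{\natural})$; this is legitimate because Lemma \ref{id} guarantees $\cP^{\natural}\in Th(\cT^{\natural})$ for every $\cP\in C$, hence $C^{\natural}\subseteq Th(\cT^{\natural})$. Corollary \ref{idem} then provides a homeomorphism $Esp(\iota):Esp_{C^{\natural}}(\cT^{\natural})\xra{\cong}Esp_{\iota^{-1}(C^{\natural})}(\cT)$.

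Next I would identify the target of this homeomorphism with $Esp_C(\cT)$. By definition $\iota^{-1}(C^{\natural})=\{\iota^{-1}\cP^{\natural}:\cP\in C\}$, and Lemma \ref{id} says $\iota^{-1}\cP^{\natural}=\cP$ for each $\cP\in C$. Therefore $\iota^{-1}(C^{\natural})=\{\cP:\cP\in C\}=C$ as subsets of $Th(\cT)$, and the topological space $Esp_{\iota^{-1}(C^{\natural})}(\cT)$ is literally $Esp_C(\cT)$. Substituting this equality into the homeomorphism from the previous paragraph yields $Esp(\iota):Esp_{C^{\natural}}(\cT^{\natural})\xra{\cong}Esp_C(\cT)$, which is exactly the claim.

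There is essentially no obstacle here: the only thing to be mildly careful about is that $\iota^{-1}(C^{\natural})$ really equals $C$ as a set (not merely that there is a bijection between them), so that the two spectra are the \emph{same} space rather than just homeomorphic via some auxiliary identification — but this is immediate from the pointwise identity $\iota^{-1}\cP^{\natural}=\cP$ in Lemma \ref{id}. One might also remark, for completeness, that the inverse map sends $\cQ\in Esp_C(\cT)$ to $\cQ^{\natural}$, consistent with the description of $Esp(\iota)^{-1}$ implicit in the proof of Corollary \ref{idem}, but this is not strictly needed for the statement.

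\begin{proof}
By Lemma \ref{id} we have $\cP^{\natural}\in Th(\cT^{\natural})$ for every $\cP\in C$, so $C^{\natural}\subseteq Th(\cT^{\natural})$ and Corollary \ref{idem} applies with $D=C^{\natural}$, giving a homeomorphism $Esp(\iota):Esp_{C^{\natural}}(\cT^{\natural})\xra{\cong}Esp_{\iota^{-1}(C^{\natural})}(\cT)$. But $\iota^{-1}(C^{\natural})=\{\iota^{-1}\cP^{\natural}:\cP\in C\}=\{\cP:\cP\in C\}=C$ by the identity $\iota^{-1}\cP^{\natural}=\cP$ of Lemma \ref{id}. Hence $Esp_{\iota^{-1}(C^{\natural})}(\cT)=Esp_C(\cT)$ and $Esp(\iota):Esp_{C^{\natural}}(\cT^{\natural})\xra{\cong}Esp_C(\cT)$ is a homeomorphism.
\end{proof}
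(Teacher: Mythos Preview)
Your proposal is correct and follows essentially the same approach as the paper's proof: apply Corollary \ref{idem} with $D=C^{\natural}$ and use Lemma \ref{id} to identify $\iota^{-1}C^{\natural}=C$. Your write-up is simply a more detailed version of the paper's one-line argument.
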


\begin{proof} By Corollary \ref{idem}, take $D=C^{\natural}$ and $C=\iota^{-1}C^{\natural}$ by Lemma \ref{id}. \end{proof}

\vp{0.3}

\section{Criteria}

We discuss soberness and spectralness of the spectra and provide criterion respectively in this section. A subset $C\subseteq as(\cE)$ is fixed throughout the section.

\subsection{Criterion for soberness} \label{scri} The criterion in this subsection is motivated by Proposition 3.5 \cite{rowe2024noncommutative}. Let $Z\subseteq Esp_C(\cE)$ be a nonempty closed subset, define $\cP(Z)=\{E\in\cE: U_C(E)\cap Z\neq\emptyset\}$, we have

\begin{lem} Let $Z\subseteq Esp_C(\cE)$ be a nonempty irreducible closed subset. Then

\begin{enumerate}

\item $\cP(Z)\in as(\cE)$ if $C\subseteq as(\cE)$.
\item $\cP(Z)\in Th(\cE)$ if $C\subseteq Th(\cE)$.

\end{enumerate}
\end{lem}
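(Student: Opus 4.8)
The plan is to verify directly that $\cP(Z)$ satisfies the defining closure properties, translating everything through the basic open sets $U_C(E)$ and the elementary identities for $Z_C$ recorded just after the definition of $Esp_C(\cE)$. The key observation to set up first is a dictionary: for a class of objects, $U_C(\cX) \cap U_C(\cX') = U_C$ of the ``intersection'' in an appropriate sense fails in general, but what we do have cleanly is $U_C(\cX \oplus \cX') = U_C(\cX) \cup U_C(\cX')$ from $Z_C(\cX)\cup Z_C(\cX')=Z_C(\cX\oplus\cX')$. So membership $E \in \cP(Z)$ says the basic open $U_C(E)$ meets $Z$, and I would phrase each closure property as a statement about which basic opens meet the fixed nonempty irreducible closed set $Z$.

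First I would handle (1), i.e. $\cP(Z)\in as(\cE)$, which only needs closure under finite direct sums and direct summands (nonemptiness follows since $Z$ is nonempty, so $Z$ meets $Esp_C(\cE)=U_C(\cE)$; one should check some actual object works, e.g. using that $C\subseteq as(\cE)$ means every $\cP\in C$ contains $0$, so one needs a slightly more careful argument — $U_C(E)\neq\emptyset$ requires $E$ to lie in \emph{some} $\cP\in C$, and $0$ does, giving $0\in\cP(Z)$ provided $Z\neq\emptyset$ and $0\in\cP$ for all $\cP$, so $U_C(0)=C\supseteq Z$). For direct summands: if $E\oplus F\in\cP(Z)$ then $U_C(E\oplus F)\cap Z\neq\emptyset$; but $U_C(E\oplus F)=U_C(E)\cup U_C(F)$, so one of $U_C(E)\cap Z$, $U_C(F)\cap Z$ is nonempty, hence $E\in\cP(Z)$ or $F\in\cP(Z)$ — but we need \emph{both}. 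Here is where irreducibility of $Z$ is essential and where I expect the main obstacle to lie: I would argue that for $\cP\in C$ closed under direct summands, $E\oplus F\in\cP \iff (E\in\cP \text{ and } F\in\cP)$, so $U_C(E\oplus F)=U_C(E)\cap U_C(F)$ as well; thus $U_C(E)\cap Z$ and $U_C(F)\cap Z$ are two open subsets of $Z$ whose intersection is nonempty, and in particular each is individually nonempty, giving $E,F\in\cP(Z)$ directly without even invoking irreducibility. For closure under finite direct sums: if $E,F\in\cP(Z)$, then $U_C(E)\cap Z$ and $U_C(F)\cap Z$ are nonempty open subsets of the irreducible space $Z$, hence their intersection is nonempty; since $\cP$'s are closed under direct sums this intersection is contained in $U_C(E\oplus F)\cap Z$, which is therefore nonempty, so $E\oplus F\in\cP(Z)$. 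This is precisely the step that uses irreducibility.

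Next I would prove (2): assuming $C\subseteq Th(\cE)$, show $\cP(Z)$ has the 2-out-of-3 property for conflations. Given a conflation $L\to M\to N$ with two of the three objects in $\cP(Z)$, I want the third in $\cP(Z)$. The mechanism is the same: for any thick $\cP$, the 2-out-of-3 property gives implications among the conditions $L\in\cP$, $M\in\cP$, $N\in\cP$. Concretely, if $L,N\in\cP$ then $M\in\cP$, so $U_C(L)\cap U_C(N)\subseteq U_C(M)$; similarly $U_C(M)\cap U_C(N)\subseteq U_C(L)$ and $U_C(L)\cap U_C(M)\subseteq U_C(N)$. (These are exactly the dual forms of the support inclusions in Lemma \ref{blem}(3), read through $U_C(E)=Esp_C(\cE)\setminus supp_C(E)$.) Then, in each of the three cases, I take the two basic opens corresponding to the two objects known to lie in $\cP(Z)$; each meets $Z$; by irreducibility of $Z$ their intersection meets $Z$; and by the relevant inclusion that intersection lands in the basic open of the third object, so that basic open meets $Z$ too, giving the third object in $\cP(Z)$. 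Closure under direct summands for $\cP(Z)$ in the thick case is already covered by part (1) since $Th(\cE)\subseteq as(\cE)$.

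The one genuinely delicate point I would flag and be careful about is the translation $U_C(E)\cap U_C(F)\subseteq U_C(E\oplus F)$, or more precisely the equality $U_C(E\oplus F)=U_C(E)\cap U_C(F)$ when all members of $C$ are closed under finite direct sums and summands: $\cP\in U_C(E\oplus F)$ means $E\oplus F\in\cP$, which for such $\cP$ is equivalent to $E\in\cP$ and $F\in\cP$, i.e. $\cP\in U_C(E)\cap U_C(F)$. Once this identity and the three thick-subcategory inclusions are in hand, both parts of the lemma reduce to the single topological fact that in an irreducible space any two nonempty open sets intersect, applied to the traces on $Z$ of the relevant basic opens. I would present the proof by first recording this identity as a one-line observation, then disposing of (1) and (2) in parallel.
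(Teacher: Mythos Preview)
Your proposal is correct and follows essentially the same approach as the paper: both arguments hinge on the identity $U_C(E\oplus F)=U_C(E)\cap U_C(F)$ for $C\subseteq as(\cE)$, use it directly for direct summands, invoke irreducibility of $Z$ for closure under direct sums, and then handle the 2-out-of-3 property via the inclusions $U_C(L)\cap U_C(N)\subseteq U_C(M)$ (the paper phrases this as $U_C(E\oplus F)\subseteq U_C(G)$ via $G\in thick(E\oplus F)$, which is the same thing). Your explicit verification that $0\in\cP(Z)$ is a small addition the paper omits.
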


\begin{proof} Suppose $E,F\in\cP(Z)$. Then we have $E\oplus F\in\cP(Z)$. If not, $(U_C(E)\cap U_C(F))\cap Z=U_C(E\oplus F)\cap Z=\emptyset$ implies $U_C(E)\cap Z=\emptyset$ or $U_C(F)\cap Z=\emptyset$, as $Z$ is irreducible. To show $\cP(Z)$ is closed under direct summands, notice that $L\oplus M\in\cP$ means $U_C(L)\cap U_C(M)\cap Z=U_C(L\oplus M)\cap Z\neq\emptyset$ so $U_C(L)\cap Z\neq\emptyset$ and $U_C(M)\cap Z\neq\emptyset$, that is, $L,M\in\cP$.

Suppose $C\subseteq Th(\cE)$ and $E\xra{}F\xra{}G$ be a conflation with $E,F\in\cP(Z)$. Note that we have $G\in thick(E\oplus F)$ and this gives $U_C(E\oplus F)\subseteq U_C(G)$. Indeed, any $\cP\in U_C(E\oplus F)$ means $E\oplus F\in\cP$ and $\cP\in C\subseteq Th(\cE)$ is a thick subcategory. Hence $G\in thick(E\oplus F)\subseteq\cP$. From $U_C(E\oplus F)\cap Z\neq\emptyset$ we get $U_C(G)\cap Z\neq\emptyset$ so $G\in\cP(Z)$. The same argument also shows $E,G\in\cP(Z)$ (resp. $F,G\in\cP(Z)$) then $F\in\cP(Z)$ (resp. $E\in\cP(Z)$) so $\cP(Z)$ has the two-out-of-three property for conflations. \end{proof}

Below is a characterization of soberness of the space $Esp_C(\cE)$, for any $C\subseteq as(\cE)$.

\begin{thm} \label{sober} Let $C\subseteq as(\cE)$ be a subset. Then $Esp_C(\cE)$ is a sober space if and only if $\cP(Z)\in C$ for any nonempty irreducible closed subset $Z\subseteq Esp_C(\cE)$. In fact, when this holds, we have $Z=\overline{\{\cP(Z)\}}$. In particular, $Esp_{as(\cE)}(\cE)$ and $Esp_{Th(\cE)}(\cE)$ are always sober. \end{thm}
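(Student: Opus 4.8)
The plan is to prove the characterization by splitting into the two implications and then verifying the special cases. For the ``only if'' direction, suppose $Esp_C(\cE)$ is sober and let $Z$ be a nonempty irreducible closed subset. By soberness there is a unique generic point $\cQ\in C$ with $Z=\overline{\{\cQ\}}$; using Proposition~\ref{t0} this means $Z=\{\cP\in C:\cP\subseteq\cQ\}$. The task is then to identify $\cQ$ with $\cP(Z)=\{E\in\cE:U_C(E)\cap Z\neq\emptyset\}$. First I would show $\cP(Z)\subseteq\cQ$ fails in general only to realize the correct statement is the reverse-flavored one: an object $E$ lies in $\cP(Z)$ iff some $\cP\in Z$ contains $E$, and since every such $\cP\subseteq\cQ$ one direction needs care. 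The cleanest route is to show directly that $\cP(Z)=\cQ$: if $E\in\cQ$ then $\cQ\in U_C(E)\cap Z$ so $E\in\cP(Z)$; conversely if $E\in\cP(Z)$ there is $\cP\in Z=\overline{\{\cQ\}}$ with $E\in\cP$, and I want to conclude $E\in\cQ$. This last step is where irreducibility is really used — I would argue that if $E\notin\cQ$ then $\cQ\in U_C(E)$, so $U_C(E)$ is an open neighborhood of the generic point $\cQ$, hence $U_C(E)\cap Z$ is a nonempty open subset of the irreducible space $Z$ and thus dense in $Z$; but then... actually the clean statement is that $\cP(Z)=\cQ$ follows by noting $\{E:E\in\cP \text{ for some }\cP\in Z\}$ need not be thick, so instead I would verify $\cP(Z)=\cQ$ by the two containments above where the converse uses: $\cP\in\overline{\{\cQ\}}$ forces $\cP\subseteq\cQ$ by Proposition~\ref{t0}, so $E\in\cP\subseteq\cQ$ directly. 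Hence $\cP(Z)=\cQ\in C$.

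For the ``if'' direction, assume $\cP(Z)\in C$ for every nonempty irreducible closed $Z$. I would show $Z=\overline{\{\cP(Z)\}}$, which simultaneously gives the existence of a generic point; uniqueness then comes for free from Proposition~\ref{t0} (the space is already $T_0$). To prove $Z=\overline{\{\cP(Z)\}}$, use Proposition~\ref{closure}: $\overline{\{\cP(Z)\}}=\bigcap_{\cP(Z)\in supp_C(E)}supp_C(E)$, and recall $\cP(Z)\in supp_C(E)$ means $E\notin\cP(Z)$, i.e. $U_C(E)\cap Z=\emptyset$, i.e. $Z\subseteq supp_C(E)$. So $\overline{\{\cP(Z)\}}=\bigcap_{Z\subseteq supp_C(E)}supp_C(E)=\overline{Z}=Z$, the second equality again by Proposition~\ref{closure} and the last because $Z$ is closed. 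This shows $\cP(Z)$ is a generic point of $Z$, so every irreducible closed set has a (unique) generic point, i.e. $Esp_C(\cE)$ is sober.

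Finally, for the special cases $C=as(\cE)$ and $C=Th(\cE)$: by the preceding Lemma, $\cP(Z)\in as(\cE)$ whenever $C\subseteq as(\cE)$ and $\cP(Z)\in Th(\cE)$ whenever $C\subseteq Th(\cE)$, for any nonempty irreducible closed $Z$. Taking $C=as(\cE)$, the condition $\cP(Z)\in C$ is automatic, so $Esp_{as(\cE)}(\cE)$ is sober; taking $C=Th(\cE)$ likewise gives $\cP(Z)\in Th(\cE)=C$, so $Esp_{Th(\cE)}(\cE)$ is sober.

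The main obstacle I anticipate is the converse containment $\cP(Z)\subseteq\cQ$ in the ``only if'' direction — making sure the identification of the generic point with $\cP(Z)$ is airtight — but as indicated this dissolves once one invokes Proposition~\ref{t0} to turn membership in $\overline{\{\cQ\}}$ into the inclusion $\cP\subseteq\cQ$. Everything else is a matter of unwinding the definitions of $U_C$, $supp_C$, and the closure operator via Propositions~\ref{closure} and \ref{t0}.
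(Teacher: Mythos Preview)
Your proposal is correct and follows essentially the same route as the paper: the ``only if'' direction identifies the generic point $\cQ$ with $\cP(Z)$ via Proposition~\ref{t0} exactly as the paper does, and the special cases are dispatched the same way using the preceding Lemma. Your ``if'' direction is in fact packaged a bit more cleanly than the paper's --- you observe directly that $\overline{\{\cP(Z)\}}$ and $\overline{Z}$ reduce to the \emph{same} intersection $\bigcap_{Z\subseteq supp_C(E)}supp_C(E)$ via Proposition~\ref{closure}, whereas the paper verifies the two inclusions $Z\subseteq\overline{\{\cP(Z)\}}$ and $\cP(Z)\in Z$ separately; both arguments rest on the same ingredients. (The brief aside ``if $E\notin\cQ$ then $\cQ\in U_C(E)$'' in your abandoned attempt has the roles of $U_C$ and $supp_C$ swapped, but since you discard that line anyway it does no harm.)
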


\begin{proof} $(\Leftarrow)$ We will show $Z=\overline{\{\cP(Z)\}}$ and this means every irreducible closed subset has a generic point. Because $Esp_C(\cE)$ is $T_0$ by Proposition \ref{t0}, the generic point is unique, and this means $Esp_C(\cE)$ is sober. Indeed, pick any $\cQ\in Z$ and $E\in\cQ$ an object, we have $\cQ\in U_C(E)\cap Z$ so $U_C(E)\cap Z\neq\emptyset$. By definition this means $E\in\cP(Z)$, so in fact $\cQ\subseteq\cP(Z)$ and hence $\cQ\in\overline{\{\cP(Z)\}}$ by Proposition \ref{t0}. This shows $Z\subseteq\overline{\{\cP(Z)\}}$. For the other direction, it's enough to show $\cP(Z)\in Z$. Note that there are some objects $E\in\cE$ such that $Z\subseteq supp_C(E)$ by Proposition \ref{closure}. This gives $U_C(E)\cap Z=\emptyset$, so $E\notin\cP(Z)$. Since $\cP(Z)\in C$ by assumption, together gives $\cP(Z)\in supp_C(E)$ for all such objects $E$. Hence, \[\cP(Z)\in \bigcap_{Z\subseteq supp_C(E)} supp_C(E)=\overline{Z}=Z \] $(\Rightarrow)$ Let $Z$ be a nonempty irreducible closed subset of $Esp_C(\cE)$, then $Z=\overline{\{\cP\}}$ for some $\cP$ (note that $\cP\in Z\subseteq C$). We will show $\cP=\cP(Z)$ so $\cP(Z)\in C$. Indeed, for any $E\in\cP(Z)$, that is, $U_C(E)\cap Z\neq\emptyset$, we have some $\cQ\in Z$ such that $\cQ\in U_C(E)$, so $E\in\cQ\in Z=\overline{\{\cP\}}$. By Proposition \ref{t0} there is some $\cQ'\subseteq\cP$ such that $E\in\cQ'$ so $\cP(Z)\subseteq\cP$. Conversely, for any $E\in\cP$, we have $U_C(E)\cap Z=U_C(E)\cap\overline{\{\cP\}}\neq\emptyset$ so $E\in\cP(Z)$ by Lemma \ref{top}. \end{proof}

The above is a generalization of Proposition 3.5 \cite{rowe2024noncommutative} for the noncommutative Balmer spectrum of monoidal triangulated categories.

\subsection{Criterion for spectralness} We provide two criteria for $Esp_C(\cE)$ to be a spectral space. The first is an immediate consequence of the soberness criterion discussed last subsection. To be precise, assume $Esp_C(\cE)$ is Noetherian, we can give a characterization of spectralness of the space $Esp_C(\cE)$:

\begin{thm} (cf. Theorem 3.7 \cite{rowe2024noncommutative}) \label{noethspectral} If $Esp_C(\cE)$ is Noetherian, then it is a spectral space if and only if it's sober, i.e., by Theorem \ref{sober}, if and only if $\cP(Z)\in C$ for any nonempty irreducible closed subset $Z\subseteq Esp_C(\cE)$. \end{thm}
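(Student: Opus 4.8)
The plan is to prove this as a quick corollary of the soberness criterion (Theorem~\ref{sober}) together with Hochster's characterization of spectral spaces. Recall that a space is spectral if and only if it is quasi-compact, $T_0$, sober, and admits a basis of quasi-compact open sets that is closed under finite intersections. We already know from Proposition~\ref{t0} that $Esp_C(\cE)$ is $T_0$, and from Lemma~\ref{blem}(1) that $\{U_C(E)\}_{E\in\cE}$ is an open basis; moreover this basis is closed under finite intersections since $U_C(E)\cap U_C(F)=U_C(E\oplus F)$. So the entire question reduces to the three remaining conditions: quasi-compactness of the whole space, quasi-compactness of the basic opens, and soberness.

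First I would observe that the Noetherian hypothesis does all the work for quasi-compactness: in a Noetherian space \emph{every} subset is quasi-compact, so in particular $Esp_C(\cE)$ itself and every basic open $U_C(E)$ are quasi-compact, with no assumption on $C$ beyond $C\subseteq as(\cE)$. Combined with the two bullet points from the previous paragraph, this shows that for a Noetherian $Esp_C(\cE)$, spectralness is \emph{equivalent} to soberness. Then I would simply invoke Theorem~\ref{sober}, which characterizes soberness of $Esp_C(\cE)$ by the condition that $\cP(Z)\in C$ for every nonempty irreducible closed $Z$. Chaining the two equivalences gives exactly the statement. The reverse direction (spectral $\Rightarrow$ sober) is immediate since every spectral space is sober by definition/Hochster.

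There is essentially no obstacle here, which is appropriate for a result flagged as "an immediate consequence." The only point requiring a moment's care is making sure the basic open sets $U_C(E)$ are genuinely quasi-compact \emph{as subspaces}, but this is automatic in a Noetherian space (a space is Noetherian iff all its open subsets are quasi-compact, and subspaces of Noetherian spaces are Noetherian). I would write the proof in two or three sentences: cite Proposition~\ref{t0} for $T_0$, Lemma~\ref{blem}(1) plus the identity $U_C(E)\cap U_C(F)=U_C(E\oplus F)$ for the intersection-stable basis, note that Noetherianness yields quasi-compactness everywhere, conclude that spectral $\iff$ sober, and finish by quoting Theorem~\ref{sober} for the translation of soberness into the stated condition on $\cP(Z)$.

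One should also double-check the edge case where $Esp_C(\cE)$ is empty (for instance $C=\emptyset$): the empty space is vacuously spectral, sober, and the condition on irreducible closed subsets holds vacuously, so the biconditional is still correct. With that noted, the proof is complete.
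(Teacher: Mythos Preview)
Your proposal is correct and is essentially the same approach as the paper's: the paper's own proof is the single sentence ``Exactly like the proof in \cite{rowe2024noncommutative}, combine with Theorem~\ref{sober},'' and what you have written simply unpacks that deferred argument via Hochster's characterization (using Proposition~\ref{t0}, Lemma~\ref{blem}(1), the identity $U_C(E)\cap U_C(F)=U_C(E\oplus F)$, and the Noetherian hypothesis for quasi-compactness). Your treatment is in fact more detailed than the paper's, and the edge-case check is a nice touch.
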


\begin{proof} Exactly like the proof in \cite{rowe2024noncommutative}, combine with Theorem \ref{sober}. \end{proof}

This gives a generalization of Theorem 3.7 \cite{rowe2024noncommutative}. Next, we start our discussion about the second spectralness criterion, recall

\begin{defn} A nonempty collection $\cF$ of subsets of a given set $X$ is a \textbf{filter} if \begin{enumerate} \item $\emptyset\notin\cF$; \item $Y\cap Z\in\cF$ if $Y, Z\in\cF$; \item $Z\in\cF$ and $Z\subseteq Y\subseteq X$ implies $Y\in\cF$. \end{enumerate} A filter $\cF$ is an \textbf{ultrafilter} if for each subset $Y$ of $X$, we have either $Y\in\cF$ or $X\setminus Y\in\cF$. We denote ultrafilter by $\cU$. Note, in particular, $X\in\cU$ for any ultrafilter $\cU$. \end{defn}

\begin{thm} (Corollary 3.3 \cite{finocchiaro2014spectral}) \label{finocchiaro} Let $X$ be a topological space. Then $X$ is a spectral space if and only if $X$ is $T_0$ and there is a basis $\cB$ of $X$ such that \[\{x\in X: \forall B\in\cB, x\in B \Leftrightarrow B\in\cU\}\neq\emptyset\] for any ultrafilter $\cU$ on $X$. \end{thm}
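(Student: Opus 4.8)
The plan is to prove the equivalence by routing both directions through the classical (Hochster) characterization of spectral spaces: a space is spectral exactly when it is quasi-compact, $T_0$, sober, and has a basis of quasi-compact open sets closed under finite intersections. Ultrafilters enter as the device that manufactures the points these conditions require.

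For the forward implication I would assume $X$ spectral, hence sober, hence $T_0$, and take $\cB$ to be the family of all quasi-compact open subsets, which by Hochster is a basis closed under finite intersections. Given an ultrafilter $\cU$ on $X$, form the family $\cF:=\{B\in\cB:B\in\cU\}\cup\{X\setminus B:B\in\cB,\ B\notin\cU\}$. Because $\cU$ is a filter, every finite subfamily of $\cF$ intersects in a member of $\cU$, hence is nonempty, so $\cF$ has the finite intersection property; and every member of $\cF$ is clopen in the constructible (patch) topology of $X$, which is compact since $X$ is spectral. Compactness gives $\bigcap\cF\neq\emptyset$, and any point $x$ in the intersection satisfies $x\in B\Leftrightarrow B\in\cU$ for all $B\in\cB$, which is precisely membership in the set of the statement.

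For the converse I would first replace $\cB$ by the family of all finite intersections of its members (with $X$ adjoined as the empty intersection): this is a basis for the same topology, closed under finite intersections, and since $\cU$ is a filter a finite intersection of basic opens lies in $\cU$ iff each factor does, so the nonemptiness hypothesis is inherited. Now I would check Hochster's four conditions. $T_0$ is assumed. If $\{B_i\}_{i\in I}\subseteq\cB$ covered $X$ with no finite subcover, then $\{X\setminus B_i\}_{i}$ would have the finite intersection property, extend to an ultrafilter $\cU$ with every $B_i\notin\cU$, and a point guaranteed by the hypothesis for $\cU$ would lie in no $B_i$, a contradiction; so $X$ is quasi-compact. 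Running the same argument inside a fixed $B\in\cB$ (use the family $\{B\}\cup\{X\setminus B_i\}$) shows each $B\in\cB$, hence each quasi-compact open, is quasi-compact, giving the required basis.

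The remaining condition, soberness, is where I expect the real work. Given a nonempty irreducible closed $Z$, I would form $\cG:=\{B\in\cB:B\cap Z\neq\emptyset\}\cup\{X\setminus B:B\in\cB,\ B\cap Z=\emptyset\}$; irreducibility is exactly what makes $\cG$ have the finite intersection property, since a finite intersection of basic opens each meeting $Z$ (again basic, as $\cB$ is intersection-closed) still meets $Z$, and a point of $Z$ in it avoids every basic open disjoint from $Z$. Extending $\cG$ to an ultrafilter $\cU$ gives $B\in\cU\Leftrightarrow B\cap Z\neq\emptyset$ for $B\in\cB$, so a point $\eta$ from the hypothesis satisfies $\eta\in B\Leftrightarrow B\cap Z\neq\emptyset$ for all $B\in\cB$. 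One checks $\eta\in Z$ (else a basic neighbourhood of $\eta$ inside $X\setminus Z$ would contradict the equivalence) and that every open set meeting $Z$ contains $\eta$, i.e. $\overline{\{\eta\}}=Z$; uniqueness is forced by $T_0$. The main obstacles are this bookkeeping — confirming that the ultrafilter-limit point both lies in $Z$ and is generic for it — and, in the forward direction, the nontrivial input that the constructible topology of a spectral space is compact, a fact lying essentially at the level of Hochster's theorem itself.
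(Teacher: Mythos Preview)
The paper does not prove this theorem: it is quoted verbatim as Corollary~3.3 of Finocchiaro's paper and used as a black box, so there is no ``paper's own proof'' to compare against. Your proposal is therefore going well beyond what the paper does.

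That said, your argument is sound and follows the expected route through Hochster's characterization. The forward direction is clean once you invoke compactness of the patch topology, and you correctly flag that this is the real input. In the converse, closing $\cB$ under finite intersections first is the right move, and your verifications of quasi-compactness (of $X$ and of each $B\in\cB$) and soberness are accurate; in particular, the FIP check for the family $\cG$ attached to an irreducible closed $Z$ uses irreducibility exactly where it is needed, and the two-step verification that the resulting point $\eta$ lies in $Z$ and is generic for it is correct. One small point you leave implicit: after showing every $B\in\cB$ is quasi-compact, Hochster requires a basis of quasi-compact opens \emph{closed under finite intersection}, and this is guaranteed precisely because you already closed $\cB$ under finite intersections; it is worth saying this explicitly rather than leaving it to the reader.
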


Let $C\subseteq as(\cE)$ be a subset and $\cU$ an ultrafilter on $Esp_C(\cE)$, define $\cP_{\cU}:=\{E\in\cE: U_C(E)\in\cU\}$. Note by definition the zero object $0$ is contained in any subcategory in $as(\cE)$, so $0\in\cP\in C$ and $U_C(0)=Esp_C(\cE)\in\cU$, this means $0\in\cP_{\cU}$ so $\cP_{\cU}$ is always nonempty.

\begin{lem} \label{sppre} Suppose $C\subseteq as(\cE)$ and $\cU$ is an ultrafilter on $Esp_C(\cE)$. Then $\cP_{\cU}\in as(\cE)$. Moreover, $\cP_{\cU}\in Th(\cE)$ if $C\subseteq Th(\cE)$. \end{lem}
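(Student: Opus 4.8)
The plan is to verify each closure property of $\cP_{\cU}$ directly from the definition $\cP_{\cU}=\{E\in\cE: U_C(E)\in\cU\}$, translating categorical conditions into conditions on the ultrafilter $\cU$ via the basic identities for $U_C(-)$ established earlier. The key dictionary entries are: $U_C(E\oplus F)=U_C(E)\cap U_C(F)$ (the complement of $Z_C(E)\cup Z_C(F)=Z_C(E\oplus F)$), and, when $C\subseteq Th(\cE)$, the inclusions $U_C(E\oplus F)\subseteq U_C(G)$ for any $G\in thick(E\oplus F)$ (which was the crucial observation in the proof of the lemma preceding Theorem \ref{sober}).

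First I would check $\cP_{\cU}$ is closed under finite direct sums: if $E,F\in\cP_{\cU}$ then $U_C(E),U_C(F)\in\cU$, so $U_C(E\oplus F)=U_C(E)\cap U_C(F)\in\cU$ by the filter axiom (2), hence $E\oplus F\in\cP_{\cU}$. Next, closure under direct summands: if $E\oplus F\in\cP_{\cU}$ then $U_C(E)\cap U_C(F)=U_C(E\oplus F)\in\cU$, and since $U_C(E\oplus F)\subseteq U_C(E)$ and $\subseteq U_C(F)$, the upward-closure axiom (3) gives $U_C(E),U_C(F)\in\cU$, so both $E,F\in\cP_{\cU}$. Together with $0\in\cP_{\cU}$ (already noted, since $U_C(0)=Esp_C(\cE)\in\cU$), this shows $\cP_{\cU}\in as(\cE)$.

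For the second assertion, assume $C\subseteq Th(\cE)$ and let $L\xra{}M\xra{}N$ be a conflation with two of $L,M,N$ in $\cP_{\cU}$; say $L,M\in\cP_{\cU}$ (the other two cases are symmetric). Since each $\cP\in C$ is thick and $N\in thick(L\oplus M)$, we have $U_C(L\oplus M)\subseteq U_C(N)$; as $U_C(L\oplus M)=U_C(L)\cap U_C(M)\in\cU$, axiom (3) forces $U_C(N)\in\cU$, i.e. $N\in\cP_{\cU}$. The remaining two cases use $L\in thick(M\oplus N)$ and $M\in thick(L\oplus N)$ respectively (valid in any extriangulated $\cP\in Th(\cE)$ by the $2$-out-of-$3$ property, exactly as in Lemma \ref{blem}(3)). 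Hence $\cP_{\cU}$ has the $2$-out-of-$3$ property for conflations and, being already in $as(\cE)$, lies in $Th(\cE)$.

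I do not expect a genuine obstacle here: the argument is a routine transcription of filter axioms against the elementary identities for $U_C(-)$, and the only point requiring a moment's care is making sure the thickness of the members $\cP\in C$ — not of $\cP_{\cU}$ itself — is what yields the containments $thick(E\oplus F)\subseteq\cP$ that drive the two-out-of-three step, just as in the preceding lemma.
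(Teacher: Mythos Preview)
Your proof is correct and follows essentially the same route as the paper: both verify closure under direct sums/summands via $U_C(E\oplus F)=U_C(E)\cap U_C(F)$ together with the filter axioms, and both obtain the $2$-out-of-$3$ property from the inclusions $U_C(L)\cap U_C(N)\subseteq U_C(M)$ (and its two variants) that follow from thickness of each $\cP\in C$. The only cosmetic difference is that the paper treats the case $L,N\in\cP_{\cU}$ first and states the inclusion directly, whereas you phrase it via $N\in thick(L\oplus M)$.
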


\begin{proof} Suppose $M,N\in\cP_{\cU}$, that is, $U_C(M), U_C(N)\in\cU$, then $U_C(M\oplus N)=U_C(M)\cap U_C(N)\in\cU$ and this gives $M\oplus N\in\cP_{\cU}$. Similarly, from $M\oplus N\in\cP_{\cU}$, that is, $U_C(M)\cap U_C(N)=U_C(M\oplus N)\in\cU$ we get $U_C(M), U_C(N)\in\cU$ so $M,N\in\cP_{\cU}$.

Next, let $L\xra{}M\xra{}N$ be a conflation and $L,N\in\cP_{\cU}$, then $U_C(L)\cap U_C(N)\in\cU$ as $U_C(L), U_C(N)$ belong to $\cU$, hence $U_C(M)\in\cU$ because $U_C(M)\supseteq U_C(L)\cap U_C(N)$ and $\cU$ is a filter. The other two containments can be proved similarly. \end{proof}

Therefore, we have the following criterion for spectralness:

\begin{cor} \label{spectral} Let $C\subseteq as(\cE)$ be a subset, the space $Esp_C(\cE)$ is spectral if $\cP_{\cU}\in C$ for any ultrafilter $\cU$ on $Esp_C(\cE)$. In particular, $Esp_{as(\cE)}(\cE)$ and $Esp_{Th(\cE)}(\cE)$ are always spectral. \end{cor}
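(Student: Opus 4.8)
The plan is to verify the hypothesis of Finocchiaro's criterion (Theorem \ref{finocchiaro}) using the closed basis $\{U_C(E)\}_{E\in\cE}$ of $Esp_C(\cE)$. Since $Esp_C(\cE)$ is always $T_0$ by Proposition \ref{t0}, it suffices to produce, for each ultrafilter $\cU$ on $Esp_C(\cE)$, a point $\cP$ satisfying $U_C(E)\in\cU \Leftrightarrow \cP\in U_C(E)$ for all basic opens $U_C(E)$. The natural candidate is $\cP_{\cU}=\{E\in\cE: U_C(E)\in\cU\}$, which by assumption lies in $C$, hence is a legitimate point of the space. The equivalence to check is then $U_C(E)\in\cU \Leftrightarrow E\in\cP_{\cU}$, but this is literally the definition of $\cP_{\cU}$, so the only real content is confirming that $\cP_{\cU}$ is an \emph{element of the space}, which is exactly what the hypothesis grants together with Lemma \ref{sppre} (which ensures $\cP_{\cU}\in as(\cE)$, and $\cP_{\cU}\in Th(\cE)$ when $C\subseteq Th(\cE)$, so the condition $\cP_{\cU}\in C$ is even plausible). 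Thus the first step is simply to invoke Lemma \ref{sppre} and the hypothesis to get $\cP_{\cU}\in C$, then observe the biconditional holds by construction, and conclude via Theorem \ref{finocchiaro}.

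For the ``in particular'' claims, I would specialize $C$ to $as(\cE)$ and to $Th(\cE)$. In the first case Lemma \ref{sppre} gives $\cP_{\cU}\in as(\cE)=C$ with no further work; in the second case Lemma \ref{sppre} gives $\cP_{\cU}\in Th(\cE)=C$. Either way the hypothesis of the corollary is automatically satisfied, so $Esp_{as(\cE)}(\cE)$ and $Esp_{Th(\cE)}(\cE)$ are spectral.

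One small subtlety worth stating carefully: Finocchiaro's criterion as quoted uses an \emph{open} basis $\cB$, and the condition is phrased as $x\in B \Leftrightarrow B\in\cU$ for all $B\in\cB$. Lemma \ref{blem}(1) tells us $\{U_C(E)\}_{E\in\cE}$ is indeed an open basis, so this matches; I would just make the identification $\cB=\{U_C(E): E\in\cE\}$ explicit to avoid any confusion with the closed basis $\{supp_C(E)\}$. There is no genuine obstacle here — the corollary is essentially a direct translation of Finocchiaro's theorem through the dictionary $E\leftrightarrow U_C(E)$, with Lemma \ref{sppre} doing the structural bookkeeping that makes the hypothesis $\cP_{\cU}\in C$ reasonable to impose. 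The ``hardest'' part is merely the bookkeeping of checking that the constructed $\cP_{\cU}$ satisfies the membership equivalence, which is immediate from its definition.

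\begin{proof}
By Lemma \ref{blem}(1), $\cB:=\{U_C(E): E\in\cE\}$ is an open basis for $Esp_C(\cE)$, and by Proposition \ref{t0} the space is $T_0$. Let $\cU$ be an ultrafilter on $Esp_C(\cE)$ and set $\cP_{\cU}=\{E\in\cE: U_C(E)\in\cU\}$. By Lemma \ref{sppre}, $\cP_{\cU}\in as(\cE)$, and $\cP_{\cU}\in Th(\cE)$ if $C\subseteq Th(\cE)$; by hypothesis $\cP_{\cU}\in C$, so $\cP_{\cU}$ is a point of $Esp_C(\cE)$. For every basic open $U_C(E)\in\cB$ we have, directly from the definition of $\cP_{\cU}$,
\[
\cP_{\cU}\in U_C(E) \iff E\in\cP_{\cU} \iff U_C(E)\in\cU .
\]
Hence $\cP_{\cU}$ lies in the set $\{\cP\in Esp_C(\cE): \forall B\in\cB,\ \cP\in B \Leftrightarrow B\in\cU\}$, which is therefore nonempty. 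By Theorem \ref{finocchiaro}, $Esp_C(\cE)$ is spectral.

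For the last assertion, take $C=as(\cE)$: then $\cP_{\cU}\in as(\cE)=C$ for every ultrafilter $\cU$ by Lemma \ref{sppre}, so $Esp_{as(\cE)}(\cE)$ is spectral. Taking $C=Th(\cE)$, Lemma \ref{sppre} gives $\cP_{\cU}\in Th(\cE)=C$, so $Esp_{Th(\cE)}(\cE)$ is spectral as well.
\end{proof}
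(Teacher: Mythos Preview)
Your proof is correct and follows essentially the same approach as the paper: invoke $T_0$, take $\cB=\{U_C(E)\}_{E\in\cE}$ as the open basis, and verify that $\cP_{\cU}$ (which lies in $C$ by hypothesis, with Lemma \ref{sppre} handling the two special cases) witnesses the nonemptiness condition in Finocchiaro's criterion via the tautological biconditional $\cP_{\cU}\in U_C(E)\Leftrightarrow U_C(E)\in\cU$. The only slip is calling $\{U_C(E)\}$ a ``closed basis'' in your opening sentence, but you correct this yourself and the formal proof is fine.
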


\begin{proof} The space $Esp_C(\cE)$ is always $T_0$, so by Theorem \ref{finocchiaro}, it's enough to show, for any ultrafilter $\cU$ on $Esp_C(\cE)$, \[\cE_C(\cU):=\{\cP\in Esp_C(\cE): \forall U_C(E), \cP\in U_C(E)\Leftrightarrow U_C(E)\in\cU\}\neq\emptyset\] because $\{U_C(M)\}_{M\in\cE}$ is an open basis for $Esp_C(\cE)$. By Lemma \ref{sppre}, $\cP_{\cU}$ belongs to $\cE_C(\cU)$ and this completes the proof. \end{proof}

Now we discuss some examples of the criterion.

\begin{ex} \label{ex} Let $(\cT,\otimes,\mathbf{1})$ be a tensor triangulated category, consider $C_{ptt}$, the collection of all prime tt-ideals (i.e., thick tensor ideals), then the Balmer spectrum is spectral. Indeed, it's enough to show $\cP_{\cU}$ is a prime tt-ideal for any ultrafilter $\cU$ on the Balmer spectrum by Corollary \ref{spectral}. First of all $\cP_{\cU}$ is proper because $\mathbf{1}\notin\cP_{\cU}$ ($\mathbf{1}$ is not contained in any prime tt-ideal hence $U_{ptt}(\mathbf{1}):= U_{C_{ptt}}(\mathbf{1})=\emptyset \notin\cU$).

Now, suppose $E\in\cP_{\cU}$ and $T$ an arbitrary object of $\cT$, we have $U_{ptt}(E)\subseteq U_{ptt}(E)\cup U_{ptt}(T)=U_{ptt}(E\otimes T)$ and this gives $U_{ptt}(E\otimes T)\in\cU$, i.e., $E\otimes T\in\cP_{\cU}$. Next, $E\otimes F\in\cP_{\cU}$ implies at least one of $E$ and $F$ in $\cP_{\cU}$, otherwise $U_{ptt}(E), U_{ptt}(F)\notin\cU$ gives $BSpc(\cT)\setminus U_{ptt}(E), BSpc(\cT)\setminus U_{ptt}(F)$ in $\cU$ (since $\cU$ is an ultrafilter) so \[BSpc(\cT)\setminus\bigg(U_{ptt}(E)\cup U_{ptt}(F)\bigg)=\bigg(BSpc(\cT)\setminus U_{ptt}(E)\bigg)\cap \bigg(BSpc(\cT)\setminus U_{ptt}(F)\bigg)\in\cU\] hence $U_{ptt}(E\otimes F)=U_{ptt}(E)\cup U_{ptt}(F)\notin\cU$ gives a contradiction. \end{ex}

\begin{ex} \label{serre} Let $\cA$ be an abelian category, consider $C_{Serre}$, the collection of all Serre subcategories, then the Serre spectrum $Esp_{Serre}(\cA)=Esp_{C_{Serre}}(\cA)$ is spectral. As above, it's enough to show $\cP_{\cU}$ is a Serre subcategory for any ultrafilter $\cU$ on the space $Esp_{Serre}(\cA)$ by Corollary \ref{spectral}. In this situation, the Serre-support $supp_{Serre}$ satisfies $supp_{Serre}(M)=supp_{Serre}(L)\cup supp_{Serre}(N)$ for any short exact sequence $0\xra{}L\xra{}M\xra{}N\xra{}0$ and this property gives what we want. \end{ex}

Now, we use the criterion to show the noncommutative Balmer spectrum (in the sense of \cite{nakano2022noncommutative}) for certain monoidal triangulated category is spectral. Recall a thick subcategory $\cP$ is a \textbf{thick two-sided ideal} if it's closed under left and right tensoring with any object of the underlying category. It is a \textbf{nc-prime} if it's proper, and \[\text{For objects $A,B\in\cK$, we have $A\otimes K\otimes B\in\cP$ for all $K\in\cK$ implies $A$ or $B$ in $\cP$}\] where $(\cK, \otimes, \mathbf{1})$ is an essentially small monoidal triangulated category. Denote by $Esp_{nc}(\cK), supp_{nc}$, $U_{nc}$ the relevant notations for the collection $C_{nc}=\{\text{nc-primes}\}$. By Lemma 4.1.2 \cite{nakano2022noncommutative}, for any $A,B\in\cK$, we have \[\bigcup_{K\in\cK} supp_{nc}(A\otimes K\otimes B)=supp_{nc}(A)\cap supp_{nc}(B)\] hence \[U_{nc}(A)\cup U_{nc}(B)=\bigcap_{K\in\cK} U_{nc}(A\otimes K\otimes B)\subseteq U_{nc}(A\otimes\mathbf{1}\otimes B)=U_{nc}(A\otimes B)\] This gives

\begin{lem} \label{2sided} Let $\cK$ be a monoidal triangulated category and $\cU$ an ultrafilter on $Esp_{nc}(\cK)$. Then $\cP_{\cU}$ is a proper thick two-sided ideal. \end{lem}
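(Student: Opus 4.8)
The plan is to verify the three defining properties of a proper thick two-sided ideal for $\cP_{\cU}$, leaning on Lemma~\ref{sppre} for the thickness and on the displayed containment $U_{nc}(A)\cup U_{nc}(B)\subseteq U_{nc}(A\otimes B)$ (a consequence of Lemma 4.1.2 of \cite{nakano2022noncommutative}) for everything involving the tensor structure. First, $\cP_{\cU}$ is a thick subcategory: since $C_{nc}\subseteq Th(\cK)$, Lemma~\ref{sppre} already gives $\cP_{\cU}\in Th(\cK)$, so there is nothing to do here beyond citing it.

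Next I would show $\cP_{\cU}$ is a two-sided ideal, i.e. closed under left and right tensoring by arbitrary objects of $\cK$. Take $E\in\cP_{\cU}$, so $U_{nc}(E)\in\cU$, and let $T\in\cK$ be arbitrary. Applying the displayed inclusion with $A=E$, $B=T$ (for right tensoring) gives $U_{nc}(E)\subseteq U_{nc}(E)\cup U_{nc}(T)\subseteq U_{nc}(E\otimes T)$; since $\cU$ is a filter and hence upward closed, $U_{nc}(E\otimes T)\in\cU$, that is $E\otimes T\in\cP_{\cU}$. The same argument with $A=T$, $B=E$ handles $T\otimes E$, so $\cP_{\cU}$ is a thick two-sided ideal.

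Finally, $\cP_{\cU}$ is proper: the unit $\mathbf{1}$ lies in no nc-prime (an nc-prime is by definition proper), hence $U_{nc}(\mathbf{1})=\emptyset$, and $\emptyset\notin\cU$ since $\cU$ is a filter; therefore $\mathbf{1}\notin\cP_{\cU}$, so $\cP_{\cU}\neq\cK$. Assembling these three points completes the proof.

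I don't anticipate a genuine obstacle here: the lemma is a warm-up toward applying Corollary~\ref{spectral} (one still needs the nc-primeness of $\cP_{\cU}$, which is the real work and presumably comes next), and every ingredient is either Lemma~\ref{sppre} or the already-established inclusion $U_{nc}(A)\cup U_{nc}(B)\subseteq U_{nc}(A\otimes B)$. The only point requiring a moment's care is making sure the ideal property is checked on \emph{both} sides, which is why one invokes the symmetric inclusion twice.
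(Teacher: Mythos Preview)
Your proposal is correct and matches the paper's proof essentially line for line: properness via $\mathbf{1}\notin\cP_{\cU}$ (since $U_{nc}(\mathbf{1})=\emptyset\notin\cU$), thickness from Lemma~\ref{sppre}, and the two-sided ideal property from the inclusion $U_{nc}(E)\subseteq U_{nc}(E)\cup U_{nc}(K)\subseteq U_{nc}(E\otimes K)$ (and symmetrically for $K\otimes E$) together with upward closure of $\cU$. The paper's write-up is slightly terser---it leaves thickness implicit and points back to Example~\ref{ex} for the properness argument---but the content is identical.
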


\begin{proof} First, for the same reason as Example \ref{ex}, $\mathbf{1}\notin\cP_{\cU}$ so it's proper. To show $\cP_{\cU}$ is a two-sided ideal, let $E\in\cP_{\cU}$ and $K\in\cK$, we have \[U_{nc}(E)\subseteq U_{nc}(E)\cup U_{nc}(K)\subseteq U_{nc}(E\otimes K)\] Similarly $U_{nc}(E)\subseteq U_{nc}(K\otimes E)$, so together we get $E\otimes K$ and $K\otimes E$ in $\cP_{\cU}$ because $U_{nc}(E)\in\cU$. \end{proof}

The containment $supp_{nc}(A\otimes B)\subseteq supp_{nc}(A)\cap supp_{nc}(B)$ is an equality for some $\cK$ and tensor product. This is called the \textbf{tensor product property}. Below we show, under the tensor product property, the noncommutative Balmer spectrum is spectral.

\begin{thm} \label{ncspectral} Let $\cK$ be a monoidal triangulated category such that the support $supp_{nc}$ possesses the tensor product property, i.e., $supp_{nc}(A\otimes B)=supp_{nc}(A)\cap supp_{nc}(B)$. Then the noncommutative Balmer spectrum of $\cK$ is spectral. \end{thm}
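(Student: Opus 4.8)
The plan is to apply Corollary \ref{spectral}, so it suffices to show that $\cP_{\cU}$ is an nc-prime for any ultrafilter $\cU$ on $Esp_{nc}(\cK)$. By Lemma \ref{2sided} we already know $\cP_{\cU}$ is a proper thick two-sided ideal, so the only remaining point is the nc-primeness condition: if $A\otimes K\otimes B\in\cP_{\cU}$ for all $K\in\cK$, then $A\in\cP_{\cU}$ or $B\in\cP_{\cU}$.

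First I would unwind what the hypothesis $A\otimes K\otimes B\in\cP_{\cU}$ for all $K$ says in terms of $\cU$: it means $U_{nc}(A\otimes K\otimes B)\in\cU$ for every $K$, and in particular (taking $K=\mathbf{1}$) that $U_{nc}(A\otimes B)\in\cU$. Now I invoke the tensor product property, which gives $supp_{nc}(A\otimes B)=supp_{nc}(A)\cap supp_{nc}(B)$; passing to complements this reads $U_{nc}(A\otimes B)=U_{nc}(A)\cup U_{nc}(B)$. So the hypothesis becomes $U_{nc}(A)\cup U_{nc}(B)\in\cU$. The key step is then the standard ultrafilter dichotomy: since $\cU$ is an ultrafilter, from $U_{nc}(A)\cup U_{nc}(B)\in\cU$ we conclude $U_{nc}(A)\in\cU$ or $U_{nc}(B)\in\cU$ — otherwise both complements $Esp_{nc}(\cK)\setminus U_{nc}(A)$ and $Esp_{nc}(\cK)\setminus U_{nc}(B)$ lie in $\cU$, hence so does their intersection $Esp_{nc}(\cK)\setminus(U_{nc}(A)\cup U_{nc}(B))$, contradicting that $\cU$ is a filter containing $U_{nc}(A)\cup U_{nc}(B)$. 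Either way, by definition of $\cP_{\cU}$ we get $A\in\cP_{\cU}$ or $B\in\cP_{\cU}$, which is exactly nc-primeness.

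I do not expect any serious obstacle here: the whole argument is essentially the same as the tensor-product step in Example \ref{ex}, with the prime ideal condition $E\otimes F\in\cP_{\cU}\Rightarrow E\in\cP_{\cU}$ or $F\in\cP_{\cU}$ replaced by the nc-prime condition, and the only extra input needed beyond Lemma \ref{2sided} is the tensor product property, which is exactly what upgrades the containment $U_{nc}(A)\cup U_{nc}(B)\subseteq U_{nc}(A\otimes B)$ (noted before Lemma \ref{2sided}) to the equality that feeds the ultrafilter dichotomy. If anything is worth being careful about, it is making sure the tensor product property is being used in the right direction — namely, that it is the inclusion $supp_{nc}(A\otimes B)\subseteq supp_{nc}(A)\cap supp_{nc}(B)$ (equivalently $U_{nc}(A)\cup U_{nc}(B)\supseteq U_{nc}(A\otimes B)$) that is the new information, since the reverse inclusion already holds in general for monoidal triangulated categories.

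\begin{proof}
By Corollary \ref{spectral}, it suffices to show $\cP_{\cU}$ is an nc-prime for every ultrafilter $\cU$ on $Esp_{nc}(\cK)$. By Lemma \ref{2sided}, $\cP_{\cU}$ is a proper thick two-sided ideal, so it remains to verify the nc-primeness condition. Suppose $A,B\in\cK$ satisfy $A\otimes K\otimes B\in\cP_{\cU}$ for all $K\in\cK$. Taking $K=\mathbf{1}$ gives $A\otimes B\in\cP_{\cU}$, that is, $U_{nc}(A\otimes B)\in\cU$. By the tensor product property, $supp_{nc}(A\otimes B)=supp_{nc}(A)\cap supp_{nc}(B)$, and taking complements yields $U_{nc}(A\otimes B)=U_{nc}(A)\cup U_{nc}(B)$. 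Hence $U_{nc}(A)\cup U_{nc}(B)\in\cU$. If neither $U_{nc}(A)$ nor $U_{nc}(B)$ belonged to $\cU$, then since $\cU$ is an ultrafilter both $Esp_{nc}(\cK)\setminus U_{nc}(A)$ and $Esp_{nc}(\cK)\setminus U_{nc}(B)$ would lie in $\cU$, so their intersection $Esp_{nc}(\cK)\setminus(U_{nc}(A)\cup U_{nc}(B))$ would lie in $\cU$, contradicting that $\cU$ is a filter containing $U_{nc}(A)\cup U_{nc}(B)$. Therefore $U_{nc}(A)\in\cU$ or $U_{nc}(B)\in\cU$, i.e., $A\in\cP_{\cU}$ or $B\in\cP_{\cU}$. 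This shows $\cP_{\cU}$ is an nc-prime, and the proof is complete.
\end{proof}
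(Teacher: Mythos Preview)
Your proof is correct and follows essentially the same route as the paper's: invoke Lemma \ref{2sided}, specialize $K=\mathbf{1}$ to get $A\otimes B\in\cP_{\cU}$, and then run the ultrafilter dichotomy exactly as in Example \ref{ex} using the tensor product property. One small slip in your informal commentary (not in the proof itself): the inclusion $supp_{nc}(A\otimes B)\subseteq supp_{nc}(A)\cap supp_{nc}(B)$ is the one that holds in general for monoidal triangulated categories (as the paper notes just before Lemma \ref{2sided}), so the \emph{new} content of the tensor product property is the reverse inclusion, equivalently $U_{nc}(A\otimes B)\subseteq U_{nc}(A)\cup U_{nc}(B)$.
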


\begin{proof} Let $\cU$ be an ultrafilter on $Esp_{nc}(\cK)$ and Lemma \ref{2sided} tells us that $\cP_{\cU}$ is a proper thick two-sided ideal. Suppose $A\otimes K\otimes B\in\cP_{\cU}$ for all $K\in\cK$, then certainly $A\otimes B=A\otimes\mathbf{1}\otimes B\in\cP_{\cU}$ so $U_{nc}(A\otimes B)\in\cU$. A similar argument as the last argument in Example \ref{ex} will give $A$ or $B$ in $\cP_{\cU}$. \end{proof}

Please see \cite{nakano2022tensor} and references therein for a comprehensive discussion of the tensor product property and monoidal triangulated categories that satisfy this property.


\vp{0.3}

\section{Immersion}

Let $(X,\sigma)$ be a support on $\cE$, there are natural inclusion preserving maps \[as(\cE)\overset{f_{\sigma}}{\underset{g_{\sigma}} \sdarrow} subset(X)\] where $f_{\sigma}(\cN):=\bigcup\limits_{N\in\cN}\sigma(N)$ for $\cN\in as(\cE)$ and $g_{\sigma}(S):=\{E\in\cE: \sigma(E)\subseteq S\}$ for $S\in subset(X)$. Note $g_{\sigma}(S)\in as(\cE)$ by the Definition of support. Note by definition we always have $\cN\subseteq g_{\sigma}f_{\sigma}(\cN)$ and $f_{\sigma}g_{\sigma}(S)\subseteq S$.

For any $x\in X$, we can define a specialization-closed subset $W_x:=\{x'\in X: x\notin\overline{\{x'\}}\}$, consider the following subcategories \[\cP_{X,\sigma}(x):=\{E\in\cE: x\notin\sigma(E)\}\] \[\cS_{X,\sigma}(x):=g_{\sigma}(W_x)=\{E\in\cE:\sigma(E)\subseteq W_x\}\] \[\cN_{X,\sigma}(x):=g_{\sigma}(X\setminus\overline{\{x\}}) =\{E\in\cE: \sigma(E)\subseteq X\setminus\overline{\{x\}}\}\]  one checks directly that these subcategories all belong to $as(\cE)$. If we take $(X,\sigma)$ to be $(Esp_C(\cE),supp_C)$ for a subset $C\subseteq as(\cE)$, the subcategory $\cP_{X,\sigma}(x)=\cP_{Esp_C(\cE),supp_C}(x)=\cP(\overline{\{x\}})$, here the latter is the subcategory considered in subsection \ref{scri} characterizing the soberness of the space $Esp_C(\cE)$. Indeed, \[\cP(\overline{\{x\}})=\{E\in\cE: U_C(E)\cap\overline{\{x\}}\neq\emptyset\} = \{E\in\cE: x\in U_C(E)\}\] by Lemma \ref{top}

\begin{lem} \label{pre1} \hp{1}

\begin{enumerate}

\item $\cP_{X,\sigma}(x)=\cS_{X,\sigma}(x)$, if $(X,\sigma)$ is a Noetherian sober closed support.
\item $\cP_{X,\sigma}(x)=\cN_{X,\sigma}(x)$, if $(X,\sigma)$ is an open support.

\end{enumerate}

\end{lem}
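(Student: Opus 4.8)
The plan is to unwind the definitions in each case and reduce to an inclusion of subsets of $X$ that I can handle with the hypotheses. For part (1), I start from the observation that $\cP_{X,\sigma}(x)=\{E\in\cE: x\notin\sigma(E)\}$ while $\cS_{X,\sigma}(x)=\{E\in\cE:\sigma(E)\subseteq W_x\}$, so it suffices to prove that for a single object $E$ the conditions ``$x\notin\sigma(E)$'' and ``$\sigma(E)\subseteq W_x$'' are equivalent. One direction is immediate from the definition $W_x=\{x'\in X: x\notin\overline{\{x'\}}\}$: note $x\notin W_x$ (since $x\in\overline{\{x\}}$), so $\sigma(E)\subseteq W_x$ forces $x\notin\sigma(E)$; this direction needs no hypothesis. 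For the reverse direction I would argue that since $(X,\sigma)$ is closed, $\sigma(E)$ is a closed set; if $x\notin\sigma(E)$ then for every $x'\in\sigma(E)$ we have $\overline{\{x'\}}\subseteq\sigma(E)$, hence $x\notin\overline{\{x'\}}$, i.e. $x'\in W_x$; thus $\sigma(E)\subseteq W_x$. This last argument in fact only uses closedness of $\sigma(E)$. So I expect (1) to go through using only the closed-support hypothesis — the Noetherian and sober hypotheses may be needed elsewhere in the downstream results (the $\cS$-injectivity implication in the diagram) rather than for the equality of subcategories itself, but I will keep them in the statement to match the paper; alternatively I should double-check whether the intended proof routes through $f_\sigma g_\sigma = 1$ and hence genuinely uses them.

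For part (2), again it suffices to compare, object by object, the condition ``$x\notin\sigma(E)$'' with the condition ``$\sigma(E)\subseteq X\setminus\overline{\{x\}}$''. Here I use that $(X,\sigma)$ is an open support, so $\sigma(E)$ is an open set. If $\sigma(E)\subseteq X\setminus\overline{\{x\}}$ then in particular $x\notin\sigma(E)$ (since $x\in\overline{\{x\}}$), with no hypothesis needed. Conversely, suppose $x\notin\sigma(E)$; I want $\sigma(E)\cap\overline{\{x\}}=\emptyset$. This is exactly Lemma~\ref{top}(1) applied to the open set $U=\sigma(E)$ and the point $x$: $U\cap\overline{\{x\}}=\emptyset$ if and only if $x\notin U$. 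So openness of $\sigma(E)$ plus Lemma~\ref{top} gives the equality at once.

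The main obstacle, such as it is, is bookkeeping about which hypothesis is really used where: in (1) the genuine content is just ``$\sigma(E)$ closed $\Rightarrow$ ($x\notin\sigma(E)\Leftrightarrow\sigma(E)\subseteq W_x$)'', and in (2) it is ``$\sigma(E)$ open $\Rightarrow$ ($x\notin\sigma(E)\Leftrightarrow\sigma(E)\cap\overline{\{x\}}=\emptyset$)'' via Lemma~\ref{top}. I do not expect any difficulty beyond carefully citing Lemma~\ref{top} in the open case and invoking the elementary fact that closed sets are specialization-closed in the closed case. I would write the proof as two short paragraphs, one per item, each an ``$\subseteq$ and $\supseteq$'' comparison of the defining membership conditions.
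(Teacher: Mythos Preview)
Your proof is correct. Part (2) matches the paper's argument essentially verbatim, reducing to Lemma~\ref{top}(1) applied to the open set $\sigma(E)$.

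For part (1) your approach differs from the paper's and is in fact cleaner. The paper invokes the Noetherian and sober hypotheses to write $\sigma(E)=\overline{\{x_1\}}\cup\cdots\cup\overline{\{x_r\}}$ as a finite union of irreducible closed sets with generic points, and then checks the equivalence $\sigma(E)\subseteq W_x \Leftrightarrow x\notin\sigma(E)$ via the finitely many $x_i$. You instead use only that closed sets are specialization-closed: if $x'\in\sigma(E)$ then $\overline{\{x'\}}\subseteq\sigma(E)$, so $x\notin\sigma(E)$ forces $x'\in W_x$. Your suspicion is therefore right: the Noetherian and sober assumptions are not actually needed for the equality $\cP_{X,\sigma}(x)=\cS_{X,\sigma}(x)$ itself; closedness of $\sigma(E)$ suffices. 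The stronger hypotheses are genuinely used elsewhere (e.g.\ in Lemma~\ref{pre2}(2) and Theorem~\ref{immersion}), which is presumably why they are bundled into the statement here, but your more economical argument buys a slightly sharper lemma.
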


\begin{proof} For the first statement, let $E\in\cE$ be an object, we have $\sigma(E)=\overline{\{x_1\}}\cup\cdots\cup\overline {\{x_r\}}$ for some $x_1,\cdots,x_r\in X$, as $\sigma(E)$ is closed and $X$ is Noetherian and sober. Therefore, \[\sigma(E)\subseteq W_x\Leftrightarrow x_i\in W_x \text{ for all } i \Leftrightarrow x\notin\overline{\{x_i\}} \text{ for all } i\Leftrightarrow x\notin\sigma(E)\] that is, $\cP_{X,\sigma}(x)=\cS_{X,\sigma}(x)$. Next, an object $E\in\cN_{X,\sigma}(x)$ if and only if $\sigma(E)\subseteq X\setminus\overline{\{x\}}$ if and only if $\sigma(E)\cap\overline{\{x\}}$ and this means $x\notin\sigma(E)$ by Lemma \ref{top} because $\sigma(E)$ is open. \end{proof}

\begin{lem} \label{pre2} \hp{1}

\begin{enumerate} \item Suppose $X$ is a $T_0$ space, then $W_x=W_y$ if and only if $x=y$. \item Let $(X,\sigma)$ be a Noetherian sober closed support and $f_{\sigma}\circ g_{\sigma}=id_{Spcl(X)}$, where $Spcl(X)=\{W\subseteq X: W \text{ is specialization-closed}\}$. Then $x\in\overline{\{y\}}$ if and only if $\cP_{X,\sigma}(x)\subseteq\cP_{X,\sigma}(y)$. \end{enumerate} \end{lem}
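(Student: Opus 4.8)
The plan is to prove the two statements separately. For statement (1), one direction is trivial. For the nontrivial direction, suppose $W_x = W_y$ but $x \neq y$. Since $X$ is $T_0$, there is an open set $U$ containing exactly one of $x, y$; say $x \in U$ and $y \notin U$. The idea is to translate membership in $W_x$ into specialization data: recall $W_x = \{x' : x \notin \overline{\{x'\}}\}$. First I would observe that $x \notin W_x$ always (since $x \in \overline{\{x\}}$), and likewise $y \notin W_y = W_x$. Now I would try to produce a point witnessing $W_x \neq W_y$ directly from the separating open set $U$: since $x \in U$ and $U$ is open, $U$ is generalization-closed, so any $x'$ with $x \in \overline{\{x'\}}$ must lie in $U$; contrapositively, $x' \notin U$ implies $x' \in W_x$. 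In particular $y \notin U$ gives $y \in W_x$. But $y \notin W_y$, so $W_x \neq W_y$, a contradiction. Hence $x = y$.

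For statement (2), I would use the identification $\cP_{X,\sigma}(x) = \cS_{X,\sigma}(x) = g_\sigma(W_x)$ from Lemma \ref{pre1}(1), which applies since $(X,\sigma)$ is a Noetherian sober closed support. Under this identification the claim becomes: $x \in \overline{\{y\}}$ if and only if $g_\sigma(W_x) \subseteq g_\sigma(W_y)$. One direction is the easy one: if $x \in \overline{\{y\}}$, then $\overline{\{x\}} \subseteq \overline{\{y\}}$, which I claim forces $W_y \subseteq W_x$ (if $x' \in W_y$, i.e. $y \notin \overline{\{x'\}}$; and if $x \in \overline{\{x'\}}$ then $\overline{\{x\}}\subseteq\overline{\{x'\}}$ so $y \in \overline{\{x\}} \subseteq \overline{\{x'\}}$, contradiction, hence $x' \in W_x$); then $g_\sigma$ is inclusion-preserving, so $g_\sigma(W_y)$... wait, I want the inclusion in the stated direction, so let me instead note $W_y \subseteq W_x$ gives $g_\sigma(W_y) \subseteq g_\sigma(W_x)$ — which is the reverse of what is wanted, so I would recheck the variance: actually since $W_x \subseteq W_y$ is what yields $g_\sigma(W_x)\subseteq g_\sigma(W_y)$, and the correct specialization implication is that $x \in \overline{\{y\}}$ gives $\overline{\{x\}} \subseteq \overline{\{y\}}$, hence (taking complements inside the definition of $W$) $W_x \subseteq W_y$; this then gives $g_\sigma(W_x) \subseteq g_\sigma(W_y)$ as desired.

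For the converse in (2), suppose $g_\sigma(W_x) \subseteq g_\sigma(W_y)$ but $x \notin \overline{\{y\}}$, aiming for a contradiction. This is where the hypothesis $f_\sigma \circ g_\sigma = \mathrm{id}_{Spcl(X)}$ is essential. The point is that $f_\sigma$ is inclusion-preserving, so from $g_\sigma(W_x) \subseteq g_\sigma(W_y)$ we get $f_\sigma g_\sigma(W_x) \subseteq f_\sigma g_\sigma(W_y)$, i.e. $W_x \subseteq W_y$. Now I would show $W_x \subseteq W_y$ together with $X$ being $T_0$ forces $x \in \overline{\{y\}}$: indeed $x \notin \overline{\{y\}}$ means $y \in W_x$ (since $y \in W_x \iff x \notin \overline{\{y\}}$), so $y \in W_y$, contradicting $y \notin W_y$. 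Hence $x \in \overline{\{y\}}$, completing the proof.

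I expect the main obstacle to be bookkeeping the variance of $W_{(-)}$, $g_\sigma$, and $f_\sigma$ correctly — in particular verifying that $x \in \overline{\{y\}} \Rightarrow W_x \subseteq W_y$ (rather than $\supseteq$) and that $g_\sigma$ and $f_\sigma$ are order-preserving in the way the argument requires — and in confirming that the $f_\sigma g_\sigma = \mathrm{id}$ hypothesis is exactly what upgrades the one-sided inclusion $f_\sigma g_\sigma(S) \subseteq S$ into the equality needed to recover $W_x \subseteq W_y$ from the corresponding inclusion of subcategories. The topological inputs themselves (open sets are generalization-closed, $y \in W_x \iff x \notin \overline{\{y\}}$, which is essentially Lemma \ref{top}) are routine.
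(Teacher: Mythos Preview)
Your proof is correct and follows essentially the same approach as the paper: both use $x\notin W_x$ together with the $T_0$ property for (1), and for (2) both reduce to the sets $W_x$ via Lemma~\ref{pre1}(1) and invoke $f_\sigma g_\sigma = \mathrm{id}$ only for the harder (converse) direction. The paper's execution is slightly more streamlined --- for (1) it argues directly that $W_x=W_y$ forces $\overline{\{x\}}=\overline{\{y\}}$ (from $y\notin W_y=W_x$ and $x\notin W_x=W_y$) without introducing a separating open set, and for (2) it runs a single chain of equivalences through $y\notin W_x=f_\sigma(\cP_{X,\sigma}(x))=\bigcup_{E\in\cP_{X,\sigma}(x)}\sigma(E)$ rather than passing through the intermediate inclusion $W_x\subseteq W_y$.
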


\begin{proof} \begin{enumerate} \item It's obvious that $x\notin W_x$ for any $x\in X$. Therefore $y\notin W_y=W_x$ give $x\in\overline{\{y\}}$, so $\overline{\{x\}}\subseteq\overline{\{y\}}$. Similarly, $x\notin W_x=W_y$ gives $\overline{\{y\}}\subseteq\overline{\{x\}}$. To sum up, $W_x=W_y$ means $\overline{\{x\}}=\overline{\{y\}}$ so it holds if and only if $x=y$, as $X$ is $T_0$. \item By Lemma \ref{pre1} (1) we have $\cP_{X,\sigma}(x)=\cS_{X,\sigma}(x)$. Also, $W_x\in Spcl(X)$ so \[x\in\overline{\{y\}}\Leftrightarrow y\notin W_x=f_{\sigma}(g_{\sigma}(W_x))=f_\sigma(\cS_{X,\sigma}(x))=f_\sigma(\cP_{X,\sigma}(x))\] \[\Leftrightarrow y\notin\sigma(E) \text{ for all } E\in\cP_{X,\sigma}(x) \] \[\Leftrightarrow E\in\cP_{X,\sigma}(y) \text{ for all } E\in\cP_{X,\sigma}(x) \] \[\Leftrightarrow \cP_{X,\sigma}(x)\subseteq\cP_{X,\sigma}(y) \qedhere \] \end{enumerate} \end{proof}

\subsection{Immersion for closed support} \label{close} Let $(X,\sigma)$ be a closed support on $\cE$, the natural inclusion preserving maps $f_{\sigma}, g_{\sigma}$ are in fact between \[as(\cE)\overset{f_{\sigma}} {\underset{g_{\sigma}} \sdarrow} Spcl(X)\]

\begin{lem} \label{tech} For any $x\in X$, we have $\overline{\{x\}}=f_{\sigma}g_{\sigma}(\overline{\{x\}})$ if and only if $\overline{\{x\}}=\sigma(E)$ for some $E\in\cE$. \end{lem}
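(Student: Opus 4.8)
The plan is to prove the biconditional in Lemma~\ref{tech} in two directions, with the forward direction being immediate and the reverse direction requiring a small argument using that $f_\sigma g_\sigma(S)\subseteq S$ always holds together with the closedness of $X$ and the support property.

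First I would handle $(\Leftarrow)$. Suppose $\overline{\{x\}}=\sigma(E)$ for some $E\in\cE$. Then $E\in g_\sigma(\overline{\{x\}})$ since $\sigma(E)=\overline{\{x\}}\subseteq\overline{\{x\}}$. Applying $f_\sigma$ we get $\sigma(E)\subseteq f_\sigma g_\sigma(\overline{\{x\}})$, i.e. $\overline{\{x\}}\subseteq f_\sigma g_\sigma(\overline{\{x\}})$. The reverse containment $f_\sigma g_\sigma(\overline{\{x\}})\subseteq\overline{\{x\}}$ is one of the general inclusions noted right after the definition of $f_\sigma,g_\sigma$ (namely $f_\sigma g_\sigma(S)\subseteq S$ for any $S$). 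Hence $\overline{\{x\}}=f_\sigma g_\sigma(\overline{\{x\}})$.

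For $(\Rightarrow)$, assume $\overline{\{x\}}=f_\sigma g_\sigma(\overline{\{x\}})=\bigcup_{N\in g_\sigma(\overline{\{x\}})}\sigma(N)$. I want to exhibit a single object whose support is exactly $\overline{\{x\}}$. The natural candidate-type argument is: since $X$ is a topological space with no finiteness assumption yet, the union is over possibly infinitely many objects $N$ with $\sigma(N)\subseteq\overline{\{x\}}$; however, one always has $x\in\overline{\{x\}}$, so there must exist at least one $N_0\in g_\sigma(\overline{\{x\}})$ with $x\in\sigma(N_0)$, and then $\overline{\{x\}}\subseteq\sigma(N_0)$ because $\sigma(N_0)$ is closed (this is the closed support hypothesis) and contains $x$; combined with $\sigma(N_0)\subseteq\overline{\{x\}}$ from $N_0\in g_\sigma(\overline{\{x\}})$, we conclude $\sigma(N_0)=\overline{\{x\}}$, so we may take $E=N_0$.

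The step I expect to be the only real subtlety is the extraction of a single object $N_0$ from the union with $x\in\sigma(N_0)$: this uses only that $x$ lies in the union $\bigcup_{N}\sigma(N)=\overline{\{x\}}$, which is immediate, so in fact there is no genuine obstacle here — the lemma is essentially a formal consequence of the two general inclusions plus closedness of $\sigma$-values. I would just make sure to invoke $f_\sigma g_\sigma(S)\subseteq S$ explicitly and to note that the closedness of $\sigma(N_0)$ upgrades ``$x\in\sigma(N_0)$'' to ``$\overline{\{x\}}\subseteq\sigma(N_0)$''.
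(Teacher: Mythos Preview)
Your proof is correct and matches the paper's argument essentially verbatim: both directions use exactly the same steps (for $(\Leftarrow)$, $E\in g_\sigma(\overline{\{x\}})$ gives $\overline{\{x\}}\subseteq f_\sigma g_\sigma(\overline{\{x\}})$ with the reverse inclusion always holding; for $(\Rightarrow)$, pick $N_0$ in the union with $x\in\sigma(N_0)$ and use closedness of $\sigma(N_0)$ to force $\sigma(N_0)=\overline{\{x\}}$). The only difference is that you present the two implications in the opposite order.
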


\begin{proof} Suppose $x\in\overline{\{x\}}\subseteq f_{\sigma}g_{\sigma}(\overline{\{x\}})=\bigcup_{M\in g_{\sigma}(\overline{\{x\}})} \sigma(M)$, then there is some $E\in g_{\sigma}(\overline{\{x\}})$, i.e., $\sigma(E)\subseteq\overline{\{x\}}$ such that $x\in\sigma(E)$ and this forces $\sigma(E)=\overline{\{x\}}$ (because $\sigma(E)$ is closed). Conversely, $E\in g_{\sigma}(\overline{\{x\}})$ if $\overline{\{x\}}=\sigma(E)$ for some $E$, so we get directly $\overline{\{x\}}\subseteq f_{\sigma}g_{\sigma}(\overline{\{x\}})$ (the other containment follows from the definitions of $f_{\sigma}$ and $g_{\sigma}$). \end{proof}

Define $C_{X,\sigma}:=\{\cP_{X,\sigma}(x): x\in X\}\subseteq as(\cE)$. By Lemma \ref{pre1} $C_{X,\sigma}=\{\cS_{X,\sigma}(x): x\in X\}=\{g_{\sigma}(W_x): x\in X\}$ as $(X,\sigma)$ is closed. In this situation $g_{\sigma}$ maps $pm(X):=\{W_x\}_{x\in X}$ to $C_{X,\sigma}$. On the other direction,

\begin{prop} The map $f_{\sigma}: as(\cE)\xra{} Spcl(X)$ restricts to $C_{X,\sigma}\xra{} pm(X)$, that is, $f_{\sigma}(g_{\sigma}(W_x))\in pm(X)$ for any $x\in X$, if the equivalent conditions in Lemma \ref{tech} holds. \end{prop}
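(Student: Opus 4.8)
The plan is to prove the sharper statement that $f_{\sigma}(g_{\sigma}(W_x))=W_x$ for every $x\in X$; since $W_x\in pm(X)$ by definition, this yields the proposition immediately (recall $\cS_{X,\sigma}(x)=g_{\sigma}(W_x)$, so this is exactly the assertion that $f_{\sigma}$ carries $C_{X,\sigma}$ into $pm(X)$). One inclusion is free: from the general identity $f_{\sigma}g_{\sigma}(S)\subseteq S$ recorded just before the statement, we get $f_{\sigma}(g_{\sigma}(W_x))\subseteq W_x$ with no hypothesis at all. So the work lies entirely in the reverse inclusion $W_x\subseteq f_{\sigma}(g_{\sigma}(W_x))=\bigcup_{M\in g_{\sigma}(W_x)}\sigma(M)$.

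For the reverse inclusion I would fix a point $x'\in W_x$, i.e.\ $x\notin\overline{\{x'\}}$, and produce an object of $g_{\sigma}(W_x)$ whose support contains $x'$. The hypothesis (the equivalent conditions of Lemma \ref{tech}) supplies $E\in\cE$ with $\sigma(E)=\overline{\{x'\}}$. The one thing to check is that $\sigma(E)=\overline{\{x'\}}\subseteq W_x$: if $x''\in\overline{\{x'\}}$ then $\overline{\{x''\}}\subseteq\overline{\{x'\}}$, and since $x\notin\overline{\{x'\}}$ this forces $x\notin\overline{\{x''\}}$, i.e.\ $x''\in W_x$ --- in other words, $W_x$ is specialization-closed, which is exactly why it was defined that way. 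Consequently $E\in g_{\sigma}(W_x)$, and $x'\in\overline{\{x'\}}=\sigma(E)$ shows $x'\in f_{\sigma}(g_{\sigma}(W_x))$. This finishes the reverse inclusion, hence the equality and the proposition.

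I do not expect a genuine obstacle: the argument is a short unwinding of the definitions, and the only substantive point is recognizing that the Lemma \ref{tech} hypothesis is precisely what guarantees that for each point of the specialization-closed set $W_x$ there is an object realizing its closure as a support, so that $g_{\sigma}(W_x)$ "sees" all of $W_x$. An alternative would run via Lemma \ref{tech} applied to a decomposition of $W_x$ into closures of its points, but in general $W_x$ need not be such a finite union, so the pointwise argument above is the cleaner route.
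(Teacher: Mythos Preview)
Your proof is correct and follows essentially the same approach as the paper: you prove the stronger equality $f_{\sigma}(g_{\sigma}(W_x))=W_x$, obtain the inclusion $f_{\sigma}g_{\sigma}(W_x)\subseteq W_x$ for free, and for the reverse inclusion pick $x'\in W_x$, invoke Lemma~\ref{tech} to find $E$ with $\sigma(E)=\overline{\{x'\}}$, and use the specialization-closedness of $W_x$ to conclude $E\in g_{\sigma}(W_x)$. This is exactly the paper's argument with $x'$ in place of $y$.
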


\begin{proof} In fact, we will show $f_{\sigma}(g_{\sigma}(W_x))=W_x$, for any $x\in X$. First of all, by construction we have $f_{\sigma}(g_{\sigma}(W_x))\subseteq W_x$. For the other direction, choose any $y\in W_x$, we have $x\notin\overline{ \{y\} }=\sigma(E)$ for some $E$, by Lemma \ref{tech}. Hence $\sigma(E)=\overline{\{y\}}\subseteq W_x$ as $W_x$ is specialization-closed and this gives $E\in g_{\sigma}(W_x)$. Therefore $y\in\overline{\{y\}}=\sigma(E)\subseteq f_{\sigma}g_{\sigma}(W_x)$. \end{proof}

In fact, the above shows, under the equivalent conditions in Lemma \ref{tech}, the maps $f_{\sigma}: as(\cE) \rightleftarrows Spcl(X): g_{\sigma}$ restrict to bijections $C_{X,\sigma} \rightleftarrows pm(X)$. Hence, define

\begin{defn} \label{classifying} A closed support $(X,\sigma)$ on $\cE$ is \textbf{classifying} if \begin{enumerate} \item $X$ is Noetherian and sober; \item The condition in Lemma \ref{tech} holds, that is, $\overline{\{x\}}=f_{\sigma}g_{\sigma}(\overline{\{x\}})$ for any $x\in X$. Equivalently, for any $x\in X$ we have $\overline{\{x\}}=\sigma(E)$ for some $E\in\cE$. \end{enumerate} \end{defn}

Clearly, a classifying support data in the sense of \cite{balmer2005spectrum} (resp. \cite{matsui2019singular} and \cite{matsui2021prime}) is a classifying support in the above sense if the underlying category $\cE$ is in fact a tensor triangulated (resp. triangulated) category.

\begin{lem} \label{supported} Let $(X,\sigma)$ be a classifying closed support on $\cE$. Then \begin{enumerate} \item The natural inclusion preserving maps $f_{\sigma}: Th(\cE) \rightleftarrows Spcl(X): g_{\sigma}$ restrict to inclusion preserving bijections $f_{\sigma}: C_{X,\sigma} \rightleftarrows pm(X): g_{\sigma}$. \item $\{\sigma(E)\}_{E\in\cE}$ form a closed basis for $X$. \item For any closed subset $Z$ of $X$, there is some object $E\in\cE$ such that $Z=\sigma(E)$. \end{enumerate} \end{lem}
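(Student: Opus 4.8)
The plan is to deduce all three items from the machinery already set up, in a slightly reordered fashion. I would first prove (2) and (3), which are really the same statement, and then use them together with the bijections established just above (and with Lemma \ref{pre1}(1) and Lemma \ref{pre2}) to get (1). For (3): let $Z$ be a closed subset of $X$. Since $X$ is Noetherian and sober (part of the classifying hypothesis), $Z$ is a finite union of irreducible closed sets, so $Z = \overline{\{x_1\}} \cup \cdots \cup \overline{\{x_r\}}$ for some points $x_i$. By condition (2) of Definition \ref{classifying}, for each $i$ there is an object $E_i \in \cE$ with $\overline{\{x_i\}} = \sigma(E_i)$. Then $E := E_1 \oplus \cdots \oplus E_r$ satisfies $\sigma(E) = \sigma(E_1) \cup \cdots \cup \sigma(E_r) = Z$ by the additivity axiom $\sigma(E \oplus F) = \sigma(E) \cup \sigma(F)$ of a support (Definition \ref{defsupp}). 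This proves (3), and (2) is then immediate: every closed set of $X$ is of the form $\sigma(E)$, and conversely each $\sigma(E)$ is closed since $(X,\sigma)$ is a closed support, so $\{\sigma(E)\}_{E \in \cE}$ is (literally) the collection of all closed subsets of $X$, hence a fortiori a closed basis.

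For (1): the discussion immediately preceding Definition \ref{classifying} already shows that, under the condition of Lemma \ref{tech} (which is condition (2) of the classifying hypothesis), the maps $f_\sigma$ and $g_\sigma$ restrict to mutually inverse bijections $C_{X,\sigma} \rightleftarrows pm(X)$; and these maps are inclusion-preserving by construction (the definitions of $f_\sigma$ and $g_\sigma$ are monotone, and "mutually inverse" was established via $f_\sigma g_\sigma(W_x) = W_x$ together with $g_\sigma f_\sigma = 1$ on the relevant domain — the latter using that $C_{X,\sigma} = \{g_\sigma(W_x)\}$ and part (1) of Lemma \ref{pre2}). So the only new content in (1) is the assertion that these maps land inside $Th(\cE)$ rather than merely $as(\cE)$, i.e. that each $\cP_{X,\sigma}(x) = \cS_{X,\sigma}(x)$ is in fact a thick subcategory. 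I would verify this directly from the equality $\cP_{X,\sigma}(x) = \{E \in \cE : x \notin \sigma(E)\}$: closure under summands and 2-out-of-3 for conflations are not automatic for a general closed support, so here I expect one needs the additional input that $\sigma$ behaves well on conflations. Since the lemma only claims the statement for classifying closed supports and the examples of interest ($Esp_C$ with $C \subseteq Th(\cE)$, the Balmer spectrum, etc.) all have $\sigma = supp_C$ satisfying $supp_C(M) \subseteq supp_C(L) \cup supp_C(N)$ and its cyclic variants (Lemma \ref{blem}(3)), I would either (a) invoke that $(X,\sigma)$ being classifying and closed forces $\sigma$ to be the $supp_C$-support for $C = C_{X,\sigma} \subseteq Th(\cE)$ via the bijection of the preceding paragraph and hence inherit Lemma \ref{blem}(3), or (b) observe that $\cP_{X,\sigma}(x) = \cP(\overline{\{x\}})$ (as noted after the definition of $\cP_{X,\sigma}$, when $(X,\sigma) = (Esp_C(\cE), supp_C)$), and apply the Lemma preceding Theorem \ref{sober} which shows $\cP(Z) \in Th(\cE)$ when $C \subseteq Th(\cE)$.

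The step I expect to be the main obstacle is precisely pinning down why $\cP_{X,\sigma}(x) \in Th(\cE)$ in item (1): the monotone-bijection part is already done in the text, and (2)–(3) are short, but thickness genuinely requires the support to interact with conflations, and the cleanest route is to identify the abstract classifying closed support $(X,\sigma)$ with $(Esp_{C_{X,\sigma}}(\cE), supp_{C_{X,\sigma}})$ — using that $\{\sigma(E)\}$ is a closed basis by (2), that $X$ is sober, and the bijection $C_{X,\sigma} \rightleftarrows pm(X)$ — so that Lemma \ref{blem}(3) applies with $C = C_{X,\sigma} \subseteq Th(\cE)$. I would spell out this identification (a homeomorphism $X \cong Esp_{C_{X,\sigma}}(\cE)$ intertwining $\sigma$ with $supp_{C_{X,\sigma}}$) as the technical heart, and then everything else follows formally.
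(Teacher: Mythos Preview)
Your arguments for (2) and (3) are essentially the paper's, and for the bijection in (1) you correctly locate the paper's actual proof: it is the single sentence ``This comes from the discussion right before Definition~\ref{classifying}''.

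Where you diverge is in worrying about thickness of $\cP_{X,\sigma}(x)$. The concern is understandable --- the statement displays the ambient maps as $f_\sigma: Th(\cE) \rightleftarrows Spcl(X): g_\sigma$ --- but this is a slip for $as(\cE)$ (compare the display opening Section~\ref{close}), and the paper's one-line proof neither asserts nor proves $C_{X,\sigma} \subseteq Th(\cE)$; nor is thickness used anywhere downstream (Theorem~\ref{immersion} and its corollaries work with $C \subseteq as(\cE)$). In fact thickness \emph{cannot} be deduced from the classifying hypothesis alone, since Definition~\ref{defsupp} imposes no axiom relating $\sigma$ to conflations, so $\cP_{X,\sigma}(x) = \{E : x \notin \sigma(E)\}$ is only guaranteed to lie in $as(\cE)$. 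Your proposed routes are both circular: (a) invokes Lemma~\ref{blem}(3) with $C = C_{X,\sigma}$, but that lemma takes $C \subseteq Th(\cE)$ as a \emph{hypothesis}; (b) only applies once $(X,\sigma)$ is already identified with $(Esp_C(\cE), supp_C)$ for some $C \subseteq Th(\cE)$, which again presupposes the conclusion. Drop the thickness discussion and your plan coincides with the paper's proof.
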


\begin{proof} \begin{enumerate} \item This comes from the discussion right before Definition \ref{classifying}. \item This is an easy consequence of the fact that any $\overline{\{x\}}$ can be written as $\sigma(E)$ for some $E$. \item For any closed subset $Z$, we have $Z=\overline{\{x_1\}}\cup\cdots\cup\overline{\{x_r\}}$ for some $x_i\in X$ and $r\in\mathbb{N}$ because $X$ is Noetherian and sober. There are objects $E_i$ such that $\overline{\{x_i\}}=\sigma(E_i)$ and this gives $Z=\sigma(E_1)\cup\cdots\cup\sigma(E_r)=\sigma(E_1\oplus\cdots\oplus E_r)$. \qedhere \end{enumerate} \end{proof}

\begin{defn} \label{compatible} Let $C\subseteq as(\cE)$ be a subset, a support $(X,\sigma)$ is \textbf{$C$-compatible} if $\cP_{X,\sigma}(x)\in C$ for any $x\in X$. \end{defn}

\begin{lem} \label{comlem} Given any $C\subseteq as(\cE)$, the support $(Esp_C(\cE), supp_C)$ is $C$-compatible. \end{lem}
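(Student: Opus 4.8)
The plan is simply to unwind the definitions; there is nothing more to do. Recall that a point of the space $Esp_C(\cE)$ is by construction an element $\cP$ of the chosen set $C\subseteq as(\cE)$, i.e.\ a subcategory. So what must be checked is that, for every $\cP\in C$, the subcategory $\cP_{Esp_C(\cE),supp_C}(\cP):=\{E\in\cE: \cP\notin supp_C(E)\}$ again lies in $C$.

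The key observation is that $supp_C(E)=Z_C(\{E\})=\{\cQ\in Esp_C(\cE): E\notin\cQ\}$, so for the point $x=\cP$ the condition $\cP\notin supp_C(E)$ is literally the condition $E\in\cP$. Hence
\[\cP_{Esp_C(\cE),supp_C}(\cP)=\{E\in\cE: \cP\notin supp_C(E)\}=\{E\in\cE: E\in\cP\}=\cP,\]
which of course belongs to $C$. (Equivalently, one may argue via the open basis: $\cP\in U_C(E)\Leftrightarrow E\in\cP$ directly from the definition of $U_C$; this is exactly the identity $\cP(\overline{\{\cP\}})=\cP$ recorded right after the definitions of $\cP_{X,\sigma}(x)$, $\cS_{X,\sigma}(x)$, $\cN_{X,\sigma}(x)$ using Lemma \ref{top}.)

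There is essentially no obstacle here. The only point requiring a moment of care is the bookkeeping that the ``points'' of $Esp_C(\cE)$ are precisely the subcategories making up $C$, so that the assertion ``$\cP_{X,\sigma}(x)\in C$ for all $x$'' collapses to the tautology ``$\cP\in C$ for all $\cP\in C$.'' I would present the computation above in one line and be done.
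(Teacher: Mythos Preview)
Your proof is correct and is essentially identical to the paper's own argument: both compute directly that $\cP_{Esp_C(\cE),supp_C}(\cP)=\{E\in\cE:\cP\notin supp_C(E)\}=\{E\in\cE:E\in\cP\}=\cP\in C$. There is nothing to add.
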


\begin{proof} For any $\cP\in Esp_C(\cE)$, $\cP_{Esp_C(\cE), supp_C}(\cP)=\{M\in\cE: \cP\notin supp_C(M)\}=\{M\in\cE: M\in\cP\}=\cP\in C$. \end{proof}

\begin{lem} \label{r-supp} Let $(X,\sigma)$ be a support and $U\subseteq X$ a subset, give $U$ the induced topological space structure and define $\sigma_U(E):=\sigma(E)\cap U$ for any $E\in\cE$. Then $(U,\sigma_U)$ is also a support for $\cE$. Moreover, \begin{enumerate} \item $(U,\sigma_U)$ is open/closed if $(X,\sigma)$ is open/closed. \item  $(U,\sigma_U)$ is $C$-compatible if $(X,\sigma)$ is $C$-compatible, for any subset $C\subseteq as(\cE)$. \item $\{\sigma_U(M)\}_{M\in\cE}$ is an open/closed basis for $U$ if $\{\sigma(M)\}_{M\in\cE}$ is an open/closed basis for $X$. \end{enumerate} \end{lem}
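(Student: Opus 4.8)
The claim is essentially a collection of routine verifications, so the plan is to unwind definitions one bullet at a time. First I would establish the basic statement: $(U,\sigma_U)$ is a support. This requires checking the two axioms of Definition \ref{defsupp}, namely $\sigma_U(0)=\emptyset$ and $\sigma_U(E\oplus F)=\sigma_U(E)\cup\sigma_U(F)$. Both follow immediately from intersecting the corresponding identities for $\sigma$ with $U$: $\sigma_U(0)=\sigma(0)\cap U=\emptyset\cap U=\emptyset$, and $\sigma_U(E\oplus F)=\sigma(E\oplus F)\cap U=(\sigma(E)\cup\sigma(F))\cap U=(\sigma(E)\cap U)\cup(\sigma(F)\cap U)=\sigma_U(E)\cup\sigma_U(F)$ by distributivity.

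For part (1), recall that in the subspace topology on $U$, a subset is open (resp. closed) precisely when it is the intersection with $U$ of an open (resp. closed) subset of $X$. So if $\sigma(E)$ is open (resp. closed) in $X$ for every $E$, then $\sigma_U(E)=\sigma(E)\cap U$ is open (resp. closed) in $U$, giving that $(U,\sigma_U)$ is open (resp. closed). For part (3), the same observation shows that if $\{\sigma(M)\}_{M\in\cE}$ generates the open (resp. closed) sets of $X$, then $\{\sigma(M)\cap U\}_{M\in\cE}=\{\sigma_U(M)\}_{M\in\cE}$ generates the open (resp. closed) sets of $U$, since every open (resp. closed) subset of $U$ has the form $V\cap U$ with $V$ open (resp. closed) in $X$, and $V$ is a union (resp. intersection) of members of the basis, which intersects with $U$ appropriately; here I would be slightly careful that arbitrary unions distribute over $\cap U$ on the nose, while for the closed-basis case one uses that arbitrary intersections also distribute over $\cap U$.

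The one part that takes a moment of thought is (2): $C$-compatibility of $(U,\sigma_U)$. By Definition \ref{compatible} I must show $\cP_{U,\sigma_U}(x)\in C$ for every $x\in U$. The key point is the identity $\cP_{U,\sigma_U}(x)=\cP_{X,\sigma}(x)$ for $x\in U$. Indeed, for $x\in U$ we have $\cP_{U,\sigma_U}(x)=\{E\in\cE:x\notin\sigma_U(E)\}=\{E\in\cE:x\notin\sigma(E)\cap U\}$, and since $x\in U$, the condition $x\notin\sigma(E)\cap U$ is equivalent to $x\notin\sigma(E)$; hence this set equals $\{E\in\cE:x\notin\sigma(E)\}=\cP_{X,\sigma}(x)$. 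Now $C$-compatibility of $(X,\sigma)$ gives $\cP_{X,\sigma}(x)\in C$, so $\cP_{U,\sigma_U}(x)=\cP_{X,\sigma}(x)\in C$, as desired.

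I do not expect any genuine obstacle here; the only thing to watch is the basis argument in (3), where one should note that the subspace topology's open (resp. closed) sets are exactly intersections with $U$ of open (resp. closed) sets of $X$, and that unions/intersections behave well under $-\cap U$, so no separate compactness or finiteness hypothesis is needed.
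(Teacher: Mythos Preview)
Your proposal is correct and follows essentially the same approach as the paper: the paper's proof simply reads ``Straightforward and $\cP_{X,\sigma}(x)=\cP_{U,\sigma_U}(x)$ for $x\in U$,'' which is exactly the key identity you isolate and verify for part~(2), with the rest left as routine.
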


\begin{proof} Straightforward and $\cP_{X,\sigma}(x)=\cP_{U,\sigma_U}(x)$ for $x\in U$. \end{proof}

Let $C\subseteq as(\cE)$ be a subset and $(X,\sigma)$ a $C$-compatible support, one can define a map \[j: X\xra{} Esp_C(\cE)\] \[\hp{0.15}x\mapsto\cP_{X,\sigma}(x)\] we have

\begin{thm} \label{immersion} Let $C\subseteq as(\cE)$ be a subset, $(X,\sigma)$ a $C$-compatible closed support and $j$ be the map defined above. Then \begin{enumerate} \item $j$ is a morphism of supports, that is, $(Esp_C(\cE), supp_C)$ is the final support among all $C$-compatible closed supports. \item $j$ is an immersion if $(X,\sigma)$ is Noetherian, sober, $f_{\sigma}\circ g_{\sigma} = 1_{pm(X)}$, and $\{\sigma(E)\}_{E\in\cE}$ is a closed basis for $X$. Moreover, $j$ is a homeomorphism if and only if $j$ is surjective, that is, $C_{X,\sigma}=C$. \end{enumerate} \end{thm}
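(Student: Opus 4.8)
The plan is to reduce everything to one computation — that $supp_C(E)$ pulls back along $j$ to exactly $\sigma(E)$ — and then exploit that both $\{supp_C(E)\}_{E\in\cE}$ and $\{\sigma(E)\}_{E\in\cE}$ are closed bases.

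\textbf{Part (1).} First I would compute, for any $E\in\cE$,
\[ j^{-1}(supp_C(E)) = \{x\in X : E\notin\cP_{X,\sigma}(x)\} = \{x\in X : x\in\sigma(E)\} = \sigma(E),\]
using only the definitions of $\cP_{X,\sigma}(x)$ and $supp_C$. Since $(X,\sigma)$ is a closed support the sets $\sigma(E)$ are closed, and since $\{supp_C(E)\}_{E\in\cE}$ is a closed basis of $Esp_C(\cE)$ (Lemma \ref{blem}), this one identity shows simultaneously that $j$ is continuous and that it is a morphism of supports; note $C$-compatibility of $(X,\sigma)$ is exactly what makes $j$ land in $Esp_C(\cE)$. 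For finality I would observe that any morphism of supports $f\colon(X,\sigma)\to(Esp_C(\cE),supp_C)$ must satisfy $E\notin f(x)\iff x\in\sigma(E)$ for all $E$, hence $f(x)=\{E:x\notin\sigma(E)\}=\cP_{X,\sigma}(x)=j(x)$, so $j$ is the unique such morphism.

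\textbf{Part (2), injectivity.} Assume now $(X,\sigma)$ is Noetherian, sober, and $f_{\sigma}\circ g_{\sigma}=1_{pm(X)}$. By Lemma \ref{pre1}(1), $\cP_{X,\sigma}(x)=\cS_{X,\sigma}(x)=g_{\sigma}(W_x)$ for every $x$. If $j(x)=j(y)$, then $g_{\sigma}(W_x)=g_{\sigma}(W_y)$; applying $f_{\sigma}$ and using $f_{\sigma}g_{\sigma}=1_{pm(X)}$ with $W_x,W_y\in pm(X)$ gives $W_x=W_y$, whence $x=y$ by Lemma \ref{pre2}(1) since sober spaces are $T_0$.

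\textbf{Part (2), homeomorphism onto the image and the ``moreover''.} Next I would show $j$ is a closed map onto $j(X)$. A direct computation gives, for each $E$,
\[ j(\sigma(E)) = \{\,\cP_{X,\sigma}(x) : x\in\sigma(E)\,\} = \{\,\cP_{X,\sigma}(x) : E\notin\cP_{X,\sigma}(x)\,\} = supp_C(E)\cap j(X).\]
Because $j$ is injective it commutes with arbitrary intersections, so for a closed set $Z=\bigcap_i\sigma(E_i)$ of $X$ — here using the hypothesis that $\{\sigma(E)\}_{E\in\cE}$ is a closed basis of $X$ — one gets $j(Z)=\bigcap_i\big(supp_C(E_i)\cap j(X)\big)=\big(\bigcap_i supp_C(E_i)\big)\cap j(X)$, which is closed in $j(X)$. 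Together with continuity and injectivity this makes $j$ a homeomorphism onto $j(X)$, i.e.\ an immersion. Finally $j(X)=\{\cP_{X,\sigma}(x):x\in X\}=C_{X,\sigma}$ by definition, so $j$ is surjective iff $C_{X,\sigma}=C$, and since $j$ is already an immersion this is equivalent to $j$ being a homeomorphism.

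The argument is mostly bookkeeping with the two closed bases; the delicate points are the injectivity step, which genuinely needs $f_{\sigma}g_{\sigma}=1_{pm(X)}$ (otherwise $\cP_{X,\sigma}(x)=\cP_{X,\sigma}(y)$ does not force $x=y$), and the homeomorphism step, where it is essential that $\{\sigma(E)\}_{E\in\cE}$ be a \emph{basis} of closed sets, not merely a family of closed subsets, so that the subspace topology on $j(X)$ coincides with the one transported from $X$.
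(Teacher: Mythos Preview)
Your proof is correct and follows essentially the same route as the paper: the key computation $j^{-1}(supp_C(E))=\sigma(E)$, injectivity via Lemmas \ref{pre1} and \ref{pre2}, and the immersion conclusion from the two closed bases. You supply more detail than the paper does on the immersion step (explicitly verifying $j(\sigma(E))=supp_C(E)\cap j(X)$ and closedness onto the image) and on the uniqueness/finality claim, both of which the paper leaves implicit.
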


\begin{proof} Note that $j^{-1}(supp_C(E))=j^{-1}(\{\cP\in Esp_C(\cE): E\notin\cP\})=\{x\in X: E\notin\cP_{X,\sigma}(x)\}=\sigma(E)$, so $j$ is continuous as $(X,\sigma)$ is closed, and therefore it defines a morphism of supports. For the second claim, first, we need to show $j$ is injective, suppose $\cP_{X,\sigma}(x)=\cP_{X,\sigma}(y)$ for $x,y\in X$, then by Lemma \ref{pre1} this means $\cS_{X,\sigma}(x)=\cS_{X,\sigma}(y)$, i.e., $g_{\sigma}(W_x)=g_{\sigma}(W_y)$. Hence $W_x=W_y$ as $f_{\sigma}\circ g_{\sigma}=id_{pm(X)}$ and this in turn gives $x=y$ by Lemma \ref{pre2} as $X$ is $T_0$. Next, $j$ is an immersion as $\{supp_C(E)\}_{E\in\cE}$ and $\{\sigma(E)\}_{E\in\cE}$ are closed bases for $Esp_C(\cE)$ (by Lemma \ref{blem}) and $X$, respectively (so the topology on $X$ is the one induced from $Esp_C(\cE)$). \end{proof}

\begin{cor} \label{corimer} Let $C\subseteq as(\cE)$ be a subset, $(X,\sigma)$ a $C$-compatible closed support. Then $j$ is an immersion if $(X,\sigma)$ is classifying. \end{cor}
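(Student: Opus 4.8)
The plan is to deduce this from Theorem~\ref{immersion}(2) by checking that a classifying closed support automatically satisfies all four of its hypotheses, so that no new argument is needed. Two of these, namely that $X$ is Noetherian and sober, are built into Definition~\ref{classifying}(1). A third, that $\{\sigma(E)\}_{E\in\cE}$ is a closed basis for $X$, is exactly the content of Lemma~\ref{supported}(2), which applies precisely because $(X,\sigma)$ is classifying.

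The only remaining point is the identity $f_\sigma\circ g_\sigma = 1_{pm(X)}$. For this I would appeal to the Proposition stated just before Definition~\ref{classifying}: condition (2) of ``classifying'' is literally the equivalent condition appearing in Lemma~\ref{tech}, so that Proposition gives $f_\sigma(g_\sigma(W_x)) = W_x$ for every $x \in X$. Since $pm(X) = \{W_x\}_{x\in X}$ by definition, this says exactly that $f_\sigma\circ g_\sigma$ restricts to the identity on $pm(X)$. I would also remark, for completeness, that the proof of that Proposition uses Noetherianness and soberness (to write $\overline{\{y\}} = \sigma(E)$ for $y \in W_x$), but these are part of the classifying hypothesis, so there is nothing circular.

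Having verified all four hypotheses, Theorem~\ref{immersion}(2) immediately gives that $j$ is an immersion. There is no real obstacle here: the corollary is a bookkeeping consequence, and the only thing to watch is matching conventions — in particular that the injectivity of $j$ in Theorem~\ref{immersion}(2) runs through Lemma~\ref{pre1}(1), identifying $\cP_{X,\sigma}(x)$ with $\cS_{X,\sigma}(x)$, which itself requires $(X,\sigma)$ to be a Noetherian sober \emph{closed} support, all of which hold for a classifying support.
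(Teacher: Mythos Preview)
Your proposal is correct and follows exactly the same approach as the paper: reduce to Theorem~\ref{immersion}(2) by checking that a classifying closed support satisfies all of its hypotheses. The paper's proof is the one-liner ``By Lemma~\ref{supported}, classifying support satisfies all the hypotheses of Theorem~\ref{immersion}(2),'' so your more explicit breakdown (separating the Noetherian/sober condition, the closed-basis condition via Lemma~\ref{supported}(2), and the identity $f_\sigma\circ g_\sigma=1_{pm(X)}$ via the Proposition preceding Definition~\ref{classifying}) is just a spelled-out version of the same argument.
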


\begin{proof} By Lemma \ref{supported}, classifying support satisfies all the hypotheses of Theorem \ref{immersion} (2). \end{proof}

Any closed support $(X,\sigma)$ is certainly $C_{X,\sigma}$-compatible, so

\begin{cor} If $(X,\sigma)$ is a classifying closed support. Then there is a homeomorphism $X\cong Esp_{X,\sigma}(\cE):= Esp_{C_{X,\sigma}}(\cE)$, which restricts to a homeomorphism $\sigma(E)\cong supp_{X,\sigma}(E):=supp_{C_{X,\sigma}}(E)$. \end{cor}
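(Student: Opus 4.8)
The plan is to deduce this directly from Corollary \ref{corimer} and Theorem \ref{immersion}(2), applied to the particular subset $C := C_{X,\sigma} = \{\cP_{X,\sigma}(x) : x \in X\} \subseteq as(\cE)$. First I would observe that any closed support $(X,\sigma)$ is tautologically $C_{X,\sigma}$-compatible, since $\cP_{X,\sigma}(x) \in C_{X,\sigma}$ for every $x \in X$ by the very definition of $C_{X,\sigma}$. Thus the map $j: X \to Esp_{C_{X,\sigma}}(\cE)$, $x \mapsto \cP_{X,\sigma}(x)$, is defined, and because $(X,\sigma)$ is classifying, Corollary \ref{corimer} tells us $j$ is an immersion.

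Next I would check that $j$ is surjective: its image is exactly $\{\cP_{X,\sigma}(x) : x \in X\} = C_{X,\sigma}$, which is by construction the full underlying set of $Esp_{C_{X,\sigma}}(\cE)$. Hence $j$ is a continuous bijection which is an immersion, i.e., a homeomorphism; this is precisely the ``Moreover'' clause of Theorem \ref{immersion}(2), whose hypotheses are met because a classifying support satisfies all the conditions of Theorem \ref{immersion}(2) by Lemma \ref{supported} (this is what Corollary \ref{corimer} records). So $X \cong Esp_{C_{X,\sigma}}(\cE)$.

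Finally, for the restriction to supports I would recall the computation from the proof of Theorem \ref{immersion}, namely $j^{-1}(supp_{C_{X,\sigma}}(E)) = \sigma(E)$ for every $E \in \cE$. Since $j$ is a bijection, this also gives $j(\sigma(E)) = supp_{C_{X,\sigma}}(E)$, and a homeomorphism restricts to a homeomorphism between a subset and its image (with the subspace topologies); therefore $j$ restricts to a homeomorphism $\sigma(E) \xrightarrow{\cong} supp_{X,\sigma}(E)$. There is essentially no obstacle here beyond bookkeeping: the only point that deserves a sentence is the identification of the image of $j$ with the full space $Esp_{C_{X,\sigma}}(\cE)$, which is immediate from the definition of $C_{X,\sigma}$, so the ``hard part'' was really already done in Theorem \ref{immersion} and Corollary \ref{corimer}.
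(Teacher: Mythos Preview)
Your proposal is correct and follows exactly the intended route: the paper itself does not write out a proof for this corollary, but the sentence immediately preceding it (``Any closed support $(X,\sigma)$ is certainly $C_{X,\sigma}$-compatible, so'') indicates precisely your argument via Corollary~\ref{corimer} and the surjectivity clause of Theorem~\ref{immersion}(2). Your added remark about the restriction to $\sigma(E)$ via $j^{-1}(supp_{C_{X,\sigma}}(E))=\sigma(E)$ is the natural way to unpack the second claim and is in the same spirit.
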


\begin{cor} \label{uimmer} Let $C\subseteq as(\cE)$ be a subset, $(X,\sigma)$ a $C$-compatible closed support and $U\subseteq X$ is a subset. By Lemma \ref{r-supp}, $(U,\sigma_U)$ is also a $C$-compatible closed support on $\cE$, so there is a map $j_U: U\xra{}Esp_C(\cE)$ which sends $x\in U$ to $\cP_{U,\sigma_U}(x)=\cP_{X,\sigma}(x)$. This is an immersion if $X$ is classifying. \end{cor}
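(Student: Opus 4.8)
The plan is to derive this directly from Corollary \ref{corimer} by a restriction argument, rather than to re-verify the hypotheses of Theorem \ref{immersion}(2) for the pair $(U,\sigma_U)$. The latter route is in fact problematic: an arbitrary subset $U$ of the sober space $X$ need not be sober, and there is no reason for $f_{\sigma_U}\circ g_{\sigma_U}$ to be the identity on $pm(U)$, so $(U,\sigma_U)$ may well fail to be classifying and Theorem \ref{immersion}(2) does not apply to it as stated. This is precisely why one should argue by restricting the already-established global immersion.

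The first step is to invoke Corollary \ref{corimer}: since $X$ is classifying, the map $j\colon X\to Esp_C(\cE)$, $x\mapsto\cP_{X,\sigma}(x)$, is an immersion, i.e.\ a homeomorphism onto its image $j(X)$ equipped with the subspace topology induced from $Esp_C(\cE)$. The second step is to note that $j_U$ is nothing but the restriction $j|_U$: by the proof of Lemma \ref{r-supp} one has $\cP_{U,\sigma_U}(x)=\cP_{X,\sigma}(x)$ for every $x\in U$, and $U$ carries the subspace topology from $X$ by construction.

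The third step is the elementary topological fact that the restriction of an embedding to a subspace is again an embedding. Explicitly, $j\colon X\xra{\cong} j(X)$ restricts to a continuous bijection $U\to j(U)$ which is a homeomorphism once $U$ carries the subspace topology from $X$ and $j(U)$ the subspace topology from $j(X)$; and by transitivity of subspace topologies the latter agrees with the subspace topology on $j(U)$ induced from $Esp_C(\cE)$. Hence $j_U$ is a homeomorphism onto $j_U(U)=j(U)\subseteq Esp_C(\cE)$ with the subspace topology, which is exactly the assertion that $j_U$ is an immersion.

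I do not expect a genuine obstacle here; the only point needing care is the remark in the first paragraph, namely that one must not feed $(U,\sigma_U)$ into Theorem \ref{immersion}(2) directly, but instead use that $j$ is already globally an immersion together with the transitivity of the subspace topology. (Alternatively, one could check via Lemma \ref{r-supp}(2)--(3) and Lemma \ref{supported}(2) that $(U,\sigma_U)$ is $C$-compatible with $\{\sigma_U(E)\}_{E\in\cE}$ a closed basis for $U$, and then rerun the closed-basis embedding step from the proof of Theorem \ref{immersion}; but even then the injectivity of $j_U$ would have to be inherited from that of $j$ rather than re-established on $U$, so the restriction viewpoint remains the cleanest.)
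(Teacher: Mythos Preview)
Your proof is correct and matches the paper's approach exactly: the paper also observes that $j_U=j|_U$ and concludes by noting this is the composition of the two immersions $U\hookrightarrow X$ and $j\colon X\to Esp_C(\cE)$, which is the same content as your restriction/transitivity argument. Your additional remark that $(U,\sigma_U)$ need not itself be classifying (so Theorem~\ref{immersion}(2) cannot be applied directly) is a worthwhile clarification not made explicit in the paper.
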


\begin{proof} It is obvious that the above map is the same as the restriction $j|_U: U\xra{}X\xra{}Esp_C(\cE)$, so it's the composition of two immersions under the hypotheses. \end{proof}

\subsubsection{Matsui's result} Let $\cT$ be a triangulated category, recall

\begin{defn} (Definition 2.8 \cite{matsui2023triangular})
  A proper thick subcategory $\cP\subsetneq\cT$ is a \textbf{prime thick subcategory} if the partially ordered set $\{\cN\in Th(\cT): \cP\subsetneq\cN\}$ has the smallest element. For simplicity, we will call these the \textbf{Matsui primes}. Choose $C$ to be the collection of such subcategories and denote it by $C_{Matsui}$. Also, denote the topological space $Esp_C(\cT)$ by $MSpc(\cT)$ and this is the so-called \textbf{Matsui spectrum} of $\cT$.
\end{defn}

For a Noetherian scheme $X$ and $Perf(X)$ the category of perfect complexes. The cohomological support $(X, supph_X)$ gives a classifying Noetherian spectral closed support for $Perf(X)$, so there is an immersion $X\hookrightarrow MSpc(Perf(X))$ if $(X, supph_X)$ is $C_{Matsui}$-compatible and this is one of the main result of Matsui (Theorem 3.4 \cite{matsui2021prime}).

By Theorem 1.4 \cite{matsui2021prime}, every prime tt-ideal is a Matsui prime for $Perf(X)$ so there is an embedding $BSpc(Perf(X))\hookrightarrow MSpc(Perf(X))$. In general, one does not know about the relation between the prime tt-ideals and Matsui primes as an arbitrary thick subcategory may or may not be a tt-ideal. For any triangulated category, denote by $thick(-)$ the smallest thick subcategory containing a collection of objects, the following can be extract from Lemma 3.6 \cite{matsui2019singular}:

\begin{lem} \label{ttt} Suppose $(\cT,\otimes,\mathbf{1})$ is a closed tensor triangulated category, that is, the functor $M\otimes-:\cT\xra{}\cT$ has a right adjoint $F(M,-):\cT\xra{}\cT$ for each $M\in\cT$, and $\cT=thick_{\cT}(\mathbf{1})$. Then \[\{\text{tt-ideals}\} = \{\text{radical tt-ideals} \} = \{\text{thick subcategories} \} \] \end{lem}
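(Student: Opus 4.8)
The statement asserts a chain of equalities of three collections of subcategories of $\cT$ under the hypotheses that $(\cT,\otimes,\mathbf{1})$ is a closed tensor triangulated category with $\cT = thick_{\cT}(\mathbf{1})$. The inclusions $\{\text{radical tt-ideals}\}\subseteq\{\text{tt-ideals}\}\subseteq\{\text{thick subcategories}\}$ are trivial (a radical tt-ideal is a tt-ideal; a tt-ideal is in particular thick). So the real content is the two reverse inclusions: every thick subcategory is automatically a tt-ideal, and every tt-ideal is automatically radical.

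First I would show that every thick subcategory $\cP$ is a tt-ideal, i.e.\ closed under $-\otimes M$ for all $M\in\cT$. Fix $E\in\cP$ and consider the full subcategory $\cD := \{M\in\cT : E\otimes M\in\cP\}$. Since $-\otimes M$ is exact (triangulated) and $\cP$ is thick, $\cD$ is a thick subcategory of $\cT$. Moreover $\mathbf{1}\in\cD$ because $E\otimes\mathbf{1}\cong E\in\cP$. Hence $\cD\supseteq thick_{\cT}(\mathbf{1}) = \cT$, so $E\otimes M\in\cP$ for all $M$; thus $\cP$ is an ideal. (Two-sidedness is automatic here since $\otimes$ is symmetric.) This is the step where the generation hypothesis $\cT = thick_{\cT}(\mathbf{1})$ is used, and it does not need the closed structure.

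Next I would show every tt-ideal $\cP$ is radical, i.e.\ $E^{\otimes n}\in\cP$ implies $E\in\cP$. This is exactly where the closed structure enters: the right adjoint $F(M,-)$ of $M\otimes -$ gives rise to the usual machinery (duals, the identity $\mathbf{1}\to F(E,E)$ splitting off $E^{\otimes n}$-style arguments), and the standard proof is that of Balmer \cite[Prop.~4.4]{balmer2005spectrum}, whose argument uses only exactness of the tensor and the existence of internal homs, applied to the thick subcategory generated by $E$ and $\mathbf{1}$. Concretely, one shows that if $E^{\otimes n}\in\cP$ then $E$ lies in the thick subcategory generated by $\cP$ together with... — in fact, since we have just proved \emph{every} thick subcategory is a tt-ideal, radicality is immediate: $thick_{\cT}(E)$ is a tt-ideal containing $E^{\otimes n}$, but one still must deduce $E\in\cP$ from $E^{\otimes n}\in\cP$, which is Balmer's lemma verbatim. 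I would simply cite Lemma 3.6 of \cite{matsui2019singular} as the excerpt already does, and remark that the closed-monoidal hypothesis is used only for this last equality.

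The main obstacle, such as it is, is purely expository: making sure the "$\cD$ is thick" verification is stated cleanly (closure under summands and the $2$-out-of-$3$ property both follow from exactness of $-\otimes M$ and thickness of $\cP$), and correctly attributing the radicality half to the closed structure. Since the excerpt explicitly says the lemma "can be extracted from Lemma 3.6 \cite{matsui2019singular}," the cleanest write-up is to prove the first equality in two lines as above and defer the second to the cited reference.
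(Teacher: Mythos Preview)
Your approach is essentially the paper's. For the equality $\{\text{thick subcategories}\}=\{\text{tt-ideals}\}$ you fix $E\in\cP$ and show $\{M:E\otimes M\in\cP\}$ is thick and contains $\mathbf{1}$; the paper fixes $\cN$ and shows $I_{\cN}:=\{M:M\otimes N\in\cN\text{ for all }N\in\cN\}$ is thick and contains $\mathbf{1}$. These are dual phrasings of the same absorption argument.

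The one place your write-up is looser than the paper is the radicality half. You say Balmer's Proposition 4.4 ``uses only exactness of the tensor and the existence of internal homs,'' but that is not quite right: it needs every object to be \emph{strongly dualizable} (rigid), which is stronger than merely having an internal hom. The paper supplies exactly this missing sentence: $\mathbf{1}$ is trivially dualizable, the dualizable objects form a thick subcategory, and $\cT=thick_{\cT}(\mathbf{1})$ forces every object to be dualizable; then Balmer's result applies and gives $M\in\langle M\otimes M\rangle_{\otimes}$, hence every tt-ideal is radical. Your meandering paragraph (``$E$ lies in the thick subcategory generated by $\cP$ together with\dots in fact\dots but one still must\dots'') is circling this point without landing on it. Replace that paragraph with the one-line dualizability observation and the citation, and your proof matches the paper's exactly.
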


\begin{proof} It's obvious that the object $\mathbf{1}$ is dualizable (i.e., strongly dualizable, or rigid) and the subcategory of dualizable objects form a thick subcategory of $\cT$. This implies all objects in $\cT$ are dualizable because $\cT=thick_{\cT}(\mathbf{1})$. Thus, any object $M\in\cT$ belongs to the smallest tt-ideal generated by $M\otimes M$ so every tt-ideal is radical, by a Proposition 4.4 \cite{balmer2005spectrum}.

Let $\cN$ be a thick subcategory, define $I_{\cN}:=\{M\in\cT: M\otimes N\in\cN \text{ for any } N\in\cN\}$. It is straightforward that $I_{\cN}$ is a thick subcategory containing $\mathbf{1}$ so $I_{\cN}=\cT$ and this means $\cN$ is a tt-ideal. \end{proof}

Therefore

\begin{thm} Suppose $\cT$ is a closed tensor triangulated category whose Balmer spectrum $BSpc(\cT)$ is Noetherian. Then $BSpc(\cT)=MSpc(\cT)$. \end{thm}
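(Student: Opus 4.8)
The plan is to translate the comparison of $BSpc(\cT)$ and $MSpc(\cT)$ into a question about the lattice of specialization-closed subsets of $BSpc(\cT)$. Since $\cT$ is closed and $\cT=thick_{\cT}(\mathbf 1)$, Lemma~\ref{ttt} applies and gives $Th(\cT)=\{\text{tt-ideals}\}=\{\text{radical tt-ideals}\}$. Invoking Balmer's classification of radical tt-ideals by Thomason subsets, together with the fact that in a Noetherian space the Thomason subsets are exactly the specialization-closed ones (Lemma~\ref{nsspace}), one gets an inclusion-preserving bijection
\[
\Phi\colon Th(\cT)\ \xra{\ \cong\ }\ Spcl(BSpc(\cT)),\qquad \cN\longmapsto\bigcup_{E\in\cN}supp(E),
\]
with inverse $W\mapsto\{E\in\cT:supp(E)\subseteq W\}$. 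Since $\Phi$ is an isomorphism of posets, I would first record that a thick subcategory $\cP$ is a Matsui prime exactly when $Y:=\Phi(\cP)$ is a proper specialization-closed subset of $BSpc(\cT)$ for which $\{W\in Spcl(BSpc(\cT)):Y\subsetneq W\}$ has a least element.

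The heart of the argument is to show that such $Y$ are precisely the subsets $supp(\mathfrak q)=\{\mathfrak r\in BSpc(\cT):\mathfrak q\notin\overline{\{\mathfrak r\}}\}$ indexed by the points $\mathfrak q\in BSpc(\cT)$, and that $\Phi^{-1}(supp(\mathfrak q))=\mathfrak q$ (the latter being immediate from $supp(\mathfrak q)=\bigcup_{a\in\mathfrak q}supp(a)$ and injectivity of $\Phi$). For the ``only if'' direction: if $W_0$ denotes the least specialization-closed set strictly containing $Y$, then for any $\mathfrak q\in W_0\setminus Y$ the set $Y\cup\overline{\{\mathfrak q\}}$ is specialization-closed, strictly contains $Y$, and lies inside $W_0$, so it equals $W_0$; applying the same observation to a second point of $W_0\setminus Y$ and using that $BSpc(\cT)$ is $T_0$ (Proposition~\ref{t0}) forces $W_0\setminus Y$ to be a single point $\{\mathfrak q\}$, and then $Y=supp(\mathfrak q)$ --- the inclusion $\subseteq$ because $Y$ is specialization-closed with $\mathfrak q\notin Y$, so $Y$ avoids all generizations of $\mathfrak q$, and the inclusion $\supseteq$ by reapplying minimality of $W_0$ to $Y\cup\overline{\{\mathfrak r\}}$ for a putative $\mathfrak r\in supp(\mathfrak q)\setminus Y$. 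For the ``if'' direction: for any point $\mathfrak q$ the set $supp(\mathfrak q)$ is specialization-closed and proper (as $\mathfrak q\notin supp(\mathfrak q)$), and $supp(\mathfrak q)\cup\{\mathfrak q\}$ is the least specialization-closed subset strictly containing it, because any specialization-closed $W\supsetneq supp(\mathfrak q)$ contains some point having $\mathfrak q$ as a specialization, hence contains $\mathfrak q$.

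Combining the two directions, $\Phi$ restricts to a bijection between the set $C_{Matsui}$ of Matsui primes and $\{supp(\mathfrak q):\mathfrak q\in BSpc(\cT)\}$, and $\Phi^{-1}(supp(\mathfrak q))=\mathfrak q$ shows that as subcategories of $\cT$ the Matsui primes coincide with the prime tt-ideals; that is, $C_{Matsui}=C_{ptt}$ as subsets of $Th(\cT)$. Since the $C$-spectrum and $C$-support of Section~\ref{supp} depend only on the chosen subset $C\subseteq as(\cT)$, the spaces $MSpc(\cT)=Esp_{C_{Matsui}}(\cT)$ and $BSpc(\cT)=Esp_{C_{ptt}}(\cT)$ are then literally identical --- same underlying set, and the same closed basis $\{supp_C(E)\}_{E\in\cT}$ of Lemma~\ref{blem} --- which gives $BSpc(\cT)=MSpc(\cT)$. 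I expect the main obstacle to be the lattice-theoretic identification of the preceding paragraph, in particular the ``only if'' direction that every specialization-closed subset with a unique immediate successor is some $supp(\mathfrak q)$; this is precisely where the Noetherian hypothesis enters, as it is what allows replacing Thomason subsets by specialization-closed subsets so that the bookkeeping through $\Phi$ goes through.
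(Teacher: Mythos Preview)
Your argument is correct. Both you and the paper start from Lemma~\ref{ttt} to identify $Th(\cT)$ with the radical tt-ideals, and both conclude by showing $C_{Matsui}=C_{ptt}$ as subsets of $Th(\cT)$, which immediately gives equality of the two spectra. The difference is in how that identification is carried out: the paper outsources the two inclusions to Propositions~4.7 and~4.8 of \cite{matsui2021prime} (the former uses Noetherianness to show every prime tt-ideal is a Matsui prime, the latter shows any Matsui prime that is a radical tt-ideal is a prime tt-ideal), whereas you reprove the content of those propositions directly via Balmer's classification $\Phi\colon Th(\cT)\cong Spcl(BSpc(\cT))$ and a purely order-theoretic analysis of which specialization-closed subsets admit a unique immediate successor. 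Your route is more self-contained and makes transparent exactly what the Noetherian hypothesis is doing (namely, turning Thomason subsets into all specialization-closed subsets so that $Y\cup\overline{\{\mathfrak q\}}$ stays in the image of $\Phi$); the paper's route is shorter but requires the reader to consult \cite{matsui2021prime}. One small remark: your ``only if'' direction, as you have written it, also routes through the identification $\Phi\cong Spcl$ and hence through Noetherianness, whereas Proposition~4.8 of \cite{matsui2021prime} (Matsui prime $\Rightarrow$ prime tt-ideal) does not actually require it; this does not affect the validity of your proof of the stated theorem.
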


\begin{proof} By Lemma \ref{ttt} any thick subcategory $\cN$ in $\cT$ is a radical tt-ideal. If $\cN$ is a Matsui prime, then it's a prime tt-ideal, by Proposition 4.8 \cite{matsui2021prime}. Conversely, suppose $\cN$ is a prime tt-ideal. $BSpc(\cT)$ is Noetherian so by Proposition 4.7 \cite{matsui2021prime} the set $\{\cI\in Rad_{\otimes}(\cT): \cN\subsetneq\cI \}$ has a unique minimal element (here $Rad_{\otimes}(\cT)$ stands for the set of radical tt-ideals of $\cT$). By Lemma \ref{ttt}, this set is in fact $\{\cI\in Th(\cT): \cN\subsetneq\cI \}$ so $\cN$ is a Matsui prime. \end{proof}

\begin{cor} Let $k$ be a field of characteristic $p>0$ and $G$ a finite group such that $p$ divides the order of $G$. Then $MSpc(kG$-$stab)=BSpc(kG$-$stab)$, where $kG$-$stab$ is the stable module category. \end{cor}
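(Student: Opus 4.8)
The plan is to apply the preceding Theorem to the stable module category $\cT = kG\text{-}stab$, so the task reduces to verifying its two hypotheses: that $\cT$ is a closed tensor triangulated category with $\cT = thick_{\cT}(\mathbf 1)$, and that $BSpc(\cT)$ is Noetherian. For the first, I would recall the standard facts about modular representation theory: $kG\text{-}stab$ is the stable category of finitely generated $kG$-modules, it is tensor triangulated under $\otimes_k$ with diagonal $G$-action and tensor unit $\mathbf 1 = k$ (the trivial module), and it is closed because $\mathrm{Hom}_k(M,-)$ with the conjugation action furnishes a right adjoint to $M \otimes_k -$. The condition $\cT = thick_{\cT}(k)$ is the statement that every finitely generated $kG$-module is built from the trivial module by triangles and summands in the stable category; this is a well-known consequence of the fact that $k$ generates (e.g.\ via the structure of $kG$ as a symmetric algebra, or Rickard's results), and I would simply cite it.

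Next I would address Noetherianity of $BSpc(kG\text{-}stab)$. Here the key input is the Benson--Carlson--Rickard computation identifying $BSpc(kG\text{-}stab)$ with $\mathrm{Proj}\, H^*(G;k)$, the projective support variety; since $G$ is a finite group, $H^*(G;k)$ is a finitely generated graded-commutative $k$-algebra by the Evens--Venkov theorem, hence Noetherian, so $\mathrm{Proj}$ of it is a Noetherian topological space. (The hypothesis that $p$ divides $|G|$ is exactly what makes $kG\text{-}stab$ nontrivial; otherwise $kG$ is semisimple and the category is zero.) With both hypotheses in hand, the previous Theorem yields $BSpc(kG\text{-}stab) = MSpc(kG\text{-}stab)$ directly.

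The only real subtlety is bookkeeping about which category one works in: Theorem statements about closed tensor triangulated categories implicitly want an essentially small category (so that $Th(\cT)$ and the spectrum constructions make sense), and $kG\text{-}stab$ on finitely generated modules is essentially small, so this is fine — but I should make sure $thick_{\cT}(\mathbf 1)$ is interpreted within that small category. I expect no genuine obstacle here; the corollary is essentially an instantiation, and the proof is just: cite that $kG\text{-}stab$ is a closed tt-category generated by $k$, cite that $BSpc(kG\text{-}stab) \cong \mathrm{Proj}\, H^*(G;k)$ is Noetherian, and invoke the Theorem. If I wanted to be thorough I would add one line noting that the equality of spectra is as topological spaces, matching the statement of the Theorem it specializes.

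\begin{proof} Since $p$ divides $|G|$, the group algebra $kG$ is not semisimple and $kG\text{-}stab$ is a nontrivial essentially small triangulated category. It is a closed tensor triangulated category with tensor product $\otimes_k$ (with diagonal $G$-action), tensor unit the trivial module $\mathbf 1 = k$, and internal hom $\mathrm{Hom}_k(-,-)$ (with conjugation action), and one has $kG\text{-}stab = thick(\mathbf 1)$ since the trivial module generates. Moreover $BSpc(kG\text{-}stab)$ is homeomorphic to $\mathrm{Proj}\,H^*(G;k)$ by the Benson--Carlson--Rickard classification of thick tensor ideals, and $H^*(G;k)$ is a finitely generated graded-commutative $k$-algebra by the Evens--Venkov theorem, so $\mathrm{Proj}\,H^*(G;k)$ is a Noetherian space. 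Hence $BSpc(kG\text{-}stab)$ is Noetherian and the previous Theorem applies, giving $MSpc(kG\text{-}stab) = BSpc(kG\text{-}stab)$. \end{proof}
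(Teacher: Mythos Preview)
Your proposal is correct and follows essentially the same route as the paper: verify that $kG\text{-}stab$ is a closed (indeed rigid) tensor triangulated category generated by the unit $k$, cite the identification $BSpc(kG\text{-}stab)\cong \mathrm{Proj}\,H^*(G;k)$ and the Evens--Venkov finite generation to get Noetherianity, then invoke the preceding Theorem. The paper's proof is just a terser version of yours.
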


\begin{proof} The category $kG$-$stab$ is rigid, $kG$-$stab=thick_{kG-stab}(k)$ and $BSpc(kG$-$stab)\cong Proj\,H^*(G;k)$ is Noetherian. \end{proof}

One can deduce the above result by Theorem 2.13, Lemma 3.6, Corollary 3.7, Theorem 3.13 of \cite{matsui2019singular} and Theorem 2.16 of \cite{matsui2021prime}. We record and rephrase it in the current form here so there is a reference for future usage.

\subsection{Immersion for open support} \label{s5}

We discuss immersion for open supports in this subsection. Recall from Section \ref{sec2} that the Hochster dual $X^{\vee}$ of a spectral space $X$ is a new topology on the underlying set of $X$ by taking as basic open subsets the closed sets of $X$ with quasi-compact complements in $X$.

\begin{lem} \label{bopen} Let $C\subseteq as(\cE)$ be a subset and the space $Esp_C(\cE)$ is spectral. Then the basic open subsets of $(Esp_C(\cE))^{\vee}$ are $\bigcap\limits_{1\leqslant i\leqslant n} supp_C(M_i)$ for some objects $M_i$ and $n\in\mathbb{N}$. In particular, $supp_C(M)$ is open in $(Esp_C(\cE))^{\vee}$ for all $M\in\cE$. \end{lem}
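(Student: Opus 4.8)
The plan is to unwind the definition of the Hochster dual and reduce the statement to a single quasi-compactness fact. Write $X:=Esp_C(\cE)$. By Lemma \ref{blem}, $\{U_C(E)\}_{E\in\cE}$ is an open basis of $X$; it is closed under finite intersections since $U_C(M)\cap U_C(N)=U_C(M\oplus N)$, and it contains $X=U_C(0)$ because $0\in\cP$ for every $\cP\in C\subseteq as(\cE)$. By definition the basic open subsets of $X^{\vee}$ are the closed subsets $Z\subseteq X$ with quasi-compact complement, so the goal is to identify this family with $\{\bigcap_{i=1}^{n}supp_C(M_i):M_i\in\cE,\ n\in\mathbb{N}\}$, with the empty intersection read as $X$ (quasi-compact, $X$ being spectral).

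One inclusion is formal: if $Z$ is closed and $U:=X\setminus Z$ is quasi-compact, then $U=\bigcup_{j}U_C(M_j)$ since $\{U_C(E)\}$ is an open basis, a finite subcover gives $U=\bigcup_{i=1}^{n}U_C(M_i)$, and complementing yields $Z=\bigcap_{i=1}^{n}supp_C(M_i)$. For the converse, each $\bigcap_{i=1}^{n}supp_C(M_i)$ is closed with complement $\bigcup_{i=1}^{n}U_C(M_i)$, which is quasi-compact as soon as each $U_C(M)$ is; the case $n=1$ then also yields the ``in particular'' assertion. Thus the entire lemma reduces to showing that $U_C(M)$ is quasi-compact in $X$ for every $M\in\cE$.

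For this I would reuse the ultrafilter argument behind Theorem \ref{finocchiaro} and Corollary \ref{spectral}. Reducing along the basis, suppose $U_C(M)=\bigcup_{i\in I}U_C(N_i)$ has no finite subcover. Then $\{U_C(M)\}\cup\{supp_C(N_i)\}_{i\in I}$ has the finite intersection property, hence is contained in some ultrafilter $\cU$ on $X$, for which $U_C(M)\in\cU$ while $U_C(N_i)\notin\cU$ for all $i$. Since $X$ is spectral --- and, as in every instance in this paper, spectral by way of the basis $\{U_C(E)\}_{E\in\cE}$ in Theorem \ref{finocchiaro} --- the set $\cE_C(\cU)=\{\cP\in X:\ \forall U_C(E),\ \cP\in U_C(E)\Leftrightarrow U_C(E)\in\cU\}$ is nonempty (concretely, $\cP_{\cU}$ works in the situation of Corollary \ref{spectral}). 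Any $\cP\in\cE_C(\cU)$ then satisfies $\cP\in U_C(M)$ and $\cP\notin U_C(N_i)$ for every $i$, contradicting $\cP\in U_C(M)=\bigcup_{i}U_C(N_i)$; hence $U_C(M)$ is quasi-compact.

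I expect this quasi-compactness to be the only real obstacle, its one subtlety being that ``spectral'' must here be used through the concrete ultrafilter witness $\{U_C(E)\}_{E\in\cE}$ (equivalently, $\cE_C(\cU)\neq\emptyset$ for all ultrafilters $\cU$) rather than as a bare homeomorphism type --- which is precisely how spectralness is established in the criteria above (Corollary \ref{spectral}, Examples \ref{ex} and \ref{serre}, Theorem \ref{ncspectral}). When $X$ is moreover Noetherian (the case of interest for Noetherian schemes), the claim is immediate from Lemma \ref{nsspace}(1) since then every subset of $X$ is quasi-compact, and the proof collapses to the bookkeeping of the first two paragraphs.
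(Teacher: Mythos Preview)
Your first inclusion (every closed set with quasi-compact complement equals some $\bigcap_{i=1}^n supp_C(M_i)$) is exactly the paper's proof: the paper writes such a set as $Z_C(\cX)$, covers its complement by $\{U_C(M)\}_{M\in\cX}$, extracts a finite subcover by quasi-compactness, and complements back. The paper stops there. You go further and establish the reverse inclusion by proving that each $U_C(M)$ is quasi-compact, which is precisely what the ``In particular'' clause needs and what the paper leaves unargued. Your ultrafilter argument for this is correct, and your caveat is apt: quasi-compactness of $U_C(M)$ does not follow from the bare hypothesis that $Esp_C(\cE)$ is spectral, but from the stronger statement that the basis $\{U_C(E)\}_{E\in\cE}$ itself witnesses spectralness in the sense of Theorem~\ref{finocchiaro} (equivalently $\cE_C(\cU)\neq\emptyset$ for every ultrafilter $\cU$) --- which is how spectralness is obtained in every concrete instance in the paper (Corollary~\ref{spectral}, Examples~\ref{ex} and~\ref{serre}, Theorem~\ref{ncspectral}) and is automatic in the Noetherian case via Lemma~\ref{nsspace}(1). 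So your proof is more complete than the paper's and honestly isolates the extra hypothesis the stated lemma silently relies on.
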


\begin{proof} By definition, a basic open subset of $(Esp_C(\cE))^{\vee}$ is a closed subset $Z_C(\cX)$ whose complement $Esp_C(\cE)\setminus Z_C(\cX)$ is quasi-compact in $Esp_C(\cE)$, for some collection of objects $\cX$. Therefore \[Esp_C(\cE)\setminus Z_C(\cX)=Esp_C(\cE)\setminus (\bigcap_{M\in\cX} supp_C(M))=\bigcup_{M\in\cX} (Esp_C(\cE)\setminus supp_C(M))=\bigcup_{M\in\cX} U_C(M)\] This is an open cover (in $Esp_C(\cE)$) of $Esp_C(\cE)\setminus Z_C(\cX)$ so there are finitely many objects $M_i$ ($1\leqslant i\leqslant n$) such that $Esp_C(\cE)\setminus Z_C(\cX)=\bigcup\limits_{1\leqslant i\leqslant n} U_C(M_i)$. Hence $Z_C(\cX)=\bigcap\limits_{1\leqslant i\leqslant n} supp_C(M_i)$. \end{proof}

Suppose $(X,\sigma)$ is an open support, the natural inclusion preserving maps $f_{\sigma}, g_{\sigma}$ now become to maps between \[as(\cE)\overset{f_{\sigma}}{\underset{g_{\sigma}} \sdarrow} open(X)\]

\begin{lem} \label{pre3} $x\in\overline{\{y\}}$ if and only if $\cN_{X,\sigma}(x)\supseteq\cN_{X,\sigma}(y)$, if $f_{\sigma}\circ g_{\sigma}=1_{op}$. \end{lem}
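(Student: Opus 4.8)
The plan is to deduce the equivalence formally from the facts that both $f_{\sigma}$ and $g_{\sigma}$ are inclusion preserving and that $f_{\sigma}\circ g_{\sigma}=1_{op}$; it is the exact analogue of Lemma \ref{pre2}(2) for the closed case, except that every inclusion is reversed because $\cN_{X,\sigma}(x)$ is built from the complement $X\setminus\overline{\{x\}}$ rather than from a specialization-closed set. First I would record the two elementary translations used throughout: in any topological space $x\in\overline{\{y\}}$ is equivalent to $\overline{\{x\}}\subseteq\overline{\{y\}}$, hence to $X\setminus\overline{\{y\}}\subseteq X\setminus\overline{\{x\}}$; and, since $(X,\sigma)$ is open, each $X\setminus\overline{\{x\}}$ lies in the family $op$, so the hypothesis yields $f_{\sigma}\bigl(\cN_{X,\sigma}(x)\bigr)=f_{\sigma}g_{\sigma}\bigl(X\setminus\overline{\{x\}}\bigr)=X\setminus\overline{\{x\}}$ for every $x\in X$.

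For the implication ``$\Rightarrow$'', assuming $x\in\overline{\{y\}}$ I would pass to $X\setminus\overline{\{y\}}\subseteq X\setminus\overline{\{x\}}$ and apply the inclusion preserving map $g_{\sigma}$, getting $\cN_{X,\sigma}(y)=g_{\sigma}(X\setminus\overline{\{y\}})\subseteq g_{\sigma}(X\setminus\overline{\{x\}})=\cN_{X,\sigma}(x)$. This half uses only monotonicity of $g_{\sigma}$, and not the collapse $f_{\sigma}g_{\sigma}=1_{op}$.

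For ``$\Leftarrow$'', starting from $\cN_{X,\sigma}(y)\subseteq\cN_{X,\sigma}(x)$ I would apply $f_{\sigma}$ (also inclusion preserving) and then use the identity $f_{\sigma}(\cN_{X,\sigma}(-))=X\setminus\overline{\{-\}}$ recorded above, obtaining $X\setminus\overline{\{y\}}\subseteq X\setminus\overline{\{x\}}$, hence $\overline{\{x\}}\subseteq\overline{\{y\}}$ and therefore $x\in\overline{\{x\}}\subseteq\overline{\{y\}}$.

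I do not expect a genuine obstacle here: once $f_{\sigma}g_{\sigma}=1_{op}$ is available, the statement is a purely formal consequence of the order-theoretic behaviour of the pair $(f_{\sigma},g_{\sigma})$. The only things to watch are bookkeeping ones: getting the direction of the inclusions right relative to the closed-support version, and noting that $f_{\sigma}g_{\sigma}=1_{op}$ is invoked only on opens of the special shape $X\setminus\overline{\{x\}}$, all of which belong to $op$ by construction.
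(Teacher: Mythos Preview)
Your proof is correct and follows essentially the same approach as the paper: both reduce $x\in\overline{\{y\}}$ to the inclusion $X\setminus\overline{\{y\}}\subseteq X\setminus\overline{\{x\}}$ and then translate via $g_{\sigma}$ and $f_{\sigma}$. Your version is simply more explicit, separating the two implications and noting that only the ``$\Leftarrow$'' direction actually uses the hypothesis $f_{\sigma}\circ g_{\sigma}=1_{op}$, whereas the paper compresses everything into a single chain of equivalences.
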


\begin{proof} Indeed, \[x\in\overline{\{y\}}\Leftrightarrow \overline{\{x\}}\subseteq\overline{\{y\}}\Leftrightarrow X\setminus\overline{\{x\}}\supseteq X\setminus\overline{\{y\}}\] hence if and only if $\cN_{X,\sigma}(x)=g_{\sigma}(X\setminus\overline{\{x\}})\supseteq g_{\sigma}(X\setminus\overline{\{y\}})=\cN_{X,\sigma}(y)$. \end{proof}

\begin{lem} \label{optech} Suppose $(X,\sigma)$ is an open support for $\cE$ and $x\in X$ be any point. Then $X\setminus\overline{\{x\}}=\sigma(E)$ for some $E\in\cE$ implies $X\setminus\overline{\{x\}}\subseteq f_{\sigma}g_{\sigma}(X\setminus\overline{\{x\}})$ (so in fact $X\setminus\overline{\{x\}}=f_{\sigma}g_{\sigma}(X\setminus\overline{\{x\}})$). The converse holds if $X$ is Noetherian. \end{lem}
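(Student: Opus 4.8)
The plan is to write $V:=X\setminus\overline{\{x\}}$, which is an open subset of $X$, and to treat the two implications separately. The first thing I would record is that $f_{\sigma}g_{\sigma}(V)\subseteq V$ holds unconditionally, straight from the definitions of $f_{\sigma}$ and $g_{\sigma}$; so the parenthetical equality in the statement is automatic once the displayed inclusion $V\subseteq f_{\sigma}g_{\sigma}(V)$ has been established (in either form).

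For the forward implication I would argue exactly as in the proof of Lemma \ref{tech}. If $V=\sigma(E)$ for some $E\in\cE$, then $\sigma(E)=V\subseteq V$ shows $E\in g_{\sigma}(V)$, and therefore $V=\sigma(E)\subseteq\bigcup_{N\in g_{\sigma}(V)}\sigma(N)=f_{\sigma}g_{\sigma}(V)$. Neither openness of $\sigma$ nor the Noetherian hypothesis is used here.

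For the converse I would assume $X$ Noetherian together with $V\subseteq f_{\sigma}g_{\sigma}(V)$, so that $V=f_{\sigma}g_{\sigma}(V)=\bigcup_{N\in g_{\sigma}(V)}\sigma(N)$. Because $(X,\sigma)$ is an open support, each $\sigma(N)$ is open, so this displays $V$ as a union of open subsets of $X$, i.e.\ an open cover of $V$. Since $V$ is open in the Noetherian space $X$, it is quasi-compact by Lemma \ref{nsspace}(1); hence finitely many of the $\sigma(N)$ already cover $V$, say $V=\sigma(N_1)\cup\cdots\cup\sigma(N_r)$ with each $N_i\in g_{\sigma}(V)$ (one may take $r=1$ and $N_1=0$ when $V=\emptyset$, using $\sigma(0)=\emptyset$). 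Finally the additivity axiom $\sigma(E\oplus F)=\sigma(E)\cup\sigma(F)$ collapses the finite union to a single object: $V=\sigma(N_1\oplus\cdots\oplus N_r)$, so $V=\sigma(E)$ with $E:=N_1\oplus\cdots\oplus N_r$.

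I expect no serious obstacle; the one point worth isolating is that the Noetherian assumption is used for exactly one purpose, namely quasi-compactness of the open set $V$, which is what turns the a priori infinite cover $\{\sigma(N):N\in g_{\sigma}(V)\}$ into a finite one that can then be amalgamated into a single object via $\oplus$. Everything else is routine bookkeeping with the definitions of $f_{\sigma}$, $g_{\sigma}$ and the support axioms.
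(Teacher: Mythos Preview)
Your proof is correct and follows the same approach as the paper: the forward direction is immediate from $E\in g_{\sigma}(V)$, and the converse uses the Noetherian hypothesis to extract a finite subcover of $V$ from $\{\sigma(N):N\in g_{\sigma}(V)\}$ and then amalgamates via $\oplus$. One minor remark: your citation of Lemma~\ref{nsspace}(1) is slightly off since that lemma is stated for Noetherian \emph{spectral} spaces, whereas here $X$ is only assumed Noetherian; but quasi-compactness of subsets of Noetherian spaces is the standard fact actually being used (and is how the paper itself argues), so this does not affect the argument.
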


\begin{proof} The first statement is obvious. Conversely, suppose \[X\setminus\overline{\{x\}}=f_{\sigma}g_{\sigma}(X\setminus\overline{\{x\}}) =\bigcup\limits_{\begin{matrix} M \text{ s.t.} \\ \sigma(M)\subseteq X\setminus\overline{\{x\}} \end{matrix} } \sigma(M)\] the latter is a covering of $X\setminus\overline{\{x\}}$ so there is a finite subcover $\bigcup\limits_{i=1}^n\sigma(M_i)=\sigma(M_1\oplus\cdots M_n)$ for some $i$ as $X$ is Noetherian (therefore compact). \end{proof}

\begin{defn} \label{opclassifying} An open support $(X,\sigma)$ on $\cE$ is \textbf{classifying} if \begin{enumerate} \item $X$ is $T_0$ and $\{\sigma(E)\}_{E\in\cE}$ form an open basis for $X$; \item If, for any $x\in X$, we have $X\setminus\overline{\{x\}}=\sigma(E)$ for some $E\in\cE$. \end{enumerate} \end{defn}

For any classifying open support $(X,\sigma)$, we get $X\setminus\overline{\{x\}}=f_{\sigma}g_{\sigma} (X\setminus\overline{\{x\}})$ for any $x\in X$ by Lemma \ref{optech}, hence

\begin{lem} \label{opsupported} Let $(X,\sigma)$ be a classifying open support on $\cE$. Then the natural inclusion preserving maps $f_{\sigma}: as(\cE) \rightleftarrows open(X): g_{\sigma}$ restrict to inclusion preserving bijections $f_{\sigma}: C_{X,\sigma} \rightleftarrows \{X\setminus\overline{\{x\}}\}_{x\in X}: g_{\sigma}$. \end{lem}

\begin{proof} This is Straightforward. Recall $C_{X,\sigma}=\{\cP_{X,\sigma}(x): x\in X\}$ and by Lemma \ref{pre1} $C_{X,\sigma}=\{\cN_{X,\sigma}(x): x\in X\}=\{g_{\sigma}(X\setminus\overline{\{x\}}): x\in X\}$ as $(X,\sigma)$ is open. \end{proof}

Suppose $Esp_C(\cE)$ is a spectral space for some $C\subseteq as(\cE)$ and $(X,\sigma)$ a $C$-compatible open support for $\cE$, there is a natural map \[j^{\vee}: X\xra{} (Esp_C(\cE))^{\vee} \] \[\hp{0.58}x\mapsto\cP_{X,\sigma}(x)=\cN_{X,\sigma}(x)\] Note that $((Esp_C(\cE))^{\vee}, supp_C)$ is a spectral $C$-compatible open support (Lemma \ref{bopen}) in this situation.

\begin{thm} \label{opimmersion} Let $(X,\sigma)$ be a $C$-compatible open support for some $C\subseteq as(\cE)$. If $Esp_C(\cE)$ is spectral (so $j^{\vee}$ can be defined). Then

\begin{enumerate}

\item $j^{\vee}$ is a morphism of supports, that is, $((Esp_C(\cE))^{\vee}, supp_C)$ is the final support among all $C$-compatible open supports.

\item $j^{\vee}$ is an immersion, if $X$ is $T_0$, $f_{\sigma}\circ g_{\sigma}=id_{op}$, and $\{\sigma(E)\}_{E\in\cE}$ form an open basis for $X$. Moreover, $j^{\vee}$ is a homeomorphism if and only if $j^{\vee}$ is surjective, that is, $C_{X,\sigma}=C$.

\item In addition, if $X$ is spectral, $X^{\vee}$ is Noetherian and any open subset of $X$ has the form $\sigma(E)$ for some $E\in\cE$, then $j^{\vee}$ induces another immersion $j:X^{\vee}\xra{}Esp_C(\cE)$.

\end{enumerate}
\end{thm}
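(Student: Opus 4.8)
The plan is to prove Theorem \ref{opimmersion} in three parts, treating each numbered claim in turn, and reusing the closed-support machinery from Section \ref{close} as much as possible via Hochster duality.

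For part (1), I would first check that $j^{\vee}$ is well-defined: by $C$-compatibility $\cP_{X,\sigma}(x)\in C$ for every $x$, and by Lemma \ref{pre1}(2) (since $(X,\sigma)$ is open) we have $\cP_{X,\sigma}(x)=\cN_{X,\sigma}(x)$, so the formula makes sense. To see $j^{\vee}$ is a morphism of supports I would compute the preimage of a basic open of $(Esp_C(\cE))^{\vee}$. By Lemma \ref{bopen} these basic opens are the finite intersections $\bigcap_{1\leqslant i\leqslant n} supp_C(M_i)$, so it suffices to handle a single $supp_C(M)$: exactly as in the proof of Theorem \ref{immersion}, $(j^{\vee})^{-1}(supp_C(M))=\{x\in X: M\notin\cP_{X,\sigma}(x)\}=\sigma(M)$, which is open in $X$ since $(X,\sigma)$ is an open support. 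Hence $j^{\vee}$ is continuous, and the identity $\sigma(M)=(j^{\vee})^{-1}(supp_C(M))$ is precisely the condition for a morphism of supports; finality then follows formally because any $C$-compatible open support $(X,\sigma)$ receives such a map and it is unique by the $T_0$ property of $(Esp_C(\cE))^{\vee}$.

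For part (2), injectivity of $j^{\vee}$ runs parallel to Theorem \ref{immersion}(2) but with the open-support lemmas: if $\cP_{X,\sigma}(x)=\cP_{X,\sigma}(y)$ then $\cN_{X,\sigma}(x)=\cN_{X,\sigma}(y)$, so $g_{\sigma}(X\setminus\overline{\{x\}})=g_{\sigma}(X\setminus\overline{\{y\}})$, and applying $f_{\sigma}$ together with $f_{\sigma}\circ g_{\sigma}=id_{op}$ gives $X\setminus\overline{\{x\}}=X\setminus\overline{\{y\}}$, i.e.\ $\overline{\{x\}}=\overline{\{y\}}$, whence $x=y$ since $X$ is $T_0$. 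That $j^{\vee}$ is a topological embedding follows because $\{\sigma(E)\}_{E\in\cE}$ is an open basis for $X$ by hypothesis while $\{supp_C(E)\}_{E\in\cE}$ is an open basis for $(Esp_C(\cE))^{\vee}$ by Lemma \ref{bopen}, and $j^{\vee}$ matches these bases via $(j^{\vee})^{-1}(supp_C(E))=\sigma(E)$; so the subspace topology on the image agrees with the topology of $X$. The homeomorphism-iff-surjective claim: surjectivity means every $\cP\in Esp_C(\cE)$ equals $\cP_{X,\sigma}(x)$ for some $x$, i.e.\ $C_{X,\sigma}=C$, and a continuous open (onto its image) bijection onto the whole space is a homeomorphism.

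For part (3), the idea is to dualize: since $X$ is spectral, $X^{\vee}$ carries its Hochster dual topology, and $(X^{\vee})^{\vee}=X$; the hypothesis that $X^{\vee}$ is Noetherian together with Lemma \ref{nsspace} says the open subsets of $X^{\vee}$ are exactly the specialization-closed subsets of $X$, dually the open subsets of $X=(X^{\vee})^{\vee}$ are unions of closed subsets of $X^{\vee}$. I would set up an open support on $X^{\vee}$: the hypothesis that every open subset of $X$ has the form $\sigma(E)$ says $(X,\sigma)$ is essentially classifying in the sense of Definition \ref{opclassifying}, and one transports this to a closed support $(X^{\vee},\sigma^{\vee})$ where $\sigma^{\vee}(E):=\sigma(E)$ viewed as a subset of $X^{\vee}$ — this is closed in $X^{\vee}$ by Lemma \ref{nsspace}(2). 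Then I would verify $(X^{\vee},\sigma^{\vee})$ is a $C$-compatible closed support satisfying the hypotheses of Theorem \ref{immersion}(2): $C$-compatibility is inherited since $\cP_{X^{\vee},\sigma^{\vee}}(x)=\cP_{X,\sigma}(x)$ as sets of objects; $X^{\vee}$ is Noetherian by hypothesis; soberness of $X^{\vee}$ follows since it is Noetherian and $T_0$ (Hochster dual of a spectral space is spectral, hence $T_0$); the closed basis $\{\sigma^{\vee}(E)\}$ comes from Lemma \ref{nsspace} converting the open basis $\{\sigma(E)\}$ of $X$ into a closed basis of $X^{\vee}$; and $f_{\sigma^{\vee}}\circ g_{\sigma^{\vee}}=1_{pm(X^{\vee})}$ should follow from the given $f_{\sigma}\circ g_{\sigma}=id_{op}$ via the duality between $\{X\setminus\overline{\{x\}}\}$ in $X$ and $\{$closures of points$\}$ in $X^{\vee}$ together with the representability hypothesis. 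Then Theorem \ref{immersion}(2) yields the immersion $j:X^{\vee}\to Esp_C(\cE)$. The main obstacle I anticipate is the bookkeeping in part (3): carefully matching the specialization orders — $x\in\overline{\{y\}}$ in $X^{\vee}$ iff $y\in\overline{\{x\}}$ in $X$ — and checking that the representability hypothesis on open subsets of $X$ translates correctly into the Lemma \ref{tech}-style condition ``$\overline{\{x\}}=\sigma^{\vee}(E)$ for some $E$'' needed for $(X^{\vee},\sigma^{\vee})$ to be classifying, since $\overline{\{x\}}$ in $X^{\vee}$ is the complement of an open set of $X$ of the form $\sigma(E)$, so one must confirm the complement is again $\sigma$ of something or argue directly via the $f\circ g$ identity; getting the quantifiers in ``$W_x$'' versus ``$X\setminus\overline{\{x\}}$'' aligned under the dual is the delicate point.
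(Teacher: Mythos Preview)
Your treatment of parts (1) and (2) follows the paper's proof closely; the only slip is the claim that $\{supp_C(E)\}_{E\in\cE}$ is an open \emph{basis} for $(Esp_C(\cE))^{\vee}$ --- Lemma~\ref{bopen} only says these sets are open and that their finite intersections form a basis. This does not damage your argument, since for the embedding you only need that each basic open $\sigma(E)$ of $X$ is the preimage of some open set in the target, and $(j^{\vee})^{-1}(supp_C(E))=\sigma(E)$ gives exactly that.

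For part (3) you take a genuinely different route from the paper. The paper does \emph{not} set up a new closed support $(X^{\vee},\sigma^{\vee})$ and invoke Theorem~\ref{immersion}; instead it observes that $j$ and $j^{\vee}$ are the same set-theoretic map, so the identities $j^{-1}(supp_C(E))=\sigma(E)$ and $j(\sigma(E))=supp_C(E)\cap j(X^{\vee})$ already established in (1)--(2) carry over verbatim. Continuity of $j$ then reduces to the single check that $\sigma(E)$ is closed in $X^{\vee}$ (via Lemma~\ref{nsspace}), and the embedding property follows because every closed subset of $X^{\vee}$ is an intersection $\bigcap_{i}\sigma(E_i)$ (Lemma~\ref{nsspace} plus the hypothesis that every open of $X$ is some $\sigma(E)$), whence injectivity of $j$ gives $j\bigl(\bigcap_i\sigma(E_i)\bigr)=\bigl(\bigcap_i supp_C(E_i)\bigr)\cap j(X^{\vee})$. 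Your approach can in fact be completed: once $\{\sigma(E)\}$ is known to be a closed basis for $X^{\vee}$, one has $\overline{\{x'\}}^{X^{\vee}}=\bigcap_{x'\in\sigma(E)}\sigma(E)$, so $W_x^{X^{\vee}}=\bigcup_{x\notin\sigma(E)}\sigma(E)=f_{\sigma^{\vee}}(\cP_{X,\sigma}(x))$ and the condition $f_{\sigma^{\vee}}\circ g_{\sigma^{\vee}}=1_{pm(X^{\vee})}$ drops out automatically --- the ``delicate point'' you flag is therefore less troublesome than you feared. The paper's direct argument is simply more economical: it reuses (1)--(2) wholesale instead of re-verifying the hypotheses of a separate theorem.
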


\begin{proof} Note that we have $(j^{\vee})^{-1}(supp_C(E))=(j^{\vee})^{-1}(\{\cP\in Esp_C(\cE): E\notin\cP\})=\{x\in X: E\notin\cP_{X,\sigma}(x)\}=\sigma(E)\quad (*)$.

\begin{enumerate}

\item By Lemma \ref{bopen}, take a basic open subset $\bigcap\limits_{1\leqslant i\leqslant n} supp_C(M_i)$ of $(Esp_C(\cE))^{\vee}$, then \[(j^{\vee})^{-1}(\bigcap\limits_{1\leqslant i\leqslant n} supp_C(M_i))=\{x\in X: \cP_{X,\sigma}(x)\in supp_C(M_i) \text{ for all } i\}\] \[=\bigcap\limits_{1\leqslant i\leqslant n} (j^{\vee})^{-1}(supp_C(M_i))\] \[=\bigcap\limits_{1\leqslant i\leqslant n} \sigma(M_i)\] is also open so $j^{\vee}$ is continuous, and therefore a morphism of supports (because $(*)$ also holds).

\item To show $j^{\vee}$ is injective, note that $\cN_{X,\sigma}(x)=\cN_{X,\sigma}(y)$ if and only if $\overline{\{x\}}=\overline{\{y\}}$ by Lemma \ref{pre3}, and the latter is equivalent to $x=y$ since $X$ is $T_0$. To show it's an immersion, it's enough to show $j^{\vee}(\sigma(E))=supp_C(E)\cap j^{\vee}(X)$ for any $E\in\cE$. Indeed, for any $\cP_{X,\sigma}(x)\in j^{\vee}(\sigma(E))=\{\cP_{X,\sigma}(x): x\in\sigma(E)\}$, we have $E\notin \cP_{X,\sigma}(x)$. We also have $\cP_{X,\sigma}(x)\in C$ as $(X,\sigma)$ is $C$-compatible so together gives $\cP_{X,\sigma}(x)\in supp_CE$. Conversely, let $\cP\in supp_C(E)\cap j^{\vee}(X)$, so \begin{enumerate} \item From $\cP\in j^{\vee}(X)$ we get $\cP=\cP_{X,\sigma}(x)$ for some $x\in X$, and \item From $\cP\in supp_CE$ we get $E\notin\cP\stackrel{(\text{a})}{=}\cP_{X,\sigma}(x)$, i.e., $x\in\sigma(E)$. \end{enumerate} Together gives $\cP=\cP_{X,\sigma}(x)$ from some $x\in\sigma(E)$, i.e., $\cP\in j^{\vee}(\sigma(E))$. Now, for any open subset $U=\bigcup\limits_i\sigma(E_i)$ in $X$, we have $j^{\vee}(\bigcup\limits_i\sigma(E_i))=\bigg(\bigcup\limits_i supp_C(E_i)\bigg)\cap j^{\vee}(X)$, i.e, $j^{\vee}$ is a homeomorphism onto the subspace $j^{\vee}(X)$ of $(Esp_C(\cE))^{\vee}$.

\item Note that $j$ and $j^{\vee}$ are the same map with different topologies in the source and target so in particular $j$ is also injective and \[j^{-1}(supp_C(E))=\sigma(E)\] \[j(\sigma(E))=supp_C(E)\cap j(X^{\vee})\] from previous discussion. The first equality implies $j$ is also continuous because $\{supp_C(E)\}_{E\in\cE}$ is a closed basis of $Esp_C(\cE)$ and $\sigma(E)$ is closed in $X^{\vee}$ ($X^{\vee}$ is Noetherian spectral and Lemma \ref{nsspace}). Also, from Lemma \ref{nsspace} and the assumption, any closed subset of $X^{\vee}$ has the form $\bigcap_{i\in I}\sigma(E_i)$ for some $E_i\in\cE$. Apply the second equality we get \[j(\bigcap_{i\in I}\sigma(E_i))=\bigcap_{i\in I}(supp_C(E_i)\cap j(X^{\vee}))\qquad \text{(since $j$ is injective)}\] \[\hp{0.38}=\bigg(\bigcap_{i\in I} supp_C(E_i)\bigg)\cap j(X^{\vee})=Z_C(\cS)\cap j(X^{\vee})\] where $\cS:=\{E_i: i\in I\}$, so the image of any closed subset in $X^{\vee}$ is the intersection of a closed subset in $Esp_C(\cE)$ with $j(X^{\vee})$. \qedhere

\end{enumerate}
\end{proof}

\begin{cor} \label{ac} Let $C\subseteq as(\cE)$ be a subset such that $Esp_C(\cE)$ is spectral, $(X,\sigma)$ is a $C$-compatible open support. Then $j^{\vee}$ is an immersion if $(X,\sigma)$ is classifying. \end{cor}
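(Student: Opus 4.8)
The plan is to show that a classifying open support automatically satisfies every hypothesis in Theorem \ref{opimmersion}(2), so the conclusion is immediate; this is the open-support analogue of the way Corollary \ref{corimer} is deduced from Theorem \ref{immersion}(2) in the closed case.

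First I would simply unpack Definition \ref{opclassifying}. Part (1) of that definition directly provides two of the three hypotheses of Theorem \ref{opimmersion}(2): the space $X$ is $T_0$, and $\{\sigma(E)\}_{E\in\cE}$ is an open basis for $X$. What remains is the third hypothesis, $f_{\sigma}\circ g_{\sigma}=id_{op}$, where $op=\{X\setminus\overline{\{x\}}\}_{x\in X}$. For this I would invoke Lemma \ref{optech}: part (2) of Definition \ref{opclassifying} says that for each $x\in X$ there is an object $E\in\cE$ with $X\setminus\overline{\{x\}}=\sigma(E)$, and Lemma \ref{optech} upgrades this to the equality $X\setminus\overline{\{x\}}=f_{\sigma}g_{\sigma}(X\setminus\overline{\{x\}})$. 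Since this holds for every $x\in X$, we get $f_{\sigma}\circ g_{\sigma}=id$ on all of $op$. (Alternatively, one may quote Lemma \ref{opsupported}, which already records that for a classifying open support the pair $f_{\sigma},g_{\sigma}$ restricts to mutually inverse bijections $C_{X,\sigma}\rightleftarrows op$.) With all three hypotheses verified, and with $C$-compatibility of $(X,\sigma)$ and spectralness of $Esp_C(\cE)$ assumed in the statement, Theorem \ref{opimmersion}(2) immediately yields that $j^{\vee}$ is an immersion.

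There is no real obstacle here, since the substantive content has been isolated in Lemma \ref{optech}, Lemma \ref{opsupported}, and Theorem \ref{opimmersion}. The only point requiring a little care is the bookkeeping around the set $op$: classifyingness gives the fixed-point identity $f_{\sigma}g_{\sigma}=id$ only on the collection $\{X\setminus\overline{\{x\}}\}_{x\in X}$, not on all of $open(X)$, so one should confirm that this is exactly the collection intended by the notation $id_{op}$ in the hypothesis of Theorem \ref{opimmersion}(2) — which it is. Once that is noted, the proof is a one-line deduction, exactly as in Corollary \ref{corimer}.
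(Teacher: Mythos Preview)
Your proposal is correct and follows essentially the same approach as the paper: the paper's proof is the one-liner ``By Definition of classifying open support and Lemma \ref{opsupported}, all hypotheses of Theorem \ref{opimmersion} (2) are satisfied,'' which is exactly what you describe (with Lemma \ref{optech} as the engine behind Lemma \ref{opsupported}). Your remark that $id_{op}$ refers to the collection $\{X\setminus\overline{\{x\}}\}_{x\in X}$ rather than all of $open(X)$ is also on target and matches how the hypothesis is actually used in the proof of Theorem \ref{opimmersion}(2) via Lemma \ref{pre3}.
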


\begin{proof} By Definition of classifying open support and Lemma \ref{opsupported}, all hypotheses of Theorem \ref{opimmersion} (2) are satisfied. \end{proof}

Any open support $(X,\sigma)$ is certainly $C_{X,\sigma}$-compatible, so

\begin{cor} If $(X,\sigma)$ is a classifying open support and suppose $Esp_{X,\sigma}(\cE):= Esp_{C_{X,\sigma}}(\cE)$ is spectral. Then there is a homeomorphism $X\cong (Esp_{X,\sigma}(\cE))^{\vee}$, which restricts to a homeomorphism $\sigma(E)\cong supp_{X,\sigma}(E)$. \end{cor}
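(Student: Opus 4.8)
The plan is to obtain this corollary as an immediate consequence of Corollary \ref{ac} together with the ``moreover'' clause of Theorem \ref{opimmersion}(2). First I would set $C := C_{X,\sigma}$. Since any open support is $C_{X,\sigma}$-compatible (as noted just before the statement), the pair $(X,\sigma)$ is a $C$-compatible open support; combined with the standing hypothesis that $Esp_{X,\sigma}(\cE) = Esp_C(\cE)$ is spectral, this places us exactly in the situation of Corollary \ref{ac}. Because $(X,\sigma)$ is a classifying open support, Corollary \ref{ac} tells us that the comparison map $j^{\vee}\colon X \to (Esp_C(\cE))^{\vee}$ is an immersion.

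Next I would check surjectivity of $j^{\vee}$, which is essentially tautological here: by definition the underlying set of $Esp_{C_{X,\sigma}}(\cE)$ is $C_{X,\sigma} = \{\cP_{X,\sigma}(x) : x \in X\}$, and $j^{\vee}$ sends $x$ to $\cP_{X,\sigma}(x)$, so $j^{\vee}$ is surjective by construction — equivalently, the condition $C_{X,\sigma} = C$ from Theorem \ref{opimmersion}(2) holds trivially. Invoking the ``moreover'' part of Theorem \ref{opimmersion}(2) then upgrades the immersion to a homeomorphism $X \cong (Esp_{X,\sigma}(\cE))^{\vee}$.

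For the statement about supports, I would simply reuse the formulas already derived inside the proof of Theorem \ref{opimmersion}: the identity labelled $(*)$ there gives $(j^{\vee})^{-1}(supp_C(E)) = \sigma(E)$, while the computation of $j^{\vee}$ on basic opens gives $j^{\vee}(\sigma(E)) = supp_C(E) \cap j^{\vee}(X)$, which equals $supp_C(E) = supp_{X,\sigma}(E)$ now that $j^{\vee}$ is known to be surjective. Hence the homeomorphism $j^{\vee}$ restricts to a homeomorphism $\sigma(E) \cong supp_{X,\sigma}(E)$ for every $E \in \cE$.

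I do not expect a real obstacle; the one place requiring care is bookkeeping — keeping the open-support notion of ``classifying'' (Definition \ref{opclassifying}) distinct from the closed-support one, being clear that $Esp_{X,\sigma}$ denotes the construction with $C = C_{X,\sigma}$ and its Hochster dual, and reading the restriction statement off the explicit formulas in the proof of Theorem \ref{opimmersion} rather than re-deriving them.
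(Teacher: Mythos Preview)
Your proposal is correct and follows essentially the same approach as the paper: the paper states this corollary immediately after the sentence ``Any open support $(X,\sigma)$ is certainly $C_{X,\sigma}$-compatible, so'' without further proof, leaving it as a direct consequence of Corollary \ref{ac} and the ``moreover'' clause of Theorem \ref{opimmersion}(2), exactly as you outline. Your extra care in spelling out the surjectivity and the restriction to $\sigma(E)$ via the formula $(*)$ simply makes explicit what the paper leaves implicit.
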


\begin{ex} \label{atom} If $\cE$ is in fact an abelian category, then the atom spectrum and support together give a classifying open support $(ASpec(\cE), ASupp)$. Indeed, $ASpec(\cE)$ is $T_0$ by Proposition 3.5 \cite{kanda2015specialization} and $\{ASupp(E)\}_{E\in\cE}$ is an open basis of $ASpec(\cE)$ by Proposition 3.2 \cite{kanda2015specialization}. In fact, the same Proposition states that all open subsets of $ASpec(\cE)$ have the form $ASupp(E)$ for some $E$ if $\cE$ is a Grothendieck category. Furthermore, the atom support $(ASpec(\cE), ASupp)$ is both $C_{loc}-$ and $C_{Serre}$-compatible in Grothendieck categories because atom support of (possible infinite) direct sums equals to the union of atom supports (Proposition 2.12 \cite{kanda2015specialization}), where $C_{loc}$ and $C_{Serre}$ are respectively the subsets of all localizing and Serre subcategories. \end{ex}

\subsubsection{Immersion of schemes for Grothendieck categories} Let $\cG$ be a locally Noetherian and locally finitely presented Grothendieck category (so $\cG$ is locally coherent, see for example Example 3.2 \cite{saorin2017locally}). For such categories, the class of finitely presented objects coincide with the class of Noetherian objects and we denote this by $Noeth(\cG)$. Following Section 14.1.2 \cite{Prest2009Purity}, the \textbf{Gabriel spectrum} $GSpec(\cG)$ is the topological space with underlying set the isomorphism classes of indecomposable injective objects $Inj(\cG)$, whose basis of open subsets are those of the form $\{X\in Inj(\cG): Hom_{\cG}(A,X)=0\}$ for $A\in Noeth(\cG)$. The \textbf{Ziegler spectrum} $ZSpec(\cG)$ of $\cG$, following \cite{herzog1997ziegler, krause1997the}, is the topological space with the same underlying set as the Gabriel spectrum but a different basis of open subsets: $\{X\in PInj(\cG): Hom_{\cG}(A,X)\neq 0\}$ for $A\in Noeth(\cG)$.

It is show in \cite{kanda2012classifying} that the atom spectrum $ASpec(\cG)$ is homeomorphic to the Ziegler spectrum $ZSpec(\cG)$ when $\cG$ is locally Noetherian. Also, by Theorem 14.1.6 \cite{Prest2009Purity} (or See Remark 4.16 \cite{bird2023homological}) the Gabriel spectrum is the Hochster dual of $ZSpec(\cG)$, when $\cG$ is locally coherent. To sum up, we have

\begin{lem} \label{agz} Suppose $\cG$ is a locally Noetherian and locally finitely presented Grothendieck category, then $ASpec(\cG)^{\vee}\cong ZSpec(\cG)^{\vee}\cong GSpec(\cG)$. \end{lem}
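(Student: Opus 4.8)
The plan is to assemble Lemma \ref{agz} directly from the two homeomorphisms quoted in the preceding paragraph, taking Hochster duals of the first and comparing with the second. Concretely, I would first recall that by \cite{kanda2012classifying} there is a homeomorphism $ASpec(\cG)\cong ZSpec(\cG)$ whenever $\cG$ is locally Noetherian; since Hochster duality is functorial with respect to homeomorphisms (a homeomorphism $f\colon X\to Y$ induces a homeomorphism $f\colon X^{\vee}\to Y^{\vee}$, because the basic opens of the dual are defined purely in terms of the closed sets and quasi-compact sets of the original topology, which $f$ preserves and reflects), this immediately upgrades to $ASpec(\cG)^{\vee}\cong ZSpec(\cG)^{\vee}$. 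That gives the first of the two claimed homeomorphisms.

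Next I would invoke Theorem 14.1.6 of \cite{Prest2009Purity} (equivalently Remark 4.16 of \cite{bird2023homological}), which says that for a locally coherent Grothendieck category the Gabriel spectrum $GSpec(\cG)$ is precisely the Hochster dual of the Ziegler spectrum $ZSpec(\cG)$, i.e. $GSpec(\cG)\cong ZSpec(\cG)^{\vee}$. Here I should note that the hypothesis ``locally Noetherian and locally finitely presented'' does imply ``locally coherent'' — this is exactly the point flagged just before the lemma via Example 3.2 of \cite{saorin2017locally} — so the cited theorem applies. Chaining the two isomorphisms $ASpec(\cG)^{\vee}\cong ZSpec(\cG)^{\vee}\cong GSpec(\cG)$ then yields the statement.

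I would also double-check the one small subtlety about whether the Hochster dual is even well-defined (i.e. whether $ZSpec(\cG)$ is spectral, or at least whether the construction ``basic opens $=$ closed sets with quasi-compact complement'' produces the right space) — but since the statement is phrased purely as homeomorphisms $X^{\vee}\cong Y^{\vee}$ and the cited results already package $GSpec$ as $ZSpec^{\vee}$, I do not actually need spectrality here; it suffices that the dual-topology construction is a topological invariant, which is the observation in the first paragraph. So the proof is essentially a two-line citation-chaining argument.

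The only genuine obstacle is bookkeeping about hypotheses: making sure that ``locally Noetherian'' is enough for Kanda's $ASpec\cong ZSpec$ (it is), and that ``locally Noetherian $+$ locally finitely presented $\Rightarrow$ locally coherent'' so that Prest's identification of $GSpec$ with $ZSpec^{\vee}$ is licensed. Once those are in place, there is no real content beyond transporting Hochster duality across a homeomorphism, which I expect to dispatch in one sentence. I would write the proof as: ``By \cite{kanda2012classifying}, $ASpec(\cG)\cong ZSpec(\cG)$ since $\cG$ is locally Noetherian, hence $ASpec(\cG)^{\vee}\cong ZSpec(\cG)^{\vee}$ as Hochster duality is a homeomorphism invariant. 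Since $\cG$ is locally coherent, $GSpec(\cG)\cong ZSpec(\cG)^{\vee}$ by Theorem 14.1.6 \cite{Prest2009Purity}. Combining gives the claim.''
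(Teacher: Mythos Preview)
Your proposal is correct and matches the paper's approach exactly: the paper does not even give a separate proof environment for this lemma, but simply states it as a summary (``To sum up, we have'') of the two cited facts in the preceding paragraph --- Kanda's homeomorphism $ASpec(\cG)\cong ZSpec(\cG)$ for locally Noetherian $\cG$, and Prest's identification $GSpec(\cG)\cong ZSpec(\cG)^{\vee}$ for locally coherent $\cG$ --- together with the observation that locally Noetherian plus locally finitely presented implies locally coherent. Your extra care about Hochster duality being a homeomorphism invariant is a welcome clarification that the paper leaves implicit.
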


Suppose $X$ is a Noetherian scheme, then the category of quasi-coherent sheaves $QCoh(X)$ on $X$ is a locally Noetherian Grothendieck category, it's also locally finitely presented (Corollary 6.9.12 \cite{dieudonne1971elements} or Proposition 7 \cite{garkusha2010classifying}). The Gabriel spectrum of $QCoh(X)$ is homeomorphic to $X$ \cite{gabriel1962thesis}. Also, a localizing subcategory $\cL$ of a Grothendieck category $\cG$ is \textbf{prime} in the sense of Kanda (See Definition 6.3 \cite{kanda2015specialization}) if $\cG/\cL$ is \textbf{local}, that is, there is a simple object $S$ in $\cG/\cL$ such that the injective envelope of $S$ is a cogenerator of $\cG/\cL$. By Theorem 6.8 \cite{kanda2015specialization}, $C_{ASpec(\cG),ASupp}=C_{ploc}$, the collection of all prime localizing subcategories. We have

\begin{cor} Let $X$ be a Noetherian scheme, then there is an immersion \[j^{\vee}: ASpec(QCoh(X))\cong GSpec(QCoh(X))^{\vee}\cong X^{\vee} \xra{} Esp_{Serre}(QCoh(X))^{\vee}\] and this induces an immersion $j: X\xra{} Esp_{Serre}(QCoh(X))$. The image of the immersion is $C_{ploc}$, that is, $X\cong Esp_{ploc}(QCoh(X))$. \end{cor}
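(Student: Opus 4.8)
The plan is to assemble this corollary purely from results already established in the excerpt, so essentially no new computation is needed; the work is in checking that each hypothesis of Theorem \ref{opimmersion} is met for the open support $(ASpec(QCoh(X)), ASupp)$ on the Grothendieck category $\cG = QCoh(X)$. First I would record that $QCoh(X)$ is a locally Noetherian, locally finitely presented Grothendieck category (citing Corollary 6.9.12 of \cite{dieudonne1971elements} or Proposition 7 of \cite{garkusha2010classifying}), so Lemma \ref{agz} applies and gives the homeomorphisms $ASpec(QCoh(X))^{\vee} \cong ZSpec(QCoh(X))^{\vee} \cong GSpec(QCoh(X))$; combined with Gabriel's reconstruction \cite{gabriel1962thesis} identifying $GSpec(QCoh(X)) \cong X$, this yields $ASpec(QCoh(X)) \cong GSpec(QCoh(X))^{\vee} \cong X^{\vee}$, explaining the chain of identifications in the statement.

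Next I would invoke Example \ref{atom}: for any abelian category the pair $(ASpec(\cE), ASupp)$ is a classifying open support, and moreover in a Grothendieck category it is $C_{Serre}$-compatible because atom support sends direct sums to unions of atom supports (Proposition 2.12 of \cite{kanda2015specialization}). So $(ASpec(QCoh(X)), ASupp)$ is a $C_{Serre}$-compatible classifying open support. To run Theorem \ref{opimmersion}, I also need $Esp_{Serre}(QCoh(X))$ to be a spectral space: this is exactly Example \ref{serre}, which shows the Serre spectrum of any abelian category is spectral via Corollary \ref{spectral}. With these in hand, Corollary \ref{ac} immediately gives that $j^{\vee}: ASpec(QCoh(X)) \to (Esp_{Serre}(QCoh(X)))^{\vee}$ is an immersion, which is the first displayed immersion (after rewriting the source via the homeomorphisms above).

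For the induced immersion $j: X \to Esp_{Serre}(QCoh(X))$, I would appeal to part (3) of Theorem \ref{opimmersion}, whose hypotheses are: $X = ASpec(QCoh(X))$ is spectral, $X^{\vee}$ is Noetherian, and every open subset of $X$ has the form $\sigma(E)$. Spectrality of $ASpec(QCoh(X))$ follows from Example \ref{serre} combined with the homeomorphism $ASpec(QCoh(X)) \cong Esp_{ploc}(QCoh(X))$ (or directly: it is homeomorphic to the Noetherian scheme $X$, hence Noetherian spectral by the examples in Section \ref{sec2}); $X^{\vee} \cong GSpec(QCoh(X)) \cong X$ is the underlying space of a Noetherian scheme, hence Noetherian; and the fact that every open subset of $ASpec(\cG)$ is an atom support $ASupp(E)$ for a Grothendieck category is precisely Proposition 3.2 of \cite{kanda2015specialization}, as already noted in Example \ref{atom}. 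Thus Theorem \ref{opimmersion}(3) delivers the immersion $j: X^{\vee}{}^{\vee} = X \to Esp_{Serre}(QCoh(X))$ — here using that for a Noetherian spectral space the double Hochster dual returns the original space.

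Finally, for the image computation I would use that any open support is $C_{X,\sigma}$-compatible, so by the ``moreover'' clause of Theorem \ref{opimmersion}(2) the map $j^{\vee}$ is a homeomorphism onto its image precisely when $C_{X,\sigma} = C$, and the relevant $C_{X,\sigma}$ here is $C_{ASpec(\cG),ASupp}$, which by Theorem 6.8 of \cite{kanda2015specialization} equals $C_{ploc}$, the set of prime localizing subcategories. Since $C_{ploc} \subseteq C_{Serre}$, restricting the Serre spectrum to this subset gives $ASpec(QCoh(X)) \cong Esp_{ploc}(QCoh(X))$ as spaces (and $X \cong Esp_{ploc}(QCoh(X))$ after passing to Hochster duals, using that $ploc$-primes being Serre means the $Esp$-topology agrees up to the dual), which is the last claim. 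The main obstacle I anticipate is bookkeeping the Hochster duals correctly — making sure that the direction of each $(-)^{\vee}$ matches between the ``open support'' framework (where $j^{\vee}$ lands in the dual) and the scheme side (where $GSpec$ is itself already a dual of $ZSpec \cong ASpec$), and in particular justifying the ``double dual is the identity'' steps, which is legitimate here only because all the spaces involved are Noetherian spectral.
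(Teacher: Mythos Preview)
Your proposal is correct and follows essentially the same route as the paper: invoke Example \ref{atom} and Example \ref{serre} to feed Corollary \ref{ac} for the immersion $j^{\vee}$, use Lemma \ref{agz} plus Gabriel for the identifications with $X^{\vee}$, apply Theorem \ref{opimmersion}(3) for the induced immersion $j$, and read off the image as $C_{ploc}$ from Kanda's Theorem 6.8. The only caveat is notational: when you check the hypotheses of Theorem \ref{opimmersion}(3) you overload ``$X$'' to mean both the scheme and the support space $ASpec(QCoh(X))$, which makes lines like ``$X^{\vee}\cong GSpec(QCoh(X))\cong X$'' look false at first glance, and your first justification of spectrality via $Esp_{ploc}$ is circular (the direct argument via $ASpec(QCoh(X))\cong X^{\vee}$ being the Hochster dual of a spectral space is the right one).
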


\begin{proof} $j^{\vee}$ is an immersion by Corollary \ref{ac} since $(ASpec(QCoh(X)), ASupp)$ is a classifying $C_{Serre}$-compatible open support (as recalled above in Example \ref{atom}) and $Esp_{Serre}(\cE)$ is spectral for any extriangulated category $\cE$ by Example \ref{serre}. The identification with Gabriel spectrum is Lemma \ref{agz}. Finally, $j$ is an immersion by Theorem \ref{opimmersion} (3). Note that all open subsets in the atom spectrum have the form $ASupp(E)$ for some $E\in\cE$ (again, this has been recalled in Example \ref{atom}). \end{proof}

A similar immersion from a Noetherian scheme $X$ to $Esp_{Serre}(Coh\,X)$, the Serre spectrum of its coherent sheaves category, is discussed in Section 5.3 \cite{saito2024spectrum} and the image is identify with the \textbf{meet-irreducible} Serre subcategories of $Coh\,X$.

\vp{0.3}

\section{Group action}

We discuss spectra for extriangulated categories under finite group actions in this section. Let $\cE$ to be an extriangulated category and $G$ be a group (written multiplicatively and with identity element $e$) acting on $\cE$, that is, we have

\begin{itemize}

\item an exact equivalence $T_g: \cE\xra{}\cE$ for each $g\in G$. (Recall an additive functor between extriangulated categories is \textbf{exact} if it takes conflations to conflations).
\item for any pair of elements $g,h\in G$, there is a natural isomorphism $\gamma_{g,h}:T_gT_h\xra{\cong} T_{gh}$ such that the diagram \[\xymatrix{ T_gT_hT_k\ar[r]^{T_g\gamma{h,k}} \ar[d]_{\gamma_{g,h}T_k} & T_gT_{hk}\ar[d]^{\gamma{g,hk}} \\ T_{gh}T_k\ar[r]_{\gamma_{gh,k}} & T_{ghk} }\] commutes for any $g,h,k\in G$.

\end{itemize}

\begin{rem} We require the equivalences $T_g:\cE\xra{}\cE$ to be exact equivalences for all $g\in G$. One can find examples of such actions for abelian categories in \cite{sun2019note} and triangulated categories in \cite{huang2023group}. \end{rem}

It follows from the definition that the equivalence $T_e$ is naturally isomorphic to the identity functor $id:\cE\xra{}\cE$. A \textbf{$G$-equivariant object} in $\cE$ is a pair $(E,\varphi)$ consisting of an object $E\in\cE$ and a family $\varphi=\{\varphi_g\}_{g\in G}$ of isomorphisms $\varphi_g: T_gE\cong E$ such that the diagram \[\xymatrix{T_gT_hE\ar[r]^{\gamma_{g,h}^E}\ar[d]_{T_g\varphi_h} & T_{gh}E\ar[d]^{\varphi_{gh}} \\ T_gE\ar[r]_{\varphi_g} & E }\] commutes for any $g,h\in G$. The \textbf{equivariantization} $\cE^G$ of $\cE$ is the category with objects the $G$-equivariant objects and morphisms $(E,\varphi_E)\xra{}(F,\varphi_F)$ are morphisms $E\xra{}F$ in $\cE$ that commute with the action.

\begin{lem} (Lemma 4.6 (2) (4) \cite{drinfeld2010braided}) \label{drinfeld} Suppose $G$ is a finite group acting on an idempotent complete $\mathbb{K}$-linear category $\cC$ with $\mathbb{K}$ an algebraically closed field of characteristic zero. Then there are two natural adjoint functors $Ind\tl\mathscr{F}\tl Ind$ between $\cC$ and $\cC^G$: \[\xymatrix{\cC\ar@/^/[rr]^{Ind} && \cC^G \ar@/^/[ll]^{\mathscr{F}} \\ A\ar@{|->}[rr] && (\bigoplus_g T_gA, \tau^A) \\ A && (A,\varphi)\ar@{|->}[ll] }\] where $\tau^A=\{\tau_h^A\}_{h\in G}$ is the evident equivariant structure given by $\tau_h^A=\bigoplus_g \gamma_{h,h^{-1}g}^A: T_h(\bigoplus_g T_gA)\xra{\cong} \bigoplus_g T_gA$, and each object $(A,\varphi)\in\cC^G$ is a direct summand of $Ind(A)$. \end{lem}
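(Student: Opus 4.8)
The plan is to exhibit the two functors explicitly, verify the pair of adjunctions by comparing hom-sets summand by summand, and then read off the retraction from the unit and counit using that $|G|$ is invertible in $\mathbb{K}$. Throughout, $\mathscr{F}\colon\cC^G\to\cC$ is the forgetful functor $(A,\varphi)\mapsto A$ and $Ind\colon\cC\to\cC^G$ is $A\mapsto(\bigoplus_{g\in G}T_gA,\tau^A)$ with $\tau^A_h=\bigoplus_g\gamma^A_{h,h^{-1}g}$. The first thing I would check is that $\tau^A$ really is an equivariant structure: the defining square for $(\bigoplus_g T_gA,\tau^A)$ unwinds, one summand at a time, into instances of the coherence pentagon for $\{\gamma_{g,h}\}$, so this step is pure index bookkeeping.

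Next I would establish $Ind\dashv\mathscr{F}$. A $\cC^G$-morphism $Ind(A)\to(B,\varphi)$ is a $\cC$-morphism $f=(f_g)_g\colon\bigoplus_g T_gA\to B$ intertwining $\tau^A$ with $\varphi$; comparing the $T_hT_kA$-summands in the intertwining identity forces $f_h=\varphi_h\circ T_h(f_e)\circ(\gamma^A_{h,e})^{-1}$, so $f\mapsto f_e$ is a bijection $\mathrm{Hom}_{\cC^G}(Ind(A),(B,\varphi))\cong\mathrm{Hom}_\cC(A,B)$ (using $T_e\cong\mathrm{id}$), natural in both slots. For the other adjunction $\mathscr{F}\dashv Ind$ I would use that $G$ is finite, so $\bigoplus_g T_gA$ is simultaneously a product: the coinduction right adjoint of $\mathscr{F}$ has this same underlying object, and one checks its equivariant structure coincides with $\tau^A$ (equivalently, projecting a morphism $(B,\varphi)\to Ind(A)$ onto its $e$-component gives the bijection with $\mathrm{Hom}_\cC(B,A)$). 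This is the usual ambidexterity of $\mathscr{F}$ for a finite group, and it is important here that the \emph{same} functor $Ind$ serves on both sides.

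Finally, for the direct-summand claim I would compute the unit $\eta\colon(A,\varphi)\to Ind(\mathscr{F}(A,\varphi))$ of $\mathscr{F}\dashv Ind$ and the counit $\epsilon\colon Ind(\mathscr{F}(A,\varphi))\to(A,\varphi)$ of $Ind\dashv\mathscr{F}$. On underlying $\cC$-morphisms these are the maps with $g$-components $\varphi_g^{-1}\colon A\to T_gA$ and $\varphi_g\colon T_gA\to A$ respectively; that each is $G$-equivariant is exactly the cocycle identity $\varphi_{hk}\circ\gamma^A_{h,k}=\varphi_h\circ T_h(\varphi_k)$ built into $\varphi$. Hence $\epsilon\circ\eta=\sum_g\varphi_g\circ\varphi_g^{-1}=|G|\cdot\mathrm{id}_{(A,\varphi)}$, and since $\mathrm{char}\,\mathbb{K}=0$ the scalar $|G|$ lies in $\mathbb{K}^\times$, so $(|G|^{-1}\epsilon)\circ\eta=\mathrm{id}$ exhibits $(A,\varphi)$ as a retract, hence — $\cC$ being idempotent complete, so $\cC^G$ is too — a genuine direct summand of $Ind(A)$. (Algebraic closedness of $\mathbb{K}$ plays no role in this lemma; only $|G|\in\mathbb{K}^\times$ is used.)

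The main difficulty is organizational rather than conceptual: every "commutes with the equivariant structure" assertion above is a diagram chase through the coherence data $\{\gamma_{g,h}\}$, so the bookkeeping must be done with care. The one genuinely substantive point is making sure the two adjunctions are implemented by one and the same $Ind$, so that $Ind$ is honestly bi-adjoint to $\mathscr{F}$ rather than merely left- and right-adjoint to it up to isomorphism — and this is precisely where finiteness of $G$ enters.
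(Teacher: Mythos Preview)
The paper does not prove this lemma itself; it is quoted from \cite{drinfeld2010braided} without proof. Your argument is correct and is the standard one: the coherence of $\tau^A$ reduces to the associativity constraint on $\{\gamma_{g,h}\}$, the two adjunctions are the usual induction/coinduction which coincide because $G$ is finite (so $\bigoplus_g = \prod_g$), and the retraction comes from $\epsilon\circ\eta = |G|\cdot\mathrm{id}$ together with $|G|\in\mathbb{K}^\times$. Your side remark that algebraic closedness is irrelevant here is also correct --- only invertibility of $|G|$ matters for this particular statement.
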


For any extriangulated category $\cE$ with a finite $G$-action, the equivariantization $\cE^G$ may or may not has an induced extriangulated category structure. Also, as our following results depend on Lemma \ref{drinfeld}, we would love $\cE$ to be idempotent complete. We would love $\cE$ to be idempotent complete and its equivariantization $\cE^G$ carries a naturally induced extriangulated category structure.

\begin{ex} Examples of extriangulated categories with the above properties include

\begin{enumerate}

\item (Section 4 \cite{chen2015monadicity}) All abelian categories. Indeed, the equivariantization category is abelian, where a sequence is exact if and only if the corresponding sequence of the underlying objects in the original abelian category is exact. In particular, the two natural functors $Ind$ and $\mathscr{F}$ in Lemma \ref{drinfeld} are both exact.

\item (Section 6 \cite{huang2023group}) If a triangulated category $\cT$ is $R$-linear for some ring $R$ where $|G|\neq 0$ in $R$, and $\cT$ has a DG enhancement, then $\cT^G$ has a natural triangulated category structure whose triangles are distinguished if and only if it's distinguished under the forgetful functor $\mathscr{F}$ described above. Note that this also forces $\cT$ to be idempotent complete (see for example Section 3 \cite{canonaco2022uniqueness}) and the two natural functors $Ind$ and $\mathscr{F}$ in Lemma \ref{drinfeld} are both triangulated.

\end{enumerate}
\end{ex}

From now on, we fix $\cE$ to be an idempotent complete extriangulated category such that \begin{itemize} \item the equivariantization $\cE^G$ carries a naturally induced extriangulated category structure; \item Lemma \ref{drinfeld} holds for $\cE$ and the two functors $Ind$, $\mathscr{F}$ are exact. \end{itemize} although some of the following results hold for more general extriangulated categories. In particular, if $X$ is a scheme of finite type over a field, the abelian category of coherent sheaves $Coh(X)$ (Example 2.13 \cite{sun2019note}) and triangulated category of perfect complexes $Perf(X)$ (Example 6.4 \cite{huang2023group}) are examples of such categories.

It is straightforward that if $\cN\in as(\cE)$ (resp. $Th(\cE)$), then $T_g(\cN):=\{T_g N: N\in\cN \}$ for any $g\in G$ is also a subcategory in $as(\cE)$ (resp. $Th(\cE)$), because $T_g$ is exact and $\gamma_{g,g^{-1}}: T_gT_{g^{-1}}\xra{\cong}T_e$ is naturally isomorphic to the identity functor as mentioned before.

\begin{defn} A subcategory $\cN$ is \textbf{$G$-invariant} if $T_g(\cN)\subseteq\cN$ for all $g\in G$. Denote the collection of $G$-invariant subcategories in $as(\cE)$ and $Th(\cE)$ respectively by $as_G(\cE)$ and $Th_G(\cE)$. \end{defn}

If $\cN$ is $G$-invariant, then we have $\cN=T_e(\cN)=(T_gT_{g^{-1}})(\cN)=T_g(T_{g^{-1}}(\cN))\subseteq T_g(\cN)\subseteq\cN$ so in fact $T_g(\cN)=\cN$. For subcategories $\cI\in as(\cE)$ and $\cJ\in as(\cE^G)$, define \[\cI^G:=\{(E,\varphi)\in\cE^G: E\in\cI\}\] \[\cJ\cap\cE:=\{E\in\cE: Ind(E)\in\cJ\} \] It is obvious that the above functors $(-)^G$ and $-\cap\cE$ are order-preserving.

\begin{lem} \label{a1} Suppose $\cI\in as(\cE)$ and $\cJ\in as(\cE^G)$, then

\begin{enumerate}

\item $\cI^G\in as(\cE^G)$ and $\cJ\cap\cE\in as(\cE)$. In addition, $\cI^G\in Th(\cE^G)$ if $\cI\in Th(\cE)$, and $\cJ\cap\cE\in Th(\cE)$ if $\cJ\in Th(\cE^G)$.
\item $\cI^G\cap\cE\subseteq\cI$ and $(\cJ\cap\cE)^G\subseteq\cJ$.

\end{enumerate}
\end{lem}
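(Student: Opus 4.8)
The plan is to verify each containment or membership by unwinding the definitions of $(-)^G$ and $-\cap\cE$ and exploiting the adjoint triple $Ind\tl\mathscr{F}\tl Ind$ from Lemma \ref{drinfeld}, together with the exactness of $Ind$ and $\mathscr{F}$. For part (1), I would first check that $\cI^G$ lies in $as(\cE^G)$: since $\cI$ contains $0$ and is closed under finite direct sums and summands, and the forgetful functor $\mathscr{F}$ (which sends $(E,\varphi)\mapsto E$) is additive and reflects direct sums and summands on the level of underlying objects, the subcategory $\cI^G=\mathscr{F}^{-1}(\cI)\cap\cE^G$ inherits these closure properties directly; one small point to note is that the zero equivariant object has underlying object $0\in\cI$. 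For $\cJ\cap\cE=Ind^{-1}(\cJ)$, the same kind of argument applies, using that $Ind$ is additive, so $Ind(E\oplus F)\cong Ind(E)\oplus Ind(F)$ and $Ind(E)$ is a summand of $Ind(E\oplus F)$; since $\cJ$ is closed under summands and finite sums, $\cJ\cap\cE\in as(\cE)$. For the thickness claims, I would use that $\mathscr{F}$ and $Ind$ are both exact (this is part of our standing hypotheses on $\cE$): a conflation $L\to M\to N$ in $\cE^G$ has underlying conflation $\mathscr{F}L\to\mathscr{F}M\to\mathscr{F}N$ in $\cE$, so the 2-out-of-3 property for $\cI$ transfers to $\cI^G$; dually, a conflation $L\to M\to N$ in $\cE$ is sent by $Ind$ to a conflation in $\cE^G$, so 2-out-of-3 for $\cJ$ gives 2-out-of-3 for $\cJ\cap\cE$. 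Closure under summands for the thick case is already covered by the $as$ discussion.

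For part (2), the inclusion $\cI^G\cap\cE\subseteq\cI$ is the statement that if $E\in\cE$ satisfies $Ind(E)\in\cI^G$, then $E\in\cI$. Now $Ind(E)=(\bigoplus_g T_gE,\tau^E)$, and $Ind(E)\in\cI^G$ means its underlying object $\bigoplus_g T_gE$ lies in $\cI$; since $\cI$ is closed under direct summands and $E\cong T_eE$ is a summand of $\bigoplus_g T_gE$, we conclude $E\in\cI$. The inclusion $(\cJ\cap\cE)^G\subseteq\cJ$ says: if $(A,\varphi)\in\cE^G$ has underlying object $A$ with $Ind(A)\in\cJ$, then $(A,\varphi)\in\cJ$. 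This is exactly where Lemma \ref{drinfeld} does the real work: that lemma tells us every object $(A,\varphi)\in\cE^G$ is a direct summand of $Ind(A)$. Hence $Ind(A)\in\cJ$ together with closure of $\cJ$ under direct summands forces $(A,\varphi)\in\cJ$.

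The one genuine obstacle is the last inclusion $(\cJ\cap\cE)^G\subseteq\cJ$, and more precisely making sure we are entitled to invoke Lemma \ref{drinfeld} here: that lemma is stated for idempotent complete $\mathbb{K}$-linear categories over an algebraically closed field of characteristic zero, which is why $\cE$ was fixed to be idempotent complete with $Ind,\mathscr{F}$ exact and Lemma \ref{drinfeld} available as a standing hypothesis at the start of this subsection. Given that hypothesis, the proof is just the summand argument above. Everything else is a routine diagram-free check that closure properties are preserved under functors that are additive (for the $as$ statements) or exact (for the $Th$ statements) and that reflect or create the relevant structure on underlying objects. I would present part (1) first, then part (2), since part (2) uses the conclusion of part (1) implicitly (we need $\cI^G, \cJ\cap\cE$ to be genuine subcategories before asserting inclusions among them).
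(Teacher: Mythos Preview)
Your proposal is correct and follows essentially the same approach as the paper: part (1) is handled by pulling back closure properties along the additive/exact functors $\mathscr{F}$ and $Ind$, and part (2) uses exactly the two arguments you give---the summand $E\cong T_eE$ of $\bigoplus_g T_gE$ for $\cI^G\cap\cE\subseteq\cI$, and Lemma \ref{drinfeld} (every $(A,\varphi)$ is a summand of $Ind(A)$) for $(\cJ\cap\cE)^G\subseteq\cJ$. The paper's proof is terser but identical in substance.
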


\begin{proof} Part $(1)$ is straightforward since $\mathscr{F}$ and $Ind$ are exact functors. For (2), we have \[\cI^G\cap\cE=\{(E,\varphi)\in\cE^G: E\in\cI\}\cap\cE=\{F\in\cE: \mathscr{F}(Ind(F))\in\cI\}\] For any $F\in \cI^G\cap\cT$, that is, $\oplus_g T_gF\in\cI$, then $T_gF\in\cI$ because $\cI$ is closed under direct summands. Choose $g=e$ we get $F=T_eF\in\cI$. Now, consider \[(\cJ\cap\cE)^G=\{(F,\varphi)\in\cE^G: F\in\{E\in\cT: Ind(E)\in\cJ\}\}=\{(E,\varphi)\in\cE^G: Ind(E)\in\cJ\}\] For any $(E,\varphi)\in (\cJ\cap\cE)^G$, it's an object in $\cE^G$, hence a direct summand of $Ind(E)\in\cJ$ by Lemma \ref{drinfeld}, so $(E,\varphi)\in\cJ$ because $\cJ$ is closed under direct summands. \end{proof}

\begin{defn} A subcategory $\cJ\in as(\cE^G)$ is \textbf{closed under Ind} or \textbf{Ind-closed} if $Ind(E)\in\cJ$ for any $(E,\varphi)\in\cJ$. Denote the collection of such subcategories by $as_{Ind}(\cE^G)$. Similarly, we use $Th_{Ind}(\cE^G)$ to denote the collection of Ind-closed thick subcategories. \end{defn}

\begin{prop} \label{a2} Let $G$ be a finite group acting on $\cE$. Suppose $\cI\in as_G(\cE)$ and $\cJ\in as_{Ind}(\cE^G)$. Then \begin{enumerate} \item $\cI^G\in as_{Ind}(\cE^G)$ and $\cJ\cap\cE\in as_G(\cE)$. In addition, $\cI^G\in Th_{Ind}(\cE^G)$ if $\cI\in Th_G(\cE)$, and $\cJ\cap\cE\in Th_G(\cE)$ if $\cJ\in Th_{Ind}(\cE^G)$. \item $\cI^G\cap\cE=\cI$ and $(\cJ\cap\cE)^G=\cJ$. \end{enumerate} \end{prop}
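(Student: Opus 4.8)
The plan is to get as much as possible for free from Lemma \ref{a1} and supply only the two genuinely new inputs: the $G$-invariance/$Ind$-closedness ``decorations'' in part (1), and the reverse inclusions in part (2) (recall Lemma \ref{a1}(2) already gives $\cI^G\cap\cE\subseteq\cI$ and $(\cJ\cap\cE)^G\subseteq\cJ$). In every step the two levers are the finiteness of $G$ and the closure of subcategories in $as(\cE)$, $as(\cE^G)$ under finite direct sums and direct summands.

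For part (1): by Lemma \ref{a1}(1), $\cI^G\in as(\cE^G)$ (and in $Th(\cE^G)$ if $\cI\in Th(\cE)$) and $\cJ\cap\cE\in as(\cE)$ (and in $Th(\cE)$ if $\cJ\in Th(\cE^G)$), so only the $Ind$-closedness of $\cI^G$ and the $G$-invariance of $\cJ\cap\cE$ remain. For the first, if $(E,\varphi)\in\cI^G$, i.e.\ $E\in\cI$, then the underlying object of $Ind(E)$ is $\bigoplus_{g\in G}T_gE$; since $\cI$ is $G$-invariant we have $T_gE\in\cI$ for each $g$, and since $G$ is finite and $\cI$ is closed under finite direct sums, $\bigoplus_{g\in G}T_gE\in\cI$, so $Ind(E)\in\cI^G$. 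For the second I would invoke the canonical isomorphism $Ind\circ T_h\cong Ind$ of functors $\cE\to\cE^G$, for each $h\in G$: the map $\bigoplus_{g\in G}\gamma^E_{g,h}\colon\bigoplus_g T_gT_hE\xrightarrow{\ \cong\ }\bigoplus_g T_{gh}E$, followed by reindexing the summands along the bijection $g\mapsto gh$, identifies the underlying object of $Ind(T_hE)$ with that of $Ind(E)$, and the associativity square $\gamma_{g,hk}\circ T_g\gamma_{h,k}=\gamma_{gh,k}\circ\gamma_{g,h}T_k$ for the coherence data shows this identification carries $\tau^{T_hE}$ to $\tau^E$. Hence if $E\in\cJ\cap\cE$, i.e.\ $Ind(E)\in\cJ$, then $Ind(T_hE)\cong Ind(E)\in\cJ$ and, $\cJ$ being closed under isomorphism (it is closed under direct summands), $Ind(T_hE)\in\cJ$, so $T_hE\in\cJ\cap\cE$; this holds for all $h$, giving $\cJ\cap\cE\in as_G(\cE)$. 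The thick versions follow by combining these arguments (which never used thickness) with the thick halves of Lemma \ref{a1}(1).

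For part (2): using the description $\cI^G\cap\cE=\{F\in\cE:\bigoplus_{g\in G}T_gF\in\cI\}$ obtained in the proof of Lemma \ref{a1}, any $F\in\cI$ has $T_gF\in\cI$ for all $g$ by $G$-invariance, hence $\bigoplus_{g\in G}T_gF\in\cI$ since $G$ is finite, so $F\in\cI^G\cap\cE$; together with Lemma \ref{a1}(2) this gives $\cI^G\cap\cE=\cI$. Likewise, from the description $(\cJ\cap\cE)^G=\{(E,\varphi)\in\cE^G:Ind(E)\in\cJ\}$ in the proof of Lemma \ref{a1}, any $(E,\varphi)\in\cJ$ has $Ind(E)\in\cJ$ because $\cJ$ is $Ind$-closed, so $(E,\varphi)\in(\cJ\cap\cE)^G$; with Lemma \ref{a1}(2) this gives $(\cJ\cap\cE)^G=\cJ$. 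Note that $Ind$-closedness of $\cJ$ is used exactly in this last equality, and $G$-invariance of $\cI$ exactly in its companion.

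The one non-formal ingredient is the isomorphism $Ind(T_hE)\cong Ind(E)$ in $\cE^G$ --- that reindexing the summands of $\bigoplus_{g}T_gT_hE$ by right translation is compatible with the equivariant structure. This is a direct diagram chase on the associativity/coherence square of the action, and it is the step I expect to demand the most care; everything else is bookkeeping built on Lemma \ref{a1} and the finiteness of $G$.
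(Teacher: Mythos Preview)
Your proof is correct and follows essentially the same route as the paper's: both reduce to Lemma~\ref{a1} and then supply the $Ind$-closedness of $\cI^G$, the $G$-invariance of $\cJ\cap\cE$, and the two reverse inclusions using finiteness of $G$ and closure under finite direct sums/summands. The only difference is that where the paper dismisses $Ind(T_hE)\cong Ind(E)$ as ``obvious by construction,'' you actually verify it via the reindexing bijection and the associativity coherence square---your caution there is warranted but not strictly needed for the write-up.
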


\begin{proof} First of all, $\cI^G$ is $Ind$-closed because $(E,\varphi)\in\cI^G$ means $E\in\cI$, so $T_gE\in\cI$ because $\cI$ is $G$-invariant, and therefore $\bigoplus_g T_gE\in\cI$ since $\cI$ is closed under finite direct sums and $G$ is finite. Next, to show $\cJ\cap\cE$ is $G$-invariant, it's enough to show $E\in\cJ\cap\cE$ implies $T_gE\in\cJ\cap\cE$ for any $g\in G$, i.e., $Ind(E)\in\cJ$ implies $Ind(T_gE)\in\cJ$. This is obvious because $Ind(E)\cong Ind(T_gE)$ by construction.

For (2), we just need to show $\cI^G\cap\cE\supseteq\cI$ and $(\cJ\cap\cE)^G\supseteq\cJ$ because the remaining containments are given by Lemma \ref{a1}. Recall, as discussed in the proof of Lemma \ref{a1}, we have \[\cI^G\cap\cE=\{F\in\cE: \mathscr{F}(Ind(F))\in\cI\}\] \[(\cJ\cap\cE)^G=\{(E,\varphi)\in\cE^G: Ind(E)\in\cJ\}\] Pick any $F\in\cI$, we have $T_gF\in\cI$ as $\cI$ is $G$-invariant, hence $\mathscr{F}(Ind(F))=\oplus_g T_gF$ also belongs to $\cI$ because $\cI$ is closed under finite direct sums and $G$ is finite. Finally, take any $(E,\varphi)\in\cJ$, we have $Ind(E)\in\cJ$ because $\cJ$ is assumed to be $Ind$-closed. \end{proof}

To sum up, we have two commutative diagrams: \[\xymatrixcolsep{1pc}\xymatrix{ Th(\cE)\ar@/^/[rr]^{(-)^G} && Th(\cE^G) \ar@/^/[ll]^{-\cap\cE} \\  \\ Th_G(\cE)\ar@/^/[rr]^{(-)^G}\ar@{^{(}->}[uu] && Th_{Ind}(\cE^G)\ar@{^{(}->}[uu] \ar@/^/[ll]^{-\cap\cE} } \hp{0.5} \xymatrix{ as(\cE)\ar@/^/[rr]^{(-)^G} && as(\cE^G) \ar@/^/[ll]^{-\cap\cE} \\  \\ as_G(\cE)\ar@/^/[rr]^{(-)^G}\ar@{^{(}->}[uu] && as_{Ind}(\cE^G)\ar@{^{(}->}[uu] \ar@/^/[ll]^{-\cap\cE} }\] where the diagram on the left sits inside the one on the right, and the bottom maps give an order-preserving bijection between $Th_G(\cE)$, $Th_{Ind}(\cE^G)$ and $as_G(\cE)$, $as_{Ind}(\cE^G)$ respectively. Furthermore, we have the following characterization:

\begin{lem} \label{a3} Suppose $\cI\in as(\cE)$, then it is $G$-invariant if and only if $\cI^G\cap\cE=\cI$. Similarly, for any $\cJ\in as(\cE^G)$, it's $Ind$-closed if and only if $(\cJ\cap\cE)^G=\cJ$. \end{lem}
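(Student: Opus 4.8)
The plan is to deduce this from Proposition~\ref{a2} together with the explicit formulas for $\cI^G\cap\cE$ and $(\cJ\cap\cE)^G$ already recorded in the proof of Lemma~\ref{a1}. In both equivalences one implication is free: if $\cI$ is $G$-invariant then $\cI^G\cap\cE=\cI$ by Proposition~\ref{a2}~(2), and if $\cJ$ is $Ind$-closed then $(\cJ\cap\cE)^G=\cJ$, again by Proposition~\ref{a2}~(2). Note also that $\cI^G$ and $\cJ\cap\cE$ are legitimate objects of $as(\cE^G)$ and $as(\cE)$ for \emph{arbitrary} $\cI\in as(\cE)$ and $\cJ\in as(\cE^G)$ by Lemma~\ref{a1}~(1), so both composites in the statement make sense. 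Thus the only real content is the converse of each equivalence.

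For the first converse, I would recall that $\cI^G\cap\cE=\{F\in\cE:\mathscr{F}(Ind(F))\in\cI\}=\{F\in\cE:\bigoplus_{g}T_gF\in\cI\}$. Assuming this set equals $\cI$, take an arbitrary $F\in\cI$; then $\bigoplus_{g}T_gF\in\cI$, and since $\cI$ is closed under direct summands (as $\cI\in as(\cE)$), each summand $T_gF$ lies in $\cI$. Hence $T_g(\cI)\subseteq\cI$ for every $g\in G$, i.e.\ $\cI$ is $G$-invariant. Only the inclusion $\cI\subseteq\cI^G\cap\cE$ is used here.

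For the second converse, recall from the same proof that $(\cJ\cap\cE)^G=\{(E,\varphi)\in\cE^G:Ind(E)\in\cJ\}$. Assuming this equals $\cJ$, take an arbitrary $(E,\varphi)\in\cJ$; then $(E,\varphi)\in(\cJ\cap\cE)^G$, which by the displayed description says exactly $Ind(E)\in\cJ$. Since this holds for every object of $\cJ$, the subcategory $\cJ$ is $Ind$-closed. As before, only the inclusion $\cJ\subseteq(\cJ\cap\cE)^G$ is needed.

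There is no genuine obstacle here: the whole argument is an unwinding of definitions, and the only point worth being careful about is to feed the hypothesised equality into the computations of $\cI^G\cap\cE$ and $(\cJ\cap\cE)^G$ performed in Lemma~\ref{a1}, and then to invoke closure of $\cI$ under direct summands to pass from $\bigoplus_{g}T_gF\in\cI$ to $T_gF\in\cI$.
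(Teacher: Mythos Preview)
Your proof is correct and follows essentially the same approach as the paper: invoke Proposition~\ref{a2} for the forward implications, then for the converses unwind the formulas $\cI^G\cap\cE=\{F:\bigoplus_g T_gF\in\cI\}$ and $(\cJ\cap\cE)^G=\{(E,\varphi):Ind(E)\in\cJ\}$ and use closure under direct summands for the first. Your observation that only one inclusion of each hypothesised equality is actually needed is a nice refinement, but the argument is otherwise identical to the paper's.
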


\begin{proof}

Proposition \ref{a2} means $\cI$ is $G$-invariant implies $\cI^G\cap\cE=\cI$ and $\cJ$ is $Ind$-closed implies $(\cJ\cap\cE)^G=\cJ$ so we just need to prove the converses. Suppose $\cI^G\cap\cE=\cI$, for any $F\in\cI$, we have $F\in\cI^G\cap\cE$ so $\bigoplus_g T_gF\in\cI$, hence $T_gF\in\cI$ for any $g$ as $\cI$ is closed under direct summands. This means $T_g\cI\subseteq\cI$ so $\cI$ is $G$-invariant. Next, for any $(E,\varphi)\in\cJ$, by $\cJ=(\cJ\cap\cE)^G$ we get $Ind(E)\in\cJ$ so it's $Ind$-closed. \end{proof}

Hence, for any $\cJ\in as_{Ind}(\cE^G)$, we have $\cJ=(\cJ\cap\cE)^G$ and $\cJ\cap\cE\in as_G(\cE)$, that is

\begin{cor} Any element in $as_{Ind}(\cE^G)$ has the form $\cI^G$ for some $\cI\in as_G(\cE)$. \end{cor}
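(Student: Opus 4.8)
The plan is to deduce the corollary directly from the results just established, essentially as an unwinding of the bijection between $as_G(\cE)$ and $as_{Ind}(\cE^G)$ given by the pair of functors $(-)^G$ and $-\cap\cE$. Given $\cJ\in as_{Ind}(\cE^G)$, I would set $\cI:=\cJ\cap\cE$ and claim this $\cI$ witnesses the statement.

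First I would invoke Proposition \ref{a2}(1) to conclude that $\cI=\cJ\cap\cE$ lies in $as_G(\cE)$, so it is a legitimate choice of $G$-invariant subcategory. Then I would apply the second equality in Proposition \ref{a2}(2), namely $(\cJ\cap\cE)^G=\cJ$, which says precisely that $\cI^G=\cJ$. Combining these two observations, $\cJ=\cI^G$ with $\cI\in as_G(\cE)$, which is exactly the assertion. Alternatively, one can phrase the same argument through Lemma \ref{a3}: since $\cJ$ is $Ind$-closed, that lemma gives $(\cJ\cap\cE)^G=\cJ$, and again taking $\cI=\cJ\cap\cE$ finishes the proof.

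There is essentially no obstacle here; the corollary is a one-line consequence of Proposition \ref{a2}, recorded separately only for emphasis and ease of reference. If I wanted to spell things out slightly, the only point worth noting is that $\cJ\cap\cE$ is genuinely an object of $as(\cE)$ (so that $(\cI)^G$ makes sense), but this is already part of Lemma \ref{a1}(1) and is reaffirmed in Proposition \ref{a2}(1). Thus the proof reduces to citing $\cJ\cap\cE\in as_G(\cE)$ and $(\cJ\cap\cE)^G=\cJ$ from the preceding proposition.
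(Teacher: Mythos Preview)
Your proposal is correct and matches the paper's own argument essentially verbatim: the paper also takes $\cI=\cJ\cap\cE$, notes $\cJ\cap\cE\in as_G(\cE)$, and invokes $(\cJ\cap\cE)^G=\cJ$ (via Lemma \ref{a3} / Proposition \ref{a2}) to conclude.
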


For any subset $C\subseteq as_G(\cE)\subseteq as(\cE)$, define $C^G:=\{\cI^G: \cI\in C\}$ and we have $C^G\subseteq as_{Ind}(\cE^G)\subseteq as(\cE^G)$,

\begin{thm} \label{actionthm} Let $C$ be a subset of $as_G(\cE)$. Then

\begin{enumerate}

\item Let $E$ be an object in $\cE$, then $\cI\in supp_C(E)$ if and only if $\cI^G\in supp_{C^G}(Ind(E))$.
\item Let $(E,\varphi)$ be an object in $\cE^G$, then $\cI^G\in supp_{C^G}(E,\varphi)$ if and only if $\cI\in supp_C(E)$.

\item Let $D$ be a subset of $as(\cE^G)$ containing $C^G$. Then there is an immersion \[Esp_C(\cE)\cong Esp_{C^G}(\cE^G)\hookrightarrow Esp_D(\cE^G)\] This is a homeomorphism if and only if $D=C^G$.

\end{enumerate}
\end{thm}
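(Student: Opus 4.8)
The plan is to treat the three assertions in order, deducing (3) from (1) and (2). Parts (1) and (2) are essentially formal: recall that $supp_C(E)=\{\cP\in C: E\notin\cP\}$ is defined by non-membership, and that $\cI\in C\subseteq as_G(\cE)$ forces $\cI^G\in C^G$. For (2), the definition of $\cI^G$ says $(E,\varphi)\in\cI^G$ if and only if $E\in\cI$, so $\cI^G\in supp_{C^G}(E,\varphi)$, i.e.\ $(E,\varphi)\notin\cI^G$, is literally the same condition as $E\notin\cI$, i.e.\ $\cI\in supp_C(E)$. For (1), I would rewrite $\cI^G\in supp_{C^G}(Ind(E))$ as $Ind(E)\notin\cI^G$, hence as $E\notin\cI^G\cap\cE$ (since $\cI^G\cap\cE=\{F\in\cE: Ind(F)\in\cI^G\}$), and then invoke Proposition~\ref{a2}(2), which gives $\cI^G\cap\cE=\cI$ for $\cI\in as_G(\cE)$; so the condition becomes $E\notin\cI$, that is $\cI\in supp_C(E)$. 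Alternatively one can argue directly from $Ind(E)=(\bigoplus_g T_gE,\tau^E)$, using that $\cI$ is $G$-invariant, closed under finite direct sums, and closed under direct summands, exactly as in the proof of Lemma~\ref{a1}.

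For (3), the input is that Proposition~\ref{a2} and Lemma~\ref{a3} already furnish an order-preserving bijection $\cI\mapsto\cI^G$ from $C$ onto $C^G$, with inverse $\cJ\mapsto\cJ\cap\cE$ (injectivity because $\cI^G\cap\cE=\cI$, surjectivity by definition of $C^G$). The task is to promote this set bijection $\phi\colon Esp_C(\cE)\to Esp_{C^G}(\cE^G)$ to a homeomorphism. Here I would use that $\{supp_C(E)\}_{E\in\cE}$ is a closed basis of $Esp_C(\cE)$ and $\{supp_{C^G}(\cF)\}_{\cF\in\cE^G}$ a closed basis of $Esp_{C^G}(\cE^G)$ (Lemma~\ref{blem}). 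Part (1) shows $\phi(supp_C(E))=supp_{C^G}(Ind(E))$, so $\phi$ carries every basic closed set to a closed set; part (2) shows $\phi^{-1}(supp_{C^G}(E,\varphi))=supp_C(E)$, so $\phi^{-1}$ does as well. Hence $\phi$ and $\phi^{-1}$ are both continuous and $\phi$ is a homeomorphism.

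Finally, since $C^G\subseteq D$, Lemma~\ref{two} supplies the immersion (subspace embedding) $Esp_{C^G}(\cE^G)\hookrightarrow Esp_D(\cE^G)$, and composing with $\phi$ yields the asserted immersion $Esp_C(\cE)\cong Esp_{C^G}(\cE^G)\hookrightarrow Esp_D(\cE^G)$. For the last clause: the composite is surjective exactly when the second map is, i.e.\ exactly when $C^G=D$ as subsets of $as(\cE^G)$, and a surjective immersion is a homeomorphism while a homeomorphism is surjective. I do not anticipate a genuine obstacle, since the categorical content has been front-loaded into Proposition~\ref{a2}; the only point needing a little care is the bookkeeping in (3)---checking that $\phi$ \emph{and} its inverse send the distinguished closed bases to closed sets, using (1) in one direction and (2) in the other, and keeping in mind that these topologies are specified by closed rather than open bases.
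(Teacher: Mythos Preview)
Your proposal is correct and follows essentially the same approach as the paper: parts (1) and (2) are unwound from the definitions of $supp_C$, $\cI^G$, and $\cI^G\cap\cE$ (with Proposition~\ref{a2}(2) supplying $\cI^G\cap\cE=\cI$), and part (3) uses the bijection from Proposition~\ref{a2} together with (1) and (2) to verify continuity in both directions, then invokes Lemma~\ref{two} for the immersion into $Esp_D(\cE^G)$. Your write-up is in fact a bit more explicit than the paper's about how (1) and (2) translate into continuity via the closed bases, but the argument is the same.
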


\begin{proof}

\begin{enumerate}

\item $\cI\in supp_C(E)$ means $E\notin\cI$, then $Ind(E)\notin\cI^G$ so $\cI^G\in supp_{C^G}(Ind(E))$. If not, $Ind(E)=(\bigoplus_g T_gE,\tau^E)\in\cI^G$ means $\bigoplus_g T_gE\in\cI$ and this implies $E\cong T_eE\in\cI$, a contradiction. Conversely, if we have $E\in\cI=\cI^G\cap\cE$, by definition this means $Ind(E)\in\cI^G$, this is also a contradiction.

\item Suppose $\cI\in supp_C(E)$, so $E\notin\cI$ and this gives $(E,\varphi)\notin\cI^G$ since $(E,\varphi)\in\cI^G$ by definition means $E\in\cI$. Conversely, $(E,\varphi)\notin\cI^G$ gives $E\notin\cI$. If not, $E\in\cI$ gives $(E,\varphi)\in\cI^G$.

\item For the homeomorphism, it's enough to show the two maps of the bijection in Lemma \ref{a2} are continuous and this is (1) and (2) above. The immersion is given by Lemma \ref{two}. \qedhere

\end{enumerate}
\end{proof}

Suppose $\cE=\cK$ is in fact a monoidal triangulated category in the sense of \cite{huang2023group} and the $G$-action on it is monoidal (i.e., the exact equivalences $T_g$ also preserve the monoidal product). Choose $C$ to be the subset of $G$-invariant nc-primes in $\cK$, then Proposition 6.5 \cite{huang2023group} tells us that $C^G$ is the subset of all nc-primes in $\cK^G$, hence there is a homeomorphism $Esp_{G-nc}(\cK)\cong Esp_{nc}(\cK^G)$. This shows the above Theorem is a generalization of the result in \cite{huang2023group} and of course reduces to

\begin{prop} \label{equibalmer} Let $\cT$ be a tensor triangulated category equipped with a monoidal group action by a finite group $G$ such that $\cT^G$ has the canonical triangulated category structure. Then there is a homeomorphism $BSpc(\cT^G)\cong BSpc_G(\cT)$. \end{prop}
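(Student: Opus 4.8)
The plan is to derive Proposition \ref{equibalmer} as the special case of Theorem \ref{actionthm} in which $\cE=\cT$ is a tensor triangulated category with a monoidal $G$-action and $C$ is chosen to be the collection of \emph{all} prime tt-ideals of $\cT$ that happen to be $G$-invariant. The first thing I would do is observe that, since the action is monoidal, a prime tt-ideal $\cP\subseteq\cT$ has a $G$-invariant shadow: $T_g(\cP)$ is again a prime tt-ideal for every $g\in G$, so the $G$-invariant prime tt-ideals form a genuine subset $C\subseteq as_G(\cE)$ (indeed $C\subseteq Th_G(\cT)$). Then I would invoke the tensor-triangular version of Proposition 6.5 of \cite{huang2023group} (the statement already quoted in the text immediately before the Proposition) which identifies $C^G$ with the set of all prime tt-ideals of $\cT^G$; this is the key input and it is where the monoidality of the action and finiteness of $G$ are really used.

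With that identification in hand, the argument is essentially bookkeeping. By Theorem \ref{actionthm}(3) applied with $D=C^G$ we get a homeomorphism $Esp_C(\cE)\cong Esp_{C^G}(\cE^G)$. Unwinding the definitions, $Esp_C(\cT)$ with $C$ the $G$-invariant prime tt-ideals is precisely what one calls $BSpc_G(\cT)$ (the topology being the restriction of the Balmer/Zariski topology, which matches the $C$-spectrum topology by Lemma \ref{blem}), and $Esp_{C^G}(\cT^G)$ is, via the identification $C^G=\{\text{prime tt-ideals of }\cT^G\}$, exactly $BSpc(\cT^G)$. Composing these identifications gives the desired homeomorphism $BSpc(\cT^G)\cong BSpc_G(\cT)$.

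The only genuine subtlety — and the step I would flag as the main obstacle — is checking that the topologies agree under these relabellings, i.e.\ that the $C$-spectrum topology of Definition on $Esp_C(\cT)$ restricts to the subspace topology that $BSpc_G(\cT)$ carries as a subset of $BSpc(\cT)$, and likewise on the $\cT^G$ side. This is not hard but must be said: the closed sets of $Esp_C(\cT)$ are the $Z_C(\cX)=\{\cP\in C:\cP\cap\cX=\emptyset\}$, which are exactly the intersections with $C$ of the Balmer-closed sets $\{\cP:\cP\cap\cX=\emptyset\}$, so the two topologies coincide; one then quotes the classical fact that $BSpc(\cT^G)$ (with its usual topology) is $Esp_{\{\text{prime tt-ideals}\}}(\cT^G)$ to close the loop. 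Once topology-matching is dispatched, everything else is a direct appeal to Theorem \ref{actionthm}(3) and the cited Proposition of \cite{huang2023group}.

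\begin{proof}
Since the $G$-action is monoidal, each $T_g$ sends prime tt-ideals to prime tt-ideals, so the collection $C$ of $G$-invariant prime tt-ideals of $\cT$ is a subset of $Th_G(\cT)\subseteq as_G(\cT)$, and by definition $Esp_C(\cT)=BSpc_G(\cT)$: the closed subsets $Z_C(\cX)$ are precisely the traces on $C$ of the closed subsets of the Balmer spectrum $BSpc(\cT)$, so the $C$-spectrum topology coincides with the subspace topology. By Proposition 6.5 of \cite{huang2023group}, applied here in the tensor triangulated setting, $C^G=\{\cP^G:\cP\in C\}$ is exactly the set of all prime tt-ideals of $\cT^G$, and the same comparison of closed sets shows $Esp_{C^G}(\cT^G)=BSpc(\cT^G)$ as topological spaces. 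Now apply Theorem \ref{actionthm}(3) with $D=C^G$: parts (1) and (2) of that Theorem give that the mutually inverse bijections $(-)^G$ and $-\cap\cT$ of Proposition \ref{a2} are continuous, hence $Esp_C(\cT)\cong Esp_{C^G}(\cT^G)$ is a homeomorphism. Combining the identifications yields $BSpc_G(\cT)\cong BSpc(\cT^G)$.
\end{proof}
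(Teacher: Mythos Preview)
Your proposal is correct and follows essentially the same route as the paper: the paper derives the Proposition as the tensor-triangulated special case of the monoidal discussion immediately preceding it, namely by taking $C$ to be the $G$-invariant (nc-)primes, invoking Proposition~6.5 of \cite{huang2023group} to identify $C^G$ with all primes of $\cT^G$, and then applying Theorem~\ref{actionthm}(3). Your extra paragraph on topology-matching is more explicit than the paper, which leaves that point implicit in the $Esp_C$ formalism, but the underlying argument is the same.
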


The key to prove Proposition 6.5 \cite{huang2023group} is:

\begin{lem} (Lemma 6.1 \cite{huang2023group}) \label{lem61} Let $\cC$ be a monoidal category equipped with a monoidal group action by a finite group $G$. Then $Ind(A\otimes\mathscr{F}(B,\varphi))\cong Ind(A)\otimes (B,\varphi)$, for any object $A\in\cC$ and $(B,\varphi)\in\cC^G$. \end{lem}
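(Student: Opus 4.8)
The plan is to prove Lemma \ref{lem61} by an explicit computation using the definitions of $Ind$, $\mathscr{F}$, and the monoidal structure on $\cC^G$. First I would unwind the left-hand side: $\mathscr{F}(B,\varphi) = B$ as an object of $\cC$, so $A \otimes \mathscr{F}(B,\varphi) = A \otimes B$ in $\cC$, and therefore $Ind(A \otimes \mathscr{F}(B,\varphi)) = \bigl(\bigoplus_g T_g(A \otimes B), \tau^{A \otimes B}\bigr)$ with the standard equivariant structure. Next I would unwind the right-hand side: $Ind(A) = \bigl(\bigoplus_g T_g A, \tau^A\bigr)$, and tensoring this with $(B,\varphi)$ in $\cC^G$ gives the object $\bigl(\bigoplus_g T_g A\bigr) \otimes B$ equipped with the equivariant structure coming from $\tau^A$ on the first factor and $\varphi$ on the second. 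Since $G$ is finite, the tensor product distributes over the finite direct sum, so both underlying objects are $\bigoplus_g (T_g A \otimes B)$ up to canonical isomorphism.

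The main content is then to produce a $G$-equivariant isomorphism between these two equivariant objects. The natural candidate is built from the monoidal constraint of the action equivalences: for each $g$, the exact monoidal equivalence $T_g$ provides a canonical isomorphism $T_g(A \otimes B) \cong T_g A \otimes T_g B$, and then $\varphi_g \colon T_g B \xrightarrow{\cong} B$ transports the second factor, yielding $T_g(A \otimes B) \cong T_g A \otimes B$. Assembling these over all $g \in G$ gives an isomorphism $\bigoplus_g T_g(A \otimes B) \xrightarrow{\cong} \bigoplus_g (T_g A \otimes B)$ of underlying objects. I would then check that this map intertwines the two equivariant structures: on the source the structure is $\tau^{A \otimes B}$, built from the action coherence isomorphisms $\gamma_{h, h^{-1}g}$; on the target it is the tensor of $\tau^A$ with $\varphi$. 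Verifying compatibility is a diagram chase combining the coherence pentagon for $\{\gamma_{g,h}\}$, the monoidality constraints of the $T_g$ (compatibility of the monoidal structure with the $\gamma$'s), and the equivariance axiom for $(B,\varphi)$ (the commuting square relating $\varphi_{gh}$, $\gamma_{g,h}^B$, and $T_g\varphi_h$).

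The hard part will be bookkeeping in that final diagram chase: one must carefully track which summand maps to which under the reindexing induced by left multiplication by $h$, and confirm that the three separate coherence conditions fit together without a sign or reindexing error. In practice this is routine once set up correctly — it is essentially the statement that $Ind$ is a map of module categories over $\cC^G$, or equivalently that $\mathscr{F}$ is lax (indeed strong) monoidal and $Ind$ its oplax-monoidal-compatible adjoint — so I would either spell out the chase summand by summand or, if brevity is preferred, cite that $Ind$ is naturally a $\cC^G$-module functor and deduce the projection formula $Ind(A \otimes \mathscr{F}(-)) \cong Ind(A) \otimes (-)$ as the standard consequence. Either way, no deep input is needed beyond the axioms of a monoidal group action already recorded above and Lemma \ref{drinfeld} for the descriptions of $Ind$ and $\mathscr{F}$.
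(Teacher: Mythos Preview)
The paper does not actually prove this lemma: it is quoted verbatim as Lemma 6.1 of \cite{huang2023group} and simply ``recorded here for later usage,'' with no argument given. So there is no in-paper proof to compare against. Your proposal is correct and is exactly the standard projection-formula computation one would expect: unwind both sides to $\bigoplus_g (T_gA\otimes B)$ via the monoidal constraint of $T_g$ together with $\varphi_g$, and then verify $G$-equivariance by the coherence square for $(B,\varphi)$ and the compatibility of the monoidal constraints with $\gamma_{g,h}$. Nothing is missing; the only caveat is that the monoidal structure on $\cC^G$ (which you use implicitly when writing $Ind(A)\otimes(B,\varphi)$) is not spelled out in this paper either, so in a self-contained write-up you would want to state it.
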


We record it here for later usage.

\subsection{Group action and Matsui spectrum for triangulated categories} Let $\cE=\cT$ be a triangulated category and $\cP$ a Matsui prime thick subcategory, that is, $\cP\subsetneq\cT$ is a thick subcategory such that the inclusion poset $\{\cN\in Th(\cT): \cP\subsetneq\cN\}$ has the smallest element. Denote the smallest element by $s(\cP)$ and we call a $G$-invariant Matsui prime simply \textbf{$G$-Matsui prime}.

\begin{lem} \label{sp} Suppose $\cP$ is a $G$-Matsui prime of $\cT$. Then $s(\cP)$ is also $G$-invariant, so it belongs to $Th_G(\cT)$. \end{lem}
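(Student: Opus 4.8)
The plan is to show $T_g(s(\cP))$ is again the smallest element of $\{\cN \in Th(\cT) : \cP \subsetneq \cN\}$ for each $g \in G$, and then conclude by uniqueness of the smallest element. First I would record the two facts that make this work: since $T_g$ is an exact equivalence, it carries $Th(\cT)$ bijectively onto itself and preserves strict inclusions, so $\cP \subsetneq \cN$ holds if and only if $T_g\cP \subsetneq T_g\cN$; and since $\cP$ is $G$-invariant we have $T_g\cP = \cP$ (this is the observation, recalled right after the definition of $G$-invariant, that $T_g(\cN) = \cN$ for $G$-invariant $\cN$). Consequently $T_g$ restricts to an order isomorphism of the poset $\{\cN \in Th(\cT) : \cP \subsetneq \cN\}$ onto itself.

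Next, an order isomorphism of a poset sends its smallest element (if one exists) to the smallest element of the image. Applying this to the restriction of $T_g$, and using that $\cP$ has a smallest element $s(\cP)$ over it by hypothesis, we get that $T_g(s(\cP))$ is the smallest element of $\{\cN \in Th(\cT) : \cP \subsetneq \cN\}$. But this smallest element is $s(\cP)$ by definition, so $T_g(s(\cP)) = s(\cP)$ for every $g \in G$. Thus $s(\cP) \in Th_G(\cT)$. Alternatively one can argue directly without invoking "order isomorphism": for any $\cN$ with $\cP \subsetneq \cN$, write $\cN = T_g(\cN')$ with $\cN' = T_{g^{-1}}(\cN)$; then $\cP = T_g\cP \subsetneq T_g\cN' = \cN$ forces $\cP = T_{g^{-1}}(T_g\cP) \subsetneq T_{g^{-1}}(\cN) = \cN'$, hence $s(\cP) \subseteq \cN'$ by minimality, and applying $T_g$ gives $T_g(s(\cP)) \subseteq \cN$; since also $\cP = T_g\cP \subsetneq T_g(s(\cP))$, this exhibits $T_g(s(\cP))$ as an element of the poset below every other element, i.e. the smallest one.

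There is no real obstacle here; the lemma is essentially a bookkeeping statement about equivalences preserving the subcategory lattice. The only point that needs a word of care is that $T_g$ preserves \emph{strict} inclusions and sends thick subcategories to thick subcategories — both of which are immediate from $T_g$ being an exact equivalence (so it has an exact quasi-inverse) together with the already-recorded fact that $T_g$ maps $Th(\cT)$ into $Th(\cT)$. I would keep the write-up to a few lines using the direct argument of the previous paragraph.
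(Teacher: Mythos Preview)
Your proposal is correct and follows essentially the same idea as the paper: use that $T_{g^{-1}}$ preserves strict inclusions of thick subcategories together with $T_{g^{-1}}(\cP)=\cP$ to see that $T_{g^{-1}}(s(\cP))$ lies in the poset $\{\cN\in Th(\cT):\cP\subsetneq\cN\}$, and then invoke minimality of $s(\cP)$. The paper's write-up is a touch shorter because it only checks the inclusion $s(\cP)\subseteq T_{g^{-1}}(s(\cP))$ (equivalently $T_g(s(\cP))\subseteq s(\cP)$), which already gives $G$-invariance by definition, whereas you go on to verify the full equality $T_g(s(\cP))=s(\cP)$ via uniqueness of the smallest element; but the substance is the same.
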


\begin{proof} $\cP\subsetneq s(\cP)$ implies $T_{g^{-1}}(\cP)\subsetneq T_{g^{-1}} (s(\cP))$ for any $g\in G$. This in turn gives $\cP=T_{g^{-1}}(\cP)\subsetneq T_{g^{-1}} (s(\cP))$ as $\cP$ is $G$-invariant. Note that $T_{g^{-1}} (s(\cP))$ is a thick subcategory so $s(\cP)\subseteq T_{g^{-1}}(s(\cP))$ by the minimality of $s(\cP)$. Hence $T_g(s(\cP))\subseteq s(\cP)$ and this means $s(\cP)$ is $G$-invariant. \end{proof}

\begin{lem} \label{gmat} Suppose thick subcategories in $\cT^G$ are all Ind-closed. Then $\cP^G$ is a Matsui prime in $\cT^G$ if $\cP$ is a $G$-Matsui prime of $\cT$. \end{lem}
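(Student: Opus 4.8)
I would argue that $s(\cP)^{G}$ is the smallest element of the poset $\{\cM\in Th(\cT^{G}):\cP^{G}\subsetneq\cM\}$. First, $\cP^{G}\in Th(\cT^{G})$ by Lemma~\ref{a1}, and it is proper: if $\cP^{G}=\cT^{G}$ then intersecting with $\cT$ and using Proposition~\ref{a2} (note $\cP$ is $G$-invariant since $\cP$ is a $G$-Matsui prime) gives $\cP=\cP^{G}\cap\cT=\cT^{G}\cap\cT=\cT$, a contradiction. So it remains to identify the smallest thick subcategory strictly containing $\cP^{G}$.

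Next I would check that $s(\cP)^{G}$ is actually a member of that poset. By Lemma~\ref{sp}, $s(\cP)$ is $G$-invariant, hence $s(\cP)^{G}\in Th(\cT^{G})$ by Lemma~\ref{a1}, and $\cP^{G}\subseteq s(\cP)^{G}$ since $(-)^{G}$ is order-preserving. The inclusion is strict: were $\cP^{G}=s(\cP)^{G}$, then applying $-\cap\cT$ and Proposition~\ref{a2} to the two $G$-invariant subcategories $\cP$ and $s(\cP)$ would yield $\cP=s(\cP)$, contradicting $\cP\subsetneq s(\cP)$.

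For minimality, take any $\cM\in Th(\cT^{G})$ with $\cP^{G}\subsetneq\cM$. By hypothesis $\cM$ is Ind-closed, so $\cM=(\cM\cap\cT)^{G}$ and $\cM\cap\cT\in Th_{G}(\cT)$ by Proposition~\ref{a2} (together with Lemma~\ref{a3}). Intersecting $\cP^{G}\subsetneq\cM$ with $\cT$ gives $\cP=\cP^{G}\cap\cT\subseteq\cM\cap\cT$, and this is strict: if $\cP=\cM\cap\cT$ then $\cP^{G}=(\cM\cap\cT)^{G}=\cM$ (Ind-closedness), a contradiction. Thus $\cM\cap\cT$ is a thick subcategory of $\cT$ strictly containing $\cP$, so by minimality of $s(\cP)$ we get $s(\cP)\subseteq\cM\cap\cT$, and applying $(-)^{G}$ yields $s(\cP)^{G}\subseteq(\cM\cap\cT)^{G}=\cM$. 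Hence $s(\cP)^{G}$ is the smallest element above $\cP^{G}$ and $\cP^{G}$ is a Matsui prime.

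The routine content is the thick/properness bookkeeping; the only place that genuinely uses the standing hypothesis is the step $\cM=(\cM\cap\cT)^{G}$, which is exactly what lets me pull an arbitrary thick subcategory $\cM\supsetneq\cP^{G}$ down to a $G$-invariant thick subcategory of $\cT$ and invoke minimality of $s(\cP)$; I expect tracking the strict inclusions through $(-)^{G}$ and $-\cap\cT$ to be the only subtle point, and it is handled each time by Proposition~\ref{a2} and Lemma~\ref{a3}.
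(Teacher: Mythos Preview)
Your proof is correct and follows essentially the same route as the paper: both identify $s(\cP)^{G}$ as the smallest thick subcategory strictly above $\cP^{G}$, using the bijection between $G$-invariant thicks and Ind-closed thicks (Proposition~\ref{a2}/Lemma~\ref{a3}) to transport the strict inclusions back and forth. Your version is slightly more explicit about properness and about invoking Lemma~\ref{sp} for the $G$-invariance of $s(\cP)$, but the argument is the same.
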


\begin{proof} Note that we have $\cP=\cP^G\cap\cT$ and $s(\cP)=s(\cP)^G\cap\cT$ because $\cP, s(\cP)$ are $G$-invariant. Suppose $\cJ\in Th(\cT^G)$ is a thick subcategory of $\cT^G$ such that $\cP^G\subsetneq\cJ$, we will show $s(\cP)^G$ is the smallest among such $\cJ$'s. First, it's obvious that $\cP^G\subsetneq s(\cP)^G$ as $\cP\subsetneq s(\cP)$ (otherwise we have $\cP=\cP^G\cap\cT=s(\cP)^G\cap\cT=s(\cP)$, a contradiction), so $s(\cP)^G$ is a thick subcategory in $\cT^G$ strictly containing $\cP^G$. Next, $\cP=\cP^G\cap\cT\subsetneq\cJ\cap\cT$ is a strict containment because $\cJ$ is Ind-closed (otherwise $\cP^G=(\cP^G\cap\cT)^G=(\cJ\cap\cT)^G=\cJ$, a contradiction). Therefore $s(\cP)\subseteq\cJ\cap\cT$ as $s(\cP)$ is the smallest such subcategories, hence $s(\cP)^G\subseteq (\cJ\cap\cT)^G\subseteq\cJ$. \end{proof}

\begin{rem} \label{ind} By Lemma \ref{ttt}, if $\cT$ is a closed tensor triangulated category generated by the tensor unit, then all thick subcategories are tt-ideals. Suppose $\cE=\cT$ is such a category and the group action is a monoidal action, one can show all thick subcategories (i.e., all tt-ideals) are Ind-closed by Lemma \ref{lem61} like in the proof of Proposition 6.5 \cite{huang2023group}. \end{rem}

Suppose $\cE=\cT$ is in fact trianglated and all thick subcategories in $\cT^G$ are Ind-closed, choose $C\subseteq Th_G(\cT)\subseteq as_G(\cT)$ to be the subset of $G$-Matsui primes in $\cT$. Then $C^G$ is contained in the subset of Matsui primes of $\cT^G$ (denote this by $D$) by Lemma \ref{gmat}. Apply Theorem \ref{actionthm} (3), we get

\begin{cor} \label{gmatimmer} Under the above hypotheses, we get an immersion \[MSpc_G(\cT)\bigg(:=Esp_C(\cT)\cong Esp_{C^G}(\cT^G)\bigg)\hookrightarrow MSpc(\cT^G)\] This is a homeomorphism if and only if all Matsui primes in $\cT^G$ is of the form $\cP^G$ for some $G$-Matsui prime $\cP$ in $\cT$. \end{cor}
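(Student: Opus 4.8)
The plan is to assemble this as a direct consequence of the machinery already built up in this section, specifically Theorem \ref{actionthm} (3) together with Lemma \ref{gmat}. The main setup has already been arranged: we are in the situation where $\cE = \cT$ is triangulated and all thick subcategories of $\cT^G$ are Ind-closed, and $C \subseteq Th_G(\cT)$ is the subset of $G$-Matsui primes of $\cT$. First I would record that $C^G$ is well-defined and lands inside $as_{Ind}(\cT^G)$, which is immediate from the general discussion preceding the corollary. Then the key input is Lemma \ref{gmat}: under the standing hypothesis that thick subcategories in $\cT^G$ are Ind-closed, every $\cP \in C$ (a $G$-Matsui prime) gives rise to a Matsui prime $\cP^G$ in $\cT^G$. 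Hence $C^G \subseteq D$, where $D := C_{Matsui}$ for $\cT^G$ is the collection of all Matsui primes of $\cT^G$, so by definition $Esp_D(\cT^G) = MSpc(\cT^G)$.

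Next I would apply Theorem \ref{actionthm} directly. Part (3) of that theorem, applied with this $C$ and this $D \supseteq C^G$, yields a homeomorphism $Esp_C(\cT) \cong Esp_{C^G}(\cT^G)$ and an immersion $Esp_{C^G}(\cT^G) \hookrightarrow Esp_D(\cT^G)$. Unwinding the notation, the left-hand side $Esp_C(\cT)$ is by definition $MSpc_G(\cT)$ and the right-hand side $Esp_D(\cT^G)$ is $MSpc(\cT^G)$, so composing gives exactly the claimed immersion
\[
MSpc_G(\cT) \cong Esp_{C^G}(\cT^G) \hookrightarrow MSpc(\cT^G).
\]

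Finally, for the homeomorphism criterion: the ``if and only if'' in Theorem \ref{actionthm} (3) says the immersion $Esp_{C^G}(\cT^G) \hookrightarrow Esp_D(\cT^G)$ is a homeomorphism precisely when $D = C^G$. Since $D$ is the set of all Matsui primes of $\cT^G$ and $C^G = \{\cP^G : \cP \text{ a } G\text{-Matsui prime of } \cT\}$, the condition $D = C^G$ is exactly the statement that every Matsui prime of $\cT^G$ has the form $\cP^G$ for some $G$-Matsui prime $\cP$ of $\cT$. (The containment $C^G \subseteq D$ is automatic by Lemma \ref{gmat}, so the content of the equality is the reverse inclusion.) This is all there is to it — the corollary is essentially a repackaging.

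I do not anticipate a real obstacle here, since every ingredient is already in place; the only thing to be careful about is bookkeeping, namely that the hypothesis ``all thick subcategories in $\cT^G$ are Ind-closed'' is precisely what Lemma \ref{gmat} needs, and that Remark \ref{ind} records when this holds (e.g. for a closed tensor triangulated category generated by the tensor unit with a monoidal $G$-action), which is what makes the corollary applicable in the intended examples.
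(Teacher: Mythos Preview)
Your proposal is correct and matches the paper's approach exactly: the paper derives the corollary directly from Theorem \ref{actionthm} (3) after observing via Lemma \ref{gmat} that $C^G \subseteq D$ (the Matsui primes of $\cT^G$), which is precisely your argument. The paper in fact gives no separate proof beyond the sentence preceding the corollary, so your write-up is simply a more explicit unpacking of the same logic.
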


\begin{ex} \label{exm} Let $G$ be a finite group acting on a Noetherian scheme $X$ over a field where $|G|\neq 0$ in the underlying field. This induces an action of $G$ on $Perf(X)$ and $Perf(X)^G\cong Perf(X/G)$, the category of perfect complexes on the quotient variety $X/G$ (e.g. Example 6.4 of \cite{huang2023group}). In addition, if $X$ is quasi-affine, $Perf(X)^G\cong Perf(X/G)$ is a triangulated category satisfies the hypotheses in Corollary \ref{gmatimmer} so there is an immersion $MSpc_G(Perf(X))\hookrightarrow MSpc(Perf(X/G))$. \end{ex}

\begin{rem} The immersion in Example \ref{exm} is in fact a homeomorphism and this is because the Matsui primes and the Balmer primes in this situation coincide. In fact, in the above situation, the Matsui spectrum is homeomorphic to the underlying scheme by Corollary 2.17 \cite{matsui2021prime}, and the underlying scheme is homeomorphic to the Balmer spectrum by result from tensor triangular geometry, so we get $MSpc_G(Perf(X))\cong BSpc_G(\cT) \cong BSpc(\cT^G)\cong MSpc(Perf(X/G))$ by Proposition \ref{equibalmer}. \end{rem}

\subsection{Group action and Serre spectrum for abelian categories} Let $\cE=\cA$ be an abelian category.

\begin{lem} Suppose $\cS\in as(\cA)$. Then \begin{enumerate} \item $\cS^G$ is a Serre subcategory of $\cA^G$, if $\cS$ is a Serre subcategory of $\cA$. \item $\cS$ is a Serre subcategory of $\cA$, if $\cS$ is $G$-invariant and $\cS^G$ is a Serre subcategory of $\cT^G$. \end{enumerate} Therefore, if $\cS$ is $G$-invariant, we have $\cS$ is Serre in $\cA$ if and only if $\cS^G$ is Serre in $\cA^G$.\end{lem}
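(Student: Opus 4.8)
The plan is to reduce everything to the characterization of the equivariantization $\cA^G$ of an abelian category: a sequence in $\cA^G$ is exact precisely when its image under the forgetful functor $\mathscr{F}\colon\cA^G\to\cA$ is exact (this is the content of the first item of the Example on abelian categories with good $G$-actions, citing Section 4 of \cite{chen2015monadicity}), and the two adjoints $Ind$ and $\mathscr{F}$ are both exact. Given these, I would first prove (1). Let $\cS$ be a Serre subcategory of $\cA$ and take a short exact sequence $0\to (L,\varphi_L)\to(M,\varphi_M)\to(N,\varphi_N)\to 0$ in $\cA^G$. Applying $\mathscr{F}$ gives a short exact sequence $0\to L\to M\to N\to 0$ in $\cA$, and now $M\in\cS$ iff $L,N\in\cS$ by the Serre property of $\cS$; by definition of $(-)^G$ this says exactly $(M,\varphi_M)\in\cS^G$ iff $(L,\varphi_L),(N,\varphi_N)\in\cS^G$. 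So $\cS^G$ has the Serre 2-out-of-3 property; it lies in $as(\cA^G)$ by Lemma \ref{a1}(1) (or by the same exactness argument directly), hence it is a Serre subcategory of $\cA^G$.

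Next I would prove (2). Assume $\cS$ is $G$-invariant and $\cS^G$ is Serre in $\cA^G$; I want $\cS$ Serre in $\cA$. Take a short exact sequence $0\to L\to M\to N\to 0$ in $\cA$. The functor $Ind$ is exact, so applying it produces a short exact sequence $0\to Ind(L)\to Ind(M)\to Ind(N)\to 0$ in $\cA^G$. By the Serre property of $\cS^G$, $Ind(M)\in\cS^G$ iff $Ind(L),Ind(N)\in\cS^G$. Now the point is to translate membership of $Ind(X)$ in $\cS^G$ back into membership of $X$ in $\cS$, and this is where $G$-invariance is used: unwinding, $Ind(X)=(\bigoplus_g T_gX,\tau^X)\in\cS^G$ means $\bigoplus_g T_gX\in\cS$; since $\cS$ is closed under direct summands this gives each $T_gX\in\cS$, and taking $g=e$ gives $X\in\cS$; conversely if $X\in\cS$ then $T_gX\in\cS$ for all $g$ by $G$-invariance, so the finite direct sum $\bigoplus_g T_gX$ lies in $\cS$ (here finiteness of $G$ and closure under finite sums is used), i.e. $Ind(X)\in\cS^G$. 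Thus for $X\in\{L,M,N\}$ we have $X\in\cS \Leftrightarrow Ind(X)\in\cS^G$, and combining with the displayed equivalence yields $M\in\cS$ iff $L,N\in\cS$. Hence $\cS$ is Serre in $\cA$. The final sentence of the statement is then just the conjunction of (1) and (2) in the $G$-invariant case (note (1) does not even need $G$-invariance).

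I do not expect any serious obstacle: the whole argument is a transport of the 2-out-of-3 property across the exact functors $\mathscr{F}$ and $Ind$, and the only subtlety is the bookkeeping in (2) identifying $Ind(X)\in\cS^G$ with $X\in\cS$, which is exactly the computation already carried out in the proofs of Lemma \ref{a1} and Lemma \ref{a3}; the one hypothesis that must be invoked carefully is $G$-invariance of $\cS$ (without it, $Ind(X)\in\cS^G$ for $X\in\cS$ can fail), together with finiteness of $G$. One could alternatively phrase (2) more slickly using Lemma \ref{a3}: since $\cS$ is $G$-invariant, $\cS=\cS^G\cap\cA$, and $-\cap\cA$ is (by Lemma \ref{a1}(1)) exact-friendly enough to carry the Serre property of $\cS^G\in Th_{Ind}(\cA^G)$ down to $\cS$; but the direct sequence-chasing argument above is cleaner to write out and self-contained.
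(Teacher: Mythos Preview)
Your proposal is correct and follows essentially the same route as the paper's proof: part (1) is verbatim the paper's argument via the forgetful functor $\mathscr{F}$, and part (2) is the paper's argument via $Ind$, with the only cosmetic difference being that you isolate the equivalence $X\in\cS \Leftrightarrow Ind(X)\in\cS^G$ as a standalone observation before applying it to $L,M,N$, whereas the paper weaves this computation directly into the two implications $A,C\in\cS\Rightarrow B\in\cS$ and $B\in\cS\Rightarrow A,C\in\cS$.
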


\begin{proof} \begin{enumerate} \item Suppose $0\xra{}(A,\varphi_A)\xra{}(B,\varphi_B)\xra{}(C,\varphi_C)\xra{}0$ is a short exact sequence in $\cA^G$, then by definition this means $0\xra{}A\xra{}B\xra{}C\xra{}0$ is a short exact sequence in $\cA$. Therefore, $(B,\varphi_B)\in\cS^G\Leftrightarrow B\in\cS \Leftrightarrow A,C\in\cS$ as $\cS$ is Serre in $\cA$, and the latter by definition means $(A,\varphi_A)$ and $(C,\varphi_C)$ are in $\cS^G$. \item Let $0\xra{}A\xra{}B\xra{}C\xra{}0$ be a short exact sequence in $\cA$. Suppose $A,C\in\cS=\cS^G\cap\cA$ (because $\cS$ is $G$-invariant) so in fact we have $\bigoplus_g T_gA$ and $\bigoplus_g T_gC\in\cS$, hence $Ind(A), Ind(C)\in\cS^G$, therefore $Ind(B)\in\cS^G$. This gives $\bigoplus_g T_gB\in\cS$ so $B\cong T_eB\in\cS$ as $\cS$ is closed under direct summands. Conversely, if $B\in\cS$, we have $\bigoplus_g T_gB\in\cS$ and this gives $Ind(B)\in\cS^G$. A similar argument as above shows $A,C\in\cS$. \qedhere \end{enumerate} \end{proof}

Now, choose $C\subseteq as_G(\cA)$ to be the subset of $G$-invariant Serre subcategories in $\cA$, by Lemma \ref{gmat}, $C^G$ is contained in the subset of Serre subcategories of $\cA^G$ (denote this by $D$). Then by Theorem \ref{actionthm} (3) we have

\begin{cor} \label{gserreimmer} Let $G$ be a finite group acting on an abelian category $\cA$. Then there is an immersion \[Esp_{Serre,G}(\cA)\bigg(:=Esp_C(\cA)\cong Esp_{C^G}(\cA^G)\bigg)\hookrightarrow Esp_{Serre}(\cA^G)\] \end{cor}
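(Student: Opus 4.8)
The final statement is Corollary \ref{gserreimmer}: given a finite group $G$ acting on an abelian category $\cA$, choosing $C$ to be the $G$-invariant Serre subcategories of $\cA$, there is an immersion $Esp_{Serre,G}(\cA) \cong Esp_{C^G}(\cA^G) \hookrightarrow Esp_{Serre}(\cA^G)$.

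The plan is to invoke Theorem \ref{actionthm}(3) directly. That theorem says: for any subset $C \subseteq as_G(\cE)$ and any subset $D \subseteq as(\cE^G)$ with $C^G \subseteq D$, there is an immersion $Esp_C(\cE) \cong Esp_{C^G}(\cE^G) \hookrightarrow Esp_D(\cE^G)$. So the entire task reduces to two verifications: (i) that $C$ (the $G$-invariant Serre subcategories of $\cA$) is actually a subset of $as_G(\cA)$, and (ii) that $C^G$ is contained in some natural subset $D$ of $as(\cA^G)$ — here $D$ being the collection of all Serre subcategories of $\cA^G$, so that $Esp_D(\cA^G) = Esp_{Serre}(\cA^G)$.

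For (i): a Serre subcategory of $\cA$ lies in $Serre(\cA) \subseteq Th(\cA) \subseteq as(\cA)$, and being $G$-invariant by hypothesis, it lies in $as_G(\cA)$. For (ii): this is exactly part (1) of the Lemma immediately preceding the corollary — if $\cS \in as(\cA)$ is a Serre subcategory of $\cA$, then $\cS^G$ is a Serre subcategory of $\cA^G$. Hence $C^G \subseteq Serre(\cA^G) =: D$, and the hypothesis $C^G \subseteq D$ of Theorem \ref{actionthm}(3) is met.

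There is essentially no obstacle here; the corollary is a formal consequence of the machinery already assembled. The only mild subtlety is keeping the notation straight — that the paper's ambient hypotheses on $\cE$ (idempotent complete, $\cE^G$ extriangulated, $Ind$ and $\mathscr F$ exact, Lemma \ref{drinfeld} applicable) are all satisfied when $\cE = \cA$ is abelian, which was recorded in the Example following Lemma \ref{drinfeld}. So one should note that before applying Theorem \ref{actionthm}. I would write it in two or three sentences.

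Here is the proof:

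\begin{proof} Recall that any abelian category falls under the standing hypotheses fixed before Lemma \ref{a1} (the equivariantization $\cA^G$ is abelian and the functors $Ind$, $\mathscr F$ are exact). A $G$-invariant Serre subcategory of $\cA$ lies in $Serre(\cA) \subseteq Th(\cA) \subseteq as(\cA)$ and is $G$-invariant, hence $C \subseteq as_G(\cA)$. By part (1) of the previous Lemma, $\cS^G$ is a Serre subcategory of $\cA^G$ for every $\cS \in C$, so $C^G \subseteq Serre(\cA^G) = D$. Now apply Theorem \ref{actionthm} (3) with these choices of $C$ and $D$: since $Esp_D(\cA^G) = Esp_{Serre}(\cA^G)$, we obtain the desired immersion $Esp_{Serre,G}(\cA) = Esp_C(\cA) \cong Esp_{C^G}(\cA^G) \hookrightarrow Esp_{Serre}(\cA^G)$. \end{proof}
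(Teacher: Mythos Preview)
Your proof is correct and follows the same route as the paper: set $C$ to be the $G$-invariant Serre subcategories, use the preceding lemma to get $C^G \subseteq Serre(\cA^G)$, and apply Theorem \ref{actionthm}(3). Your citation of ``part (1) of the previous Lemma'' is in fact more accurate than the paper's own reference (which points to Lemma \ref{gmat}, a statement about Matsui primes), and your explicit check that abelian categories meet the standing hypotheses is a welcome clarification.
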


\vp{0.2}

\section*{Acknowledgements}

\noindent This project is partially supported by the National Natural Science Foundation of China (Grant No.11901589), Guangdong Basic and Applied Basic Research Foundation (Grant No.2022A1515012176, No.2018A030313581) and Research Development Fund of Xi'an Jiaotong-Liverpool University (Grant No.RDF-22-02-094).

\vp{0.2}

\section*{Data availability statement}

\noindent This is a study in pure math. Data sharing not applicable to this article as no datasets were generated or analyzed during the current study.

\vp{0.2}

\bibliographystyle{plain}
\bibliography{extri}

\vp{0.1}

\noindent\emph{\footnotesize{Department of Foundational Mathematics, Xi'an Jiaotong-Liverpool University, Suzhou 215123 China, and School of Humanities and Fundamental Sciences, Shenzhen Institute of Information Technology, Shenzhen 518172 China}}

\smallskip
\smallskip
\smallskip

\noindent\emph{\footnotesize{E-mail address: xuan.yu@xjtlu.edu.cn, xuanyumath@outlook.com}}

\end{document}